\DeclareMathOperator{\divv}{div}
\DeclareMathOperator{\curl}{curl}
\begin{document}
\title{Entropy-bounded solutions to the 3D compressible heat-conducting
magnetohydrodynamic equations with vacuum at infinity
\thanks{
This research was partially supported by National Natural Science Foundation of China (Nos. 11901288, 11901474, 12071359), Scientific Research Foundation of Jilin Provincial Education Department (No. JJKH20210873KJ), Postdoctoral Science Foundation of
China (No. 2021M691219), and Exceptional Young Talents Project of Chongqing Talent (No. cstc2021ycjh-bgzxm0153).}
}
\author{Yang Liu$\,^{\rm 1, 2}\,$\quad Xin Zhong$\,^{\rm 3}\,${\thanks{Corresponding author. E-mail address: liuyang0405@ccsfu.edu.cn (Y. Liu), xzhong1014@amss.ac.cn (X. Zhong).}}
\date{}\\
\footnotesize $^{\rm 1}\,$
School of Mathematics, Jilin University, Changchun 130012, P. R. China\\
\footnotesize $^{\rm 2}\,$ College of Mathematics, Changchun Normal
University, Changchun 130032, P. R. China\\
\footnotesize $^{\rm 3}\,$ School of Mathematics and Statistics, Southwest University, Chongqing 400715, P. R. China} \maketitle
\newtheorem{theorem}{Theorem}[section]
\newtheorem{definition}{Definition}[section]
\newtheorem{lemma}{Lemma}[section]
\newtheorem{proposition}{Proposition}[section]
\newtheorem{corollary}{Corollary}[section]
\newtheorem{remark}{Remark}[section]
\renewcommand{\theequation}{\thesection.\arabic{equation}}
\catcode`@=11 \@addtoreset{equation}{section} \catcode`@=12
\maketitle{}

\begin{abstract}
The mathematical analysis on the behavior of the entropy for viscous, compressible, and heat conducting magnetohydrodynamic flows near the vacuum region is a challenging problem as the governing equation for entropy is highly degenerate and singular in the vacuum region. In particular, it is unknown whether the entropy remains its boundedness.
In the present paper, we investigate the Cauchy problem to the three-dimensional (3D) compressible heat-conducting magnetohydrodynamic equations with vacuum at infinity only. We show that the uniform boundedness of the entropy and the $L^2$ regularities of the velocity and temperature can be propagated provided that the initial density decays suitably slow at infinity. The main tools are based on singularly weighted energy estimates and De Giorgi type iteration techniques developed by Li and Xin (https://arxiv.org/abs/2111.14057) for the 3D full compressible Navier-Stokes system. Some new mathematical techniques and useful estimates are developed to deduce the lower and upper bounds on the entropy.
%The main ingredient of this paper is to overcome the strong coupling effects between the velocity and magnetic field.
\end{abstract}

\textit{Key words and phrases}. Compressible heat-conducting
magnetohydrodynamic equations; uniform boundedness of entropy;
 strong solutions; vacuum at infinity.

2020 \textit{Mathematics Subject Classification}. 35Q35; 76N10; 76W05.

%%%%%%%%%%%%%%%%%%%%%%%%%%%%%%%%%%%%%%%%%%%%%%%%%%%%%%%%%%%%%%%%%%%%%%%%%%%%%%%%%%%%%%%%%%%%%%%%%%

\section{Introduction}
The motion of a compressible, viscous, and heat conducting magnetohydrodynamic (MHD) flow in $\mathbb{R}^3$ can be described by the following equations (cf. \cite[Chapter 3]{LQ2012}):
\begin{align}\label{a1}
\begin{cases}
\rho_t+\divv(\rho u)=0,\\
\rho u_t+\rho u\cdot\nabla u-\mu\Delta u-(\lambda+\mu)\nabla\divv u+\nabla p
=b\cdot\nabla b-\frac12\nabla|b|^2,\\
c_v\rho(\theta_t+u\cdot\nabla\theta)+p\divv u-\kappa\Delta\theta=\mathcal{Q}(\nabla u)
+\nu|\curl b|^2,\\
b_t-b\cdot\nabla u+u\cdot\nabla b+b\divv u=\nu\Delta b,\\
\divv b=0,
\end{cases}
\end{align}
where the unknowns $\rho\ge 0$, $u\in\mathbb{R}^3$, $\theta\ge 0$, $p\ge 0$, and $b\in\mathbb{R}^3$ are
the density, velocity, pressure, absolute temperature, pressure, and magnetic field,
respectively. Here, $c_v$ is a positive constant, $\nu$ is the magnetic diffusive coefficient, and $\mu$ and $\lambda$ are the viscous coefficients
satisfying the physical constraints
\begin{align*}
\mu>0, \quad 2\mu+3\lambda\ge 0.
\end{align*}
$\kappa>0$ is the heat conductive coefficient, while
\begin{align*}
\mathcal{Q}(\nabla u)=\frac{\mu}{2}|\nabla u+(\nabla u)^\top|^2
+\lambda(\divv u)^2,
\end{align*}
with $(\nabla u)^\top$ being the transpose of $\nabla u$.

In order to close the system \eqref{a1}, one needs first to postulate a constitutive equation relating the pressure to the other variables.
We assume that the pressure $p$ takes the form
\begin{align*}
p=\widetilde{R}\rho\theta
\end{align*}
for a positive constant $\widetilde{R}$. Let $e=c_v\theta\ (c_v>0)$ be the specific internal energy and $s$ be the specific entropy. The specific entropy $s$, the specific internal energy $e$, and the pressure $p$ are interrelated through the second principle of thermodynamics
\begin{align}
\theta Ds=De+D\Big(\frac{1}{\rho}\Big)p,
\end{align}
where $D$ stands for the differential with respect to the variables $\rho$ and $\theta$, which implies some compatibility conditions between $e$ and $p$ ( Maxwell's relation).
Then it holds that
\begin{align*}
p=Ae^{\frac{s}{c_v}\rho^\gamma}
\end{align*}
for some positive constant $A$, where $\gamma=1+\frac{\widetilde{R}}{c_v}$. In terms of
$\rho$ and $\theta$, $s$ can be expressed as
\begin{align}\label{1.2}
s=c_v\Big(\log\frac{\widetilde{R}}{A}+\log\theta-(\gamma-1)\log\rho\Big),
\end{align}
and it will be more convenient to replace \eqref{a1}$_3$ by the
\textit{entropy balance}
\begin{align}\label{1.3}
\rho(s_t+u\cdot\nabla s)-\frac{\kappa}{c_v}\Delta s=\kappa(\gamma-1)\divv\Big(\frac{\nabla\rho}{\rho}\Big)
+\frac{1}{\theta}\Big(\mathcal Q(\nabla u)
+\nu|\curl b|^2+\kappa\frac{|\nabla\theta|^2}{\theta}\Big)
\end{align}
in the region where both $\rho$ and $\theta$ are positive.

When the temperature fluctuations can be neglected, \eqref{a1} becomes the compressible isentropic MHD system
\begin{align}\label{a3}
\begin{cases}
\rho_t+\divv(\rho u)=0,\\
\rho u_t+\rho u\cdot\nabla u-\mu\Delta u-(\lambda+\mu)\nabla\divv u+\nabla p
=b\cdot\nabla b-\frac12\nabla|b|^2,\\
b_t-b\cdot\nabla u+u\cdot\nabla b+b\divv u=\nu\Delta b,\\
\divv b=0,\ p(\rho)=A\rho^\gamma.
\end{cases}
\end{align}
Hu and Wang \cite{HW10} studied the global existence and large-time behavior of finite energy weak solutions to \eqref{a3} in a bounded domain $\Omega\subseteq\mathbb{R}^3$ with Dirichlet boundary condition $u|_{\partial\Omega}=b|_{\partial\Omega}=0$.
Motivated by \cite{H95,H06} where D. Hoff constructed the so-called \textit{intermediate weak solutions},
Suen and Hoff \cite{SH12} proved the global-in-time existence of weak solutions of \eqref{a3} in $\mathbb{R}^3$ with initial data small in $L^2$ and initial density positive and essentially bounded if
the viscosity coefficients $\mu$ and $\lambda$ fulfill the additional assumption
\begin{align}\label{a4}
\mu<2\mu+\lambda<\Big(\frac32+\frac{\sqrt{21}}{6}\Big)\mu.
\end{align}
This result was later improved by Liu-Yu-Zhang \cite{LYZ13} where the condition \eqref{a4} was removed and vacuum was allowed initially. Moreover, the uniqueness and continuous dependence of these weak solutions were established in \cite{S20} by modifying the initial conditions. Meanwhile, Wu-Zhang-Zou \cite{WZZ21} obtained the optimal time-decay rates of weak solutions with discontinuous initial data, i.e., for $t\geq1$,
\begin{align*}
\begin{cases}
\|(\rho-\tilde{\rho},u,b)(t)\|_{L^r}\leq Ct^{-\frac32(1-\frac1r)},\ 2\leq r\leq\infty,\\
\|(\nabla u,\nabla b)(t)\|_{L^2}\leq Ct^{-\frac54}, \\
\|(\nabla^2b,b_t)(t)\|_{L^2}\leq Ct^{-\frac74}, \\
\end{cases}
\end{align*}
if $\|(\rho_0-\tilde{\rho},u_0,b_0)\|_{L^1}$ is bounded. Apart from \textit{large-energy weak solutions} \cite{HW10} and \textit{intermediate weak solutions} \cite{SH12,LYZ13,S20,WZZ21}, the third type of solutions to \eqref{a3} are the \textit{small-smooth solutions}. More precisely, Li-Xu-Zhang \cite{LXZ13} showed the global existence of classical solutions to the 3D Cauchy problem of \eqref{a3} with small initial energy but possibly large oscillations and vacuum states at both the interior domain and the far field which generalized the previous result proved by Huang-Li-Xin \cite{HLX12} for compressible Navier-Stokes system. A similar result for 2D Cauchy problem has been established by L{\"u}-Shi-Xu \cite{LSX2016} with the help of spatial weighted energy method. Later on, Hong-Hou-Peng-Zhu \cite{HHPZ17} obtained global smooth solutions by requiring $\big[(\gamma-1)^{-\frac19}+\nu^{-\frac14}\big]E_0$ to be suitably small, where $E_0$ is the initial energy. Very recently, Chen-Huang-Shi \cite{CHS21,CHS22} investigated global strong solutions to the initial-boundary value problem of \eqref{a3} with Navier slip boundary conditions. We refer the readers to \cite{FL20,LS19,WW17,TW18} for more results on global solutions to multi-dimensional compressible non-resistive or inviscid MHD equations. There are some interesting results on other studies of \eqref{a3}, such as blow-up criterion of solutions \cite{DW15,XZ12}, asymptotic limits of solutions \cite{GLX19,L2012,JJL10}, the large time behavior of solutions \cite{LY11,ZZ21}, and so on.

Recently, there have been numerous studies on compressible heat conducting MHD equations \eqref{a1} due to its physical importance, complexity, rich phenomena, and mathematical challenges.
By using the method of weak convergence developed by Lions \cite{L1998} and Feireisl \cite{F04,EF04}, Ducomet and Feireisl \cite{DF06} obtained the global existence of variational weak solutions in a bounded domain of $\mathbb{R}^3$ under certain conditions upon the equations of state. This result was improved by Li and Guo \cite{LG2014} by allowing a rather more general constitutive relationship. At the same time, Hu and Wang \cite{HW2008} also proved the existence of a global variational weak solution of \eqref{a1} with large data. The main difference between \cite{DF06} and \cite{HW2008} is that they used the entropy equation \eqref{1.3} and the thermal equation \eqref{a1}$_3$, respectively. On the other hand, for global existence of strong solutions, Kawashima \cite{K83} first established the global-in-time existence of $H^3$ solutions to \eqref{a1} when the initial data was taken to be properly small in $H^3$ modulo a constant state.
His analysis consists an iterative procedure based on asymptotic decay rates for the corresponding linearized equations developed by Matsumura and Nishida \cite{MN80}.
It is worth mentioning that the initial data is small close to a constant state in $H^3$-norm indicates that there is absent of vacuum states. However, as emphasized in many papers (see, e.g., \cite{HLX12,LX20,LX22,LX23,LXZ13,HHPZ17}), the possible presence of vacuum is one of the major difficulties in the study of mathematical theory of compressible fluids. Very recently, several results are devoted to investigating global well-posedness of strong solutions to the 3D Cauchy problem of \eqref{a1} with vacuum both at interior region and far field. Liu and Zhong \cite{LZ20} proved global strong solutions provided that $\|\rho_0\|_{L^\infty}+\|b_0\|_{L^3}$ is suitably small and the viscosity coefficients satisfy $3\mu>\lambda$. This result was later improved in \cite{LZ22} where the authors showed the global existence and uniqueness of strong solutions, which may be of possibly large oscillations, when the initial data are of small total energy. Moreover, they also derived the algebraic time decay estimates of the solution. Hou-Jiang-Peng \cite{HJP22} obtained global strong solutions under the condition that $\|\rho_0\|_{L^1}+\|b_0\|_{L^2}$ is suitably small. Meanwhile, Liu and Zhong \cite{LZ2022} deduced global existence of strong solutions under a smallness condition on scaling invariant quantity independent of any norms of the initial data.
We refer to \cite{L22,LS21} for more results on global solutions to multi-dimensional full compressible non-resistive MHD equations. There are some interesting results on other studies of \eqref{a1}, such as blow-up criterion of solutions \cite{HL13,W21,Z20}, asymptotic limits of solutions \cite{JJL12,JJLX14,JLL13,KT11}, and so on.

Although there are abundant researches to the compressible heat-conducting MHD equations \eqref{a1}, yet it should be pointed out that all the results mentioned above tell us relatively little about the entropy. One of the main reasons is that the lack of the expression of the entropy in the vacuum region and the high singularity and degeneracy of the entropy equation \eqref{1.3} near the vacuum region. Very recently, Li and Xin \cite{LX20,LX22,LX23} investigated the uniform boundedness of the entropy to the Cauchy problem of one-dimensional and three-dimensional full compressible Navier-Stokes equations in a very technical and subtle way. Their results reveal that the uniform boundedness of the entropy can be propagated up to any finite time as long as the initial vacuum presents only at far fields and the initial density decays slowly enough at the far field.

We will consider the Cauchy problem of \eqref{a1} with the initial condition
\begin{align}\label{a2}
(\rho, u, \theta, b)|_{t=0}=(\rho_0, u_0, \theta_0, b_0).
\end{align}
Motivated by \cite{LX23}, the main purpose of this paper is to study the
existence of entropy-bounded solutions of \eqref{a1} and \eqref{a2} with only vacuum at infinity. The main novelty of this current work is that we obtain some new estimates on velocity and magnetic field which are important in controlling the strong coupling effects between velocity and magnetic fields. Those estimates will be used in deriving some \textit{a priori} energy estimates with singular weights.

Before stating our main result, let us first introduce some notations. For $1\le q\le \infty$ and positive integer $m$,
$L^q=L^q(\mathbb{R}^3)$ and $W^{m, q}=W^{m, q}(\mathbb{R}^3)$, respectively, are the standard
Lebesgue and Sobolev spaces, and $H^m=W^{m, 2}$. $D_0^1=D_0^1(\mathbb{R}^3)$ and $D^{m, q}=D^{m, q}(\mathbb{R}^3)$
are the homogeneous Sobolev spaces defined, respectively, as
\begin{align*}
D_0^1=\{u\in L^6|\ \nabla u\in L^2\},\
D^{m, q}=\{u\in L_{loc}^1|\ \nabla^\alpha u\in L^q, 1\le |\alpha|\le m\},
\end{align*}
and $D^{m, 2}=D^m$, while
$\dot{f}\triangleq f_t+u\cdot\nabla f$ denotes the material derivative of $f$.

Strong solutions considered in this paper are defined as follows.
\begin{definition}
Let $T>0$, assume that
\begin{align}\label{1.5}
0\le\rho_0\in H^1\cap W^{1, q}, \quad u_0\in D_0^1\cap D^2,\quad 0\le \theta_0\in D_0^1\cap D^2, \quad
b_0\in H^2,
\end{align}
for some $q\in (3, 6]$. A quadruple $(\rho, u, \theta, b)$ is called a strong solution to system \eqref{a1}
in $\mathbb{R}^3\times (0, T)$ with initial condition \eqref{a2}, if it has the following regularities
\begin{align*}
\begin{cases}
\rho\in C([0, T]; H^1\cap W^{1, q}),\ \rho_t\in C([0, T]; L^2\cap L^q),\\
(u, \theta)\in C([0, T]; D^1\cap D^2)\cap L^2(0, T; D^{2, q}),\ (u_t, \theta_t)\in L^2(0, T; D_0^1), \\
b\in C([0, T]; H^2)\cap L^2(0, T; H^3),\ b_t\in C([0, T];L^2)\cap L^2(0, T; H^1),\\
(\sqrt{\rho}u_t, \sqrt{\rho}\theta_t)\in L^\infty(0, T; L^2).
\end{cases}
\end{align*}
satisfies \eqref{a1} a.e. in $\mathbb{R}^3\times (0, T)$ and fulfills the initial condition \eqref{a2}.
\end{definition}

\begin{definition}
A quadruple $(\rho, u, \theta, b)$ is called a global strong solution to system \eqref{a1}, subject to \eqref{a2},
if it is a strong solution to the same system in $\mathbb{R}^3\times (0, T)$ for
any finite time $T$.
\end{definition}

Before stating the main result of this paper, let us recall the following two theorems on the local and global well-posedness of strong solutions
to system \eqref{a1} with initial condition \eqref{a2}, respectively.
\begin{theorem}[Local well-posedness, see \cite{FY09}]\label{t1}
Let $q\in (3, 6]$ and assume in addition to \eqref{1.5} that
\begin{align}
\begin{cases}
-\mu\Delta u_0-(\lambda+\mu)\nabla\divv u_0+\widetilde{R}\nabla(\rho_0\theta_0)
-b_0\cdot\nabla b_0+\frac12\nabla|b_0|^2=\sqrt{\rho_0}g_1,\\
-\kappa\Delta\theta_0-\mathcal{Q}(\nabla u_0)-\nu|\curl b_0|^2=\sqrt{\rho_0}g_2,\label{1.6}
\end{cases}
\end{align}
for given functions $g_1$, $g_2\in L^2$. Then there exists a positive time
$T_*$ depending only on the initial data, such that system \eqref{a1}, subject to \eqref{a2},
admits a unique strong solution $(\rho, u, \theta, b)$ in $\mathbb{R}^3\times (0, T_*)$.
\end{theorem}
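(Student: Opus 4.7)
The plan is to use the standard approximation scheme for compressible flows with vacuum, adapted from the Cho--Kim methodology. Since the momentum and temperature equations in \eqref{a1} degenerate where $\rho=0$, I would first regularize the initial density by setting $\rho_0^\delta=\rho_0+\delta$ for small $\delta>0$, solve the resulting strictly positive problem, derive a priori estimates uniform in $\delta$, and then send $\delta\to 0^+$.

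For each fixed $\delta$, I would construct the approximate solution by Picard iteration. Given a velocity field $v$ in a suitable class, first solve the linear transport equation $\rho_t+\divv(\rho v)=0$ with initial datum $\rho_0^\delta$, which preserves a positive lower bound on $\rho$ on short time intervals. With $\rho$ and $v$ in hand, the momentum, temperature and magnetic equations become three linear parabolic problems (in $u$, $\theta$, $b$ respectively) coupled only through lower-order source terms; since $\rho\geq\delta/2>0$, classical $L^2$-based parabolic theory delivers $(u,\theta,b)$ in spaces matching the target regularity. A contraction argument in a low-regularity norm such as $L^\infty_t L^2_x\cap L^2_t D_0^1$ yields a fixed point, and elliptic regularity for the Lam\'e operator $-\mu\Delta-(\lambda+\mu)\nabla\divv$ (and the Laplacian for $\theta$ and $b$) upgrades the spatial regularity to the class claimed in the statement.

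The heart of the argument is a family of a priori estimates on $(\rho^\delta,u^\delta,\theta^\delta,b^\delta)$ that are uniform in $\delta$. I would prove in turn: transport estimates giving $\rho^\delta\in L^\infty_t(H^1\cap W^{1,q})$ driven by an $L^1_t W^{1,\infty}_x$ bound on $u^\delta$; basic energy estimates on $(u^\delta,\theta^\delta,b^\delta)$; and, crucially, $L^\infty_t L^2$ bounds on $\sqrt{\rho^\delta}\,u^\delta_t$ and $\sqrt{\rho^\delta}\,\theta^\delta_t$ obtained by differentiating the momentum and temperature equations in time, testing against $u^\delta_t$ and $\theta^\delta_t$, and closing via Gronwall. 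The initial values of these two quantities are finite precisely by the compatibility conditions \eqref{1.6}, which read formally as $\rho_0 u_t|_{t=0}=\sqrt{\rho_0}\,g_1-\rho_0 u_0\cdot\nabla u_0$ and similarly for $\theta_t|_{t=0}$, with right-hand sides belonging to $L^2$ in the weighted sense needed; this launches the Gronwall argument on a short interval $(0,T_*)$ independent of $\delta$. Elliptic regularity applied at each time slice then supplies the $L^2(0,T_*;D^{2,q})$ bounds.

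Passing to the limit $\delta\to 0^+$ via weak convergence and Aubin--Lions compactness produces a strong solution with the stated regularity, and uniqueness is a standard $L^2$ energy estimate on the difference of two solutions using the $L^1_t W^{1,\infty}_x$ control of $u$ and the $\sqrt{\rho}$-weighted bounds on $u_t$, $\theta_t$ to absorb the nonlinear contributions. The main obstacle throughout is the vacuum: absent \eqref{1.6}, $\sqrt{\rho^\delta}\,u^\delta_t(0)$ and $\sqrt{\rho^\delta}\,\theta^\delta_t(0)$ would blow up as $\delta\to 0^+$ and the higher-order estimates would be impossible to close. The magnetic field is comparatively tame because $\nu>0$ makes the induction equation uniformly parabolic regardless of the behavior of $\rho$; the coupling terms $b\cdot\nabla b-\tfrac12\nabla|b|^2$ and $\nu|\curl b|^2$ are absorbed using Sobolev and Gagliardo--Nirenberg inequalities together with the full parabolic regularity of $b$ in $L^\infty_t H^2\cap L^2_t H^3$.
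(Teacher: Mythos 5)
The paper does not itself prove Theorem \ref{t1}; it is quoted directly from Fan and Yu \cite{FY09}, so there is no in-paper argument to compare your proposal against. Your sketch is a faithful summary of the Cho--Kim-type strategy carried out in \cite{FY09}: regularize the density away from vacuum, construct approximate solutions by linearization and a fixed-point argument, derive $\delta$-uniform a priori estimates on $\sqrt{\rho}\,u_t$ and $\sqrt{\rho}\,\theta_t$ launched at $t=0$ by the compatibility conditions \eqref{1.6}, pass to the limit by compactness, and prove uniqueness by an energy estimate on differences. One small sign slip worth flagging: evaluating the momentum equation at $t=0$ and using \eqref{1.6} gives $\rho_0 u_t|_{t=0}=-\sqrt{\rho_0}\,g_1-\rho_0 u_0\cdot\nabla u_0$ (the $g_1$ term enters with a minus sign, not a plus), though the conclusion you draw --- that $\sqrt{\rho_0}\,u_t(0)\in L^2$ --- is of course unaffected.
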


\begin{theorem}[Global well-posedness, see \cite{LZ2022}]\label{t2}
Assume, in addition to \eqref{1.5} and \eqref{1.6}, that $3\mu>\lambda$. Then there exists
a positive constant $\varepsilon_0$ depending only on $\widetilde{R}$, $\gamma$, $c_v$, $\mu$, $\lambda$, and $\kappa$
 such that system \eqref{a1}, subject to \eqref{a2}, has a unique global
strong solution provided that
\begin{align*}
N_0\triangleq \bar{\rho}\big[\|\rho_0\|_{L^3}+\bar{\rho}^2(\|\sqrt{\rho_0}u_0\|_{L^2}^2
+\|b_0\|_{L^2}^2)\big]
\big[\|\nabla u_0\|_{L^2}^2+\bar{\rho}(\|\sqrt{\rho_0}E_0\|_{L^2}^2+\|\nabla b_0\|_{L^2}^2)\big]\le \varepsilon_0,
\end{align*}
where $E_0=\frac{|u_0|^2}{2}+c_v\theta_0$ and $\bar{\rho}=\|\rho_0\|_{L^\infty}+1$.
\end{theorem}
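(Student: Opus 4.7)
The plan is to combine the local existence result of Theorem \ref{t1} with time-uniform a priori estimates under the smallness assumption $N_0 \le \varepsilon_0$, and then extend globally by a continuation argument based on a Serrin-type blow-up criterion (the latter being a prerequisite one would also need to establish or cite). Let $T^{\ast}$ be the maximal existence time; the goal is to derive bounds that contradict any such breakdown at finite $T^{\ast}$.

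First I would derive the basic total-energy identity by multiplying \eqref{a1}$_2$ by $u$, integrating the internal-energy form of \eqref{a1}$_3$, and multiplying \eqref{a1}$_4$ by $b$; the magnetic-work terms cancel via $\divv b = 0$, yielding $L^\infty_t L^2_x$ control of $\sqrt\rho u$, $\sqrt\rho E$, and $b$. The cornerstone of the argument is then an upper bound on $\rho$ in the Hoff--Huang--Li--Xin tradition: introduce the effective viscous flux $F = (2\mu+\lambda)\divv u - p - \tfrac12|b|^2$ and vorticity $\omega = \curl u$. The hypothesis $3\mu > \lambda$ is exactly the algebraic condition making the quadratic form in $\nabla u$ associated with $(F,\omega)$ coercive, enabling $\|\nabla u\|_{L^p} \lesssim \|F\|_{L^p} + \|\omega\|_{L^p} + \|p + \tfrac12|b|^2\|_{L^p}$. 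Applying Zlotnik's inequality to $\log \rho$ along particle trajectories, together with Sobolev/BMO control of $F$, then produces $\|\rho(t)\|_{L^\infty} \le 2\bar\rho$ uniformly, provided a smallness quantity controlled by $N_0$ stays small.

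The next tier closes higher-order estimates on $\|\nabla u\|_{L^\infty_t L^2}$, $\|\nabla b\|_{L^\infty_t L^2}$, $\|\sqrt\rho E\|_{L^\infty_t L^2}$, and on the material derivatives $\sqrt\rho\dot u$, $b_t$, $\sqrt\rho\dot\theta$ in $L^\infty_t L^2 \cap L^2_t D^1_0$. These come from testing \eqref{a1}$_2$ against $\dot u$, then differentiating in time and testing against $\dot u$, and analogously for \eqref{a1}$_3$ and \eqref{a1}$_4$. The magnetic coupling terms $b\cdot\nabla b$ in the momentum equation and $b\cdot\nabla u + b\divv u$ in the induction equation generate cubic nonlinearities which I would control via Gagliardo--Nirenberg interpolation combined with the density upper bound, producing estimates of the form $\tfrac{d}{dt}\mathcal{E} + \mathcal{D} \le C N_0^{\alpha} \mathcal{E}^{\beta}$ for an aggregate energy $\mathcal{E}$ and dissipation $\mathcal{D}$. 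Choosing $\varepsilon_0$ small enough then closes everything via a Gronwall/bootstrap argument.

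The hardest point is the circularity: the density upper bound requires $F \in L^1_t L^\infty_x$-type control, which in turn needs $\sqrt\rho\dot u \in L^\infty_t L^2$, while converting $\sqrt\rho$-weighted estimates into un-weighted ones relies on the density bound itself. The precise scaling-invariant structure of $N_0$ (with the $\bar\rho$ weights, $\|\rho_0\|_{L^3}$, and the products of kinetic/magnetic energy with higher norms) is tuned so that a \emph{single} smallness threshold breaks this loop. A secondary difficulty specific to \eqref{a1} compared with the pure Navier--Stokes case is that the Joule dissipation $\nu|\curl b|^2$ in the internal-energy equation prevents decoupled thermal estimates, forcing the $\theta$ and $b$ estimates to be performed jointly; this is where one must most carefully exploit the $L^2_tH^3$ regularity of $b$ furnished by the induction equation to absorb the extra heat source.
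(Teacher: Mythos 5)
The statement you are asked to prove is not proved in this paper at all: Theorem \ref{t2} is explicitly labeled ``see \cite{LZ2022}'' and is imported as a black box from a companion preprint by the same authors. There is therefore no proof in this manuscript to compare your sketch against; the paper simply cites the result and moves on. Your proposal is thus not replacing a proof you could have read here, and should be judged only as a plausibility sketch of what the cited reference likely does.

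As such, your outline is a reasonable account of the standard Hoff--Huang--Li--Xin machinery for small-energy global solutions with vacuum, adapted to the full MHD system: basic energy balance with cancellation of magnetic work, the effective viscous flux $F=(2\mu+\lambda)\divv u-p-\tfrac12|b|^2$ together with $\omega=\curl u$, a Zlotnik-type logarithmic bound on $\rho$ along characteristics, and then material-derivative estimates on $\dot u,\dot\theta,b_t$ closed by Gr\"onwall under the smallness of $N_0$, followed by continuation via a blow-up criterion. You also correctly flag the two genuine sources of difficulty: the circularity between the density bound and the weighted higher-order estimates (which is exactly what the scaling-invariant structure of $N_0$ is designed to break), and the Joule heating $\nu|\curl b|^2$ forcing the temperature and magnetic estimates to be done jointly. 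One caveat: your claim that $3\mu>\lambda$ is ``exactly'' the coercivity condition for $(F,\omega)$ to control $\nabla u$ in $L^p$ is overstated; $L^p$ control of $\nabla u$ by $F$, $\omega$, and $p+\tfrac12|b|^2$ only needs $2\mu+\lambda>0$. The stricter condition $3\mu>\lambda$ enters elsewhere, typically in making a particular weighted momentum estimate (of the type $\int\rho|u|^r$) or a sign structure in the Zlotnik argument work out, so you should not present it as a pointwise coercivity statement for the stress tensor. Since the proof lives in \cite{LZ2022}, verifying the precise location where $3\mu>\lambda$ is used would require consulting that reference.
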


Now we can state our main result of the present paper.
\begin{theorem}\label{thm1}
Assume, in addition to \eqref{1.5} and \eqref{1.6}, that the initial density $\rho_0$
is positive on $\mathbb{R}^3$ and satisfies
\begin{align}\label{h1}\tag{$\rm H1$}
|\nabla\rho_0(x)|\le K_1\rho_0^\frac32(x), \quad \forall x\in\mathbb{R}^3,
\end{align}
for a positive constant $K_1$. Denote
\begin{align*}
&s_0=c_v\Big(\log\frac{\widetilde{R}}{A}+\log\theta_0-(\gamma-1)\log\rho_0\Big), \ \underline s_0=\inf_{x\in\mathbb{R}^3}s_0(x), \
\overline s_0=\sup_{x\in\mathbb{R}^3}s_0(x),\\
 \quad
&\wp_0=\mu\Delta u_0+(\mu+\lambda)\nabla \divv u_0-\widetilde{R}\nabla(\rho_0\theta_0)+b_0\cdot\nabla b_0-\frac12\nabla|b_0|^2.
\end{align*}
Let $(\rho, u, \theta, b)$  be an arbitrary strong solution to system \eqref{a1} in $\mathbb{R}^3\times(0, T)$, subject to \eqref{a2},
and $s$ be the entropy given by \eqref{1.2}. Then the following statements hold true.

(i) If $u_0\in L^2$, then $u\in L^\infty(0, T; L^2)$.

(ii) If $\Big(\frac{u_0}{\sqrt{\rho_0}}, \theta_0\Big)\in L^2$,
then $\Big(\frac{u}{\sqrt{\rho}}, \theta\Big)\in L^\infty(0, T; L^2)$.

(iii) If there exists some positive constant $K_2$ such that
\begin{align}\label{h2}\tag{$\rm H2$}
|\nabla^2\rho_0(x)|\le K_2\rho_0^2(x), \quad \forall x\in\mathbb{R}^3,
\end{align}
then it holds that
\begin{align*}
\inf_{\mathbb{R}^3\times(0, T)}s(x, t)>-\infty, \quad\text{as~long~as}~\underline s_0>-\infty.
\end{align*}

(iv)  Assume in addition that \eqref{h2} holds true, then one has
\begin{align*}
\sup_{\mathbb{R}^3\times(0, T)}s(x, t)<+\infty,
\end{align*}
as long as $\overline s_0<+\infty$ and $\Big(\rho_0^\frac{1-\gamma}{2}u_0$,
$\rho_0^{-\frac{\gamma}{2}}\nabla u_0$, $\rho_0^{-\frac{\gamma}{2}}b_0$, $\rho_0^{-\frac{\gamma+2}{2}}\nabla b_0$,
$\rho_0^{1-\frac{\gamma}{2}}\theta_0$, $\rho_0^{1-\frac{\gamma}{2}}\nabla\theta_0$,
 $\rho_0^{-\frac{\gamma}{2}}\wp_0\Big)\in L^2$.
\end{theorem}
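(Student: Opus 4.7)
The plan is to follow the framework developed by Li-Xin \cite{LX23} for the full compressible Navier-Stokes system, which combines singularly weighted energy estimates with a De Giorgi iteration on the entropy equation, and to adapt both pieces to the strong velocity-magnetic field coupling present in \eqref{a1}.

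For (i), I would test the momentum equation against $u$ and combine with the magnetic equation \eqref{a1}$_4$ tested against $b$; a careful integration by parts cancels the coupling contributions arising from $b\cdot\nabla b$ and $b\,\divv u$, leaving a total energy identity whose pressure work $\int p\,\divv u\,dx$ is absorbed by the energy equation \eqref{a1}$_3$ tested against $1$. For (ii), I would multiply the momentum equation by $2u/\rho$ and use $\rho_t+\divv(\rho u)=0$ to produce $\frac{d}{dt}\!\int|u|^2/\rho\,dx$ directly; the singular weight generates terms involving $\nabla\rho/\rho$, which are exactly controlled by (H1) since $|\nabla\rho|/\rho\le K_1\rho^{1/2}$. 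The bound on $\theta$ then follows from testing \eqref{a1}$_3$ against $1$ once the dissipation sources on its right-hand side have been controlled using the already-established estimates on $\nabla u$ and $\nabla b$.

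For (iii) and (iv), I would rewrite \eqref{1.3} as a transport-diffusion equation
\begin{align*}
\rho(s_t+u\cdot\nabla s)-\tfrac{\kappa}{c_v}\Delta s=F,\ F:=\kappa(\gamma-1)\divv\Bigl(\tfrac{\nabla\rho}{\rho}\Bigr)+\tfrac{1}{\theta}\Bigl(\mathcal Q(\nabla u)+\nu|\curl b|^2+\tfrac{\kappa|\nabla\theta|^2}{\theta}\Bigr),
\end{align*}
in which the dissipation piece of $F$ is manifestly nonnegative, while the first term, expanded as $\Delta\rho/\rho-|\nabla\rho|^2/\rho^2$, is bounded in absolute value by $(K_1^2+K_2)\rho$ thanks to (H1)-(H2). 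For (iii) the nonnegative source is favorable for a lower bound on $s$, and a De Giorgi iteration on the negative-part truncations $\phi_k:=(k-s)_+$ with levels $k\uparrow+\infty$, fed by the weighted estimates of (i)-(ii) and by Gagliardo-Nirenberg, yields the uniform lower bound. For (iv) the same source becomes unfavorable, and closing the De Giorgi iteration on $(s-k)_+$ requires controlling $\theta^{-1}\mathcal Q(\nabla u)$, $\theta^{-1}\nu|\curl b|^2$ and $\theta^{-2}\kappa|\nabla\theta|^2$ in appropriate weighted $L^p_tL^q_x$ norms — which is exactly what the seven weighted hypotheses on the initial data supply, provided they can be propagated in time.

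The main obstacle is the time-propagation of those weighted norms in (iv). For the Navier-Stokes components $(u,\theta,\wp_0)$ the argument essentially follows \cite{LX23}; the genuinely new ingredient is the singularly weighted energy estimate for the magnetic field. Testing \eqref{a1}$_4$ against $\rho^{-\gamma}b$ produces bulk terms $\int(b\cdot\nabla u)\cdot b\,\rho^{-\gamma}\,dx$ and $\int|b|^2\divv u\,\rho^{-\gamma}\,dx$, which lack a definite sign and must be absorbed into the magnetic dissipation $\nu\int|\nabla b|^2\rho^{-\gamma}\,dx$ at the cost of lower-order weighted norms of $\nabla u$ already obtained, together with commutator errors from the moving weight $\rho^{-\gamma}$ that are controlled via (H1). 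Unlike $u$, the magnetic field $b$ has no pressure-like structure to exploit, and it is precisely the coupling term $b\cdot\nabla u$ interacting with the singular weight $\rho^{-\gamma}$ that forces the particular form of the hypotheses imposed on $b_0$ and $\nabla b_0$ in statement (iv).
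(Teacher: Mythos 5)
Your overall framework — density bounds, singularly weighted energy estimates, then a De Giorgi iteration — does match the paper's strategy, but there are two genuine gaps in the proposal that would block the argument.

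First, in part (ii) you propose to multiply the momentum equation by $2u/\rho$ and to control the resulting $\nabla\rho/\rho$ terms by the hypothesis $(\mathrm{H1})$. But $(\mathrm{H1})$ is a pointwise condition on the \emph{initial} density $\rho_0$, namely $|\nabla\rho_0|\le K_1\rho_0^{3/2}$, and there is no reason for the evolved density $\rho(\cdot,t)$ to satisfy the analogous bound $|\nabla\rho|\le K_1\rho^{3/2}$; the transport equation for $\rho$ will in general degrade such a pointwise gradient bound. The paper avoids the moving weight entirely: all singular weights are taken as powers of $\rho_0$ (regularized to $\rho_{0\delta}=\rho_0+\delta$ and localized by cutoffs $\chi_R$ before passing $\delta\to0$, $R\to\infty$), and Lemma~\ref{l21} is used to convert $\rho$ into $\rho_0$ up to a factor $e^{\pm C_*\Phi_T}$. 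Parts (i) and (ii) are then read off Lemma~\ref{l22} with $\alpha=1$ and $\alpha=2$.

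Second, and more seriously, in part (iv) you propose a De Giorgi iteration on $(s-k)_+$ using the entropy equation \eqref{1.3} directly, and plan to control $\theta^{-1}\mathcal Q(\nabla u)$, $\theta^{-1}|\curl b|^2$ and $\theta^{-2}|\nabla\theta|^2$. Those factors of $\theta^{-1}$ and $\theta^{-2}$ have no a priori lower bound away from zero (the point of the whole theorem is precisely that $\theta$ degenerates with $\rho$), so these sources cannot be placed in any weighted $L^p_tL^q_x$ norm. The paper's decisive device, stated explicitly at the start of Section~\ref{sec3}, is to perform the upper-bound iteration not on the entropy equation but on the temperature equation $\eqref{a1}_3$, with truncations $\theta_\ell=\theta-\ell e^{M_Tt}\rho_0^{\gamma-1}$ and test function $\rho_0^{1-2\gamma}(\theta_\ell)_+$; the dissipative sources then appear as $\mathcal Q(\nabla u)$ and $|\curl b|^2$ without inverse powers of $\theta$, and the weighted $L^\infty_t L^6_x$ bounds on $\rho_0^{-\gamma/2}\nabla u$ and $\rho_0^{-\gamma/2}\nabla b$ from Lemma~\ref{l27} are exactly what closes the iteration. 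Relatedly, your proposed magnetic test function $\rho^{-\gamma}b$ is too crude: because of the coupled transport term $u\cdot\nabla b$, the paper instead tests $\eqref{a1}_4$ against $\rho_0^{-(\gamma+2)}\Delta b$, $\rho_0^{-(\gamma+2)}|b|^2b$, and $\rho_0^{-(\gamma+2)}\tilde b\,\Delta|\tilde b|^2$, and supplements this with the effective viscous flux $F$ and the new $u_t,b_t$ estimates of Lemma~\ref{l26} — none of which appear in your outline but which supply the $L^6$ bounds needed for (iv).
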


\begin{remark}
It is worth mentioning that there is no need to require the assumptions \eqref{h1} and \eqref{h2} in the local and global well-posedness theory of strong solutions of \eqref{a1} (see Theorems \ref{t1} and \ref{t2}). If, in addition to \eqref{1.5} and \eqref{1.6}, assume that the conditions in Theorem \ref{t2} hold true, then the entropy given in Theorem \ref{thm1} is uniformly bounded for any finite time.
\end{remark}

\begin{remark}
When $b=0$, i.e., there is no electromagnetic field effect, the compressible heat-conducting MHD equations \eqref{a1} becomes the full compressible Navier-Stokes equations, and Theorem \ref{thm1} is similar as the result of Li and Xin \cite{LX23}. Thus, we successfully extend the theory of entropy-bounded solutions given in \cite{LX23} for compressible heat-conducting Navier-Stokes system to the full compressible MHD system.
\end{remark}

The proof of Theorem \ref{thm1} is based on singularly weighted energy estimates and the De Giorgi type iterations developed by Li and Xin \cite{LX20,LX22,LX23}. However, because of the interaction between magnetic field and the hydrodynamic motion, the problem of MHD equations considered here is much more complicated than that of Navier-Stokes equations \cite{LX23}. For example, some additional difficulties will arise when we deal with the magnetic force and the convection term. In particular, the usual singularly weighted $L^2(0,T;L^2(\mathbb{R}^3))$-norm estimates of $b_t$ cannot be directly estimated because of the strong coupled term $u\cdot\nabla b$. To this end, we attempt to borrow some ideas used in \cite{LSX2016}, where the authors investigated global strong solutions to the 2D Cauchy problem of compressible isentropic MHD equations \eqref{a3}. Even so, new difficulties arise in our case since the crucial analysis in \cite{LSX2016} heavily relies on the basic fact that the initial density and the initial magnetic field need fast vanishing at far field.

To overcome these difficulties stated above, we need to make use of important relations among the velocity, magnetic field, and the effective viscous flux $F\triangleq(2\mu+\lambda)\divv u-p-\frac12|b|^2$. The following key observations help us to deal with the interaction of the velocity field and the magnetic field very well. First of all, we try to obtain the estimates on the $L^\infty(0,T;L^2(\mathbb{R}^3))$-norm of the gradients of velocity and  magnetic field with singular weights. On the one hand, multiplying \eqref{a1}$_2$ by $\rho_0^{-\gamma} u_t$, we find that the key point is to control the terms $\|\rho_0^{-\frac{\gamma+1}{2}}\nabla F\|_{L^2}$ and $\|\rho_0^{-\frac{\gamma+2}{2}}|b||\nabla b|\|_{L^2}$ (see \eqref{2.27}).
Using some facts on the effective viscous flux, we succeed in bounding the term $\|\rho_0^{-\frac{\gamma+1}{2}}\nabla F\|_{L^2}$ by
$\|\rho_0^{-\frac{\gamma+1}{2}}|b|^2\|_{L^2}$ (see \eqref{2.34}).
On the other hand, the usual $L^2(0,T;L^2(\mathbb{R}^3))$-norm of $b_t$ with singular weight cannot be directly estimated due to the strong
coupled term  $u\cdot \nabla b$. Motivated by \cite{LSX2016}, multiplying   \eqref{a1}$_4$ by $\rho_0^{-(\gamma+2)}\Delta b$ instead of $\rho_0^{-\gamma}b_t$ (see \eqref{2.28}), the coupled term $u\cdot\nabla b$ can be controlled after integration by parts (see \eqref{z2.28}).
Next, using the structure of the magnetic equation,
we multiply \eqref{a1}$_4$ by $\rho_0^{-(\gamma+2)}|b|^2 b$ and
thus obtain some the desired estimates on
$\|\rho_0^{-\frac{\gamma+1}{2}}|b|^2\|_{L^2}$ and $\|\rho_0^{-\frac{\gamma+2}{2}}|b||\nabla b|\|_{L^2}$ (see \eqref{2.33}).
Secondly, motivated by \cite{LSX2016}, we multiply the magnetic equation by $\rho_0^{-\frac{\gamma+2}{2}}b\Delta|b|^2$ and succeed in controlling the coupled term $u\cdot \nabla b$ by the gradients of both the velocity and the magnetic field after integration by parts. This yields some new \textit{a priori} estimate of the $L^2(0,T;L^2)$-norm of $\rho_0^{-\frac{\gamma+2}{2}}|b||\Delta b|$ (see \eqref{2.47}),
which plays a crucial role in deriving the estimates on the material derivative $\dot u\triangleq u_t+u\cdot\nabla u$ in
both the $L^\infty(0,T;L^2(\mathbb{R}^3))$-norm of $\rho_0^{1-\frac{\gamma}{2}}\dot u$
and the $L^2(\mathbb{R}^3\times(0,T ))$-norm of $\rho_0^{\frac{1-\gamma}{2}}\nabla \dot u$ (see \eqref{z2.49}).
Thirdly, the estimates on the $L^\infty(0,T;L^6(\mathbb{R}^3))$-norm of $\rho_0^{-\frac{\gamma}{2}}\nabla u$ and
$\rho_0^{-\frac{\gamma}{2}}\nabla b$ are the key point to deal with the upper bound of iterations. However, it seems difficult to obtain $L^\infty(0,T;L^6(\mathbb{R}^3))$-norm of $\rho_0^{-\frac{\gamma}{2}}\nabla b$
in terms of $\rho_0^{1-\frac{\gamma}{2}}\dot{u}$. Noticing that
\begin{align*}
\|\nabla (\rho_0^{-\frac{\gamma}{2}}b)\|_{L^6}
&\le C\|\rho_0^{-\frac{\gamma}{2}}b_t\|_{L^2}+C\|b\|_{L^4}\|\rho_0^{-\frac{\gamma}{2}}\nabla u\|_{L^4}
+C\|\rho_0^{-\frac{\gamma}{2}}u\|_{L^6}\|u\|_{L^6}\|\rho_0^{-\frac{\gamma}{2}}\nabla b\|_{L^2}\|\nabla b\|_{L^6}\nonumber\\
&\quad+C\|\rho_0^\frac{1-\gamma}{2}\nabla b\|_{L^2}
+C\|\nabla\rho_0^\frac{1-\gamma}{2}b\|_{L^2}.
\end{align*}
Thus, it suffices to estimate the $L^\infty(0,T;L^2(\mathbb{R}^3))$-norm of $\rho_0^{-\frac{\gamma}{2}}b_t$. To this end, we take advantage of the ideas used in \cite{LSX2016}. Roughly speaking, we may use the estimates of $\rho u_t$ and $\nabla u_t$ to control $b_t$. In fact, by virtue of the condition \eqref{h2} and integration by parts, we successfully deduce the estimates on $L^\infty(0,T)$-norm of
$\|(\rho_0^{-\frac{\gamma}{2}}b_t,\rho_0^{1-\frac{\gamma}{2}}u_t)\|_{L^2}$ and $L^2(0,T)$-norm of
$\|(\rho_0^{-\frac{\gamma}{2}}\nabla b_t,\rho_0^{1-\frac{\gamma}{2}}\nabla u_t)\|_{L^2}$ (see Lemma \ref{l26}), which is absent in \cite{LX23}. This
new singularly weighted estimate is very important to obtain the upper bound of iterations.
%More specifically, we have
%\begin{align*}
%\sup_{0\le t\le T}\Big\|\Big(\rho_0^{1-\frac{\gamma}{2}}u_t, \rho_0^{-\frac{\gamma}{2}}b_t\Big)\Big\|_{L^2}^2
%+\int_0^T\Big\|\Big(\rho_0^\frac{1-\gamma}{2}\nabla u_t, \rho_0^{-\frac{\gamma}{2}}\nabla b_t\Big)\Big\|_{L^2}^2dt\le C.
%\end{align*}
Finally, motivated by \cite{LX23}, with the help of all the \textit{a priori} estimates stated above and modified De Giorgi type iteration (see Appendix),
 we obtain the desired the estimates
on the lower and upper bounds of the entropy.

The rest of this paper is arranged as follow. In Section \ref{sec2}, we derive
important singularly weighted \textit{a priori} energy estimates. Section \ref{sec3} is devoted to carrying out the De Giorgi type iterations
with singular weights, which are used to prove Theorem \ref{thm1} in
Section \ref{sec4}.

\section{Singularly weighted \textit{a priori} estimates}\label{sec2}
This section is devoted to deriving some \textit{a priori} energy estimates with singular weights
which will be used in next section to carrying out suitable De Giorgi iterations. Throughout this section,
as well as the next one, we always assume that $(\rho, u, \theta, b)$ is a strong solution to system \eqref{a1}
in $\mathbb{R}^3\times (0, T)$, subject to \eqref{a2}, for a given positive time $T$. The initial density
is assumed to satisfy \eqref{h1}. For simplicity, we denote
\begin{align*}
\int \cdot dx=\int_{\mathbb{R}^3}\cdot dx.
\end{align*}
Moreover, we write
\begin{align}\label{2.1}
\phi(t)\triangleq1+\|\sqrt{\rho_0}u\|_{L^\infty}^2+\|\nabla u\|_{L^\infty}^2
+\|\nabla b\|_{L^\infty}^2+\|b\|_{L^\infty}^2, \ \Phi_T\triangleq\int_0^T\phi(t) dt.
\end{align}

The following lemma gives the lower and upper bounds of the density, whose proof can be found in \cite{LX22}.
\begin{lemma}\label{l21}
There exists a positive constant $C_*$ depending only on $K_1$ such that
\begin{align*}
e^{-C_*\Phi_T}\rho_0(x)\le \rho(x, t)\le e^{C_*\Phi_T}\rho_0(x), \quad \forall (x, t)\in \mathbb{R}^3\times (0, T).
\end{align*}
\end{lemma}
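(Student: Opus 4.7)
The plan is to work with the Eulerian ratio $\eta(x,t) \triangleq \rho(x,t)/\rho_0(x)$ (well-defined since $\rho_0>0$ by assumption) and to track $\log \eta$ along the Lagrangian flow $X(t;y)$ defined by $\partial_t X = u(X,t)$, $X(0;y)=y$. Starting from the continuity equation $\rho_t + \divv(\rho u) = 0$ and substituting $u\cdot\nabla\rho = \rho_0\,u\cdot\nabla\eta + \eta\,u\cdot\nabla\rho_0$, one obtains the clean transport identity
\begin{align*}
\eta_t + u\cdot\nabla\eta + \eta\bigl(\divv u + u\cdot\nabla\log\rho_0\bigr) = 0.
\end{align*}
Because $\rho$ and $\rho_0$ are strictly positive, $\log \eta$ makes sense, and along the characteristic this reads
\begin{align*}
\frac{d}{dt}\log\eta(X(t;y),t) = -\divv u(X,t) - u(X,t)\cdot\nabla\log\rho_0(X),
\end{align*}
with $\log\eta(y,0)=0$. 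Integrating reduces the lemma to bounding two time integrals of pointwise quantities.

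Next, I would invoke the structural hypothesis \eqref{h1}, which gives $|\nabla\rho_0/\rho_0|\le K_1\rho_0^{1/2}$ pointwise, whence
\begin{align*}
|u\cdot\nabla\log\rho_0| \le K_1\,|u|\,\rho_0^{1/2} \le K_1\,\|\sqrt{\rho_0}\,u\|_{L^\infty}.
\end{align*}
This is precisely why the weight $\sqrt{\rho_0}u$ appears in the definition of $\phi(t)$: it exactly absorbs the dangerous drift produced by the spatial comparison. The remaining term is handled trivially by $|\divv u|\le C\|\nabla u\|_{L^\infty}$. Collecting,
\begin{align*}
|\log\eta(X(t;y),t)| \le \int_0^t\bigl(C\|\nabla u\|_{L^\infty} + K_1\|\sqrt{\rho_0}\,u\|_{L^\infty}\bigr)\,ds.
\end{align*}

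To convert these $L^1_t$ integrals into $\Phi_T$, I use $a\le \tfrac12(1+a^2)$: for example $\int_0^t\|\nabla u\|_{L^\infty}\,ds \le \tfrac12\int_0^T(1+\|\nabla u\|_{L^\infty}^2)\,ds\le \tfrac12\Phi_T$, and similarly for $\|\sqrt{\rho_0}u\|_{L^\infty}$. The constant $1$ inside the definition of $\phi(t)$ is exactly what makes this bookkeeping work. Hence $|\log\eta(X(t;y),t)|\le C_*\Phi_T$ with $C_*=C_*(K_1)$. Since $u$ is Lipschitz in the strong-solution class, the flow map $y\mapsto X(t;y)$ is a bijection of $\mathbb{R}^3$, so every Eulerian point $x$ is attained, and exponentiating yields $e^{-C_*\Phi_T}\rho_0(x)\le \rho(x,t)\le e^{C_*\Phi_T}\rho_0(x)$ for all $(x,t)\in\mathbb{R}^3\times(0,T)$.

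The main obstacle is conceptual rather than computational: one cannot merely integrate $\divv u$ along trajectories, because the comparison is against $\rho_0(x)$ at the fixed Eulerian location $x$, not against $\rho_0(X^{-1}(t;x))$. Passing to $\eta=\rho/\rho_0$ generates the additional drift $u\cdot\nabla\log\rho_0$, and without \eqref{h1} this term is uncontrollable in the vacuum-at-infinity regime because $1/\rho_0$ blows up. The $3/2$-power on the right-hand side of \eqref{h1} is sharp in the sense that it exactly matches the weight in $\|\sqrt{\rho_0}u\|_{L^\infty}$, closing the estimate with no further smallness input on the solution.
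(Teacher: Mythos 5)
Your proof is correct, and it follows the standard Lagrangian approach that the paper inherits from Li--Xin \cite{LX22}: writing $\eta=\rho/\rho_0$, integrating $\log\eta$ along characteristics, and using \eqref{h1} to control the drift $u\cdot\nabla\log\rho_0$ by $K_1\|\sqrt{\rho_0}u\|_{L^\infty}$, with the Young-inequality step $a\le\frac12(1+a^2)$ converting $L^1_t$ norms into $\Phi_T$. Your remark about why the extra drift term (and hence hypothesis \eqref{h1}) is unavoidable when comparing $\rho(x,t)$ to $\rho_0$ at the \emph{same} Eulerian point is exactly the key conceptual point.
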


\begin{lemma}\label{l22}
For any $\alpha>0$, there exists a positive constant $C$ depending only on $\alpha$, $\mu$, $\lambda$, $\widetilde{R}$, $c_v$, $\kappa$, $\nu$, $K_1$, and $\Phi_T$ such that
\begin{align}\label{2.2}
&\sup_{0\le t\le T}\Big\|\Big(\rho_0^\frac{1-\alpha}{2}u, \rho_0^{1-\frac{\alpha}{2}}\theta,
 \rho_0^{-\frac{\alpha}{2}}b\Big)\Big\|_{L^2}^2
+\int_0^T\Big\|\Big(\rho_0^{-\frac{\alpha}{2}}\nabla u,
\sqrt{\rho_0}\rho_0^{-\frac{\alpha}{2}}\nabla\theta,
\rho_0^{-\frac{\alpha}{2}}\nabla b\Big)\Big\|_{L^2}^2dt\nonumber\\
&\le C\Big\|\Big(\rho_0^\frac{1-\alpha}{2}u_0,
\rho_0^{1-\frac{\alpha}{2}}\theta_0,
\rho_0^{-\frac{\alpha}{2}}b_0\Big)\Big\|_{L^2}^2.
\end{align}
\end{lemma}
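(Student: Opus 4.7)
The plan is to run parallel singularly weighted $L^2$ energy estimates on the three evolution equations of \eqref{a1}. Concretely, I would multiply \eqref{a1}$_2$ by $\rho_0^{-\alpha}u$, \eqref{a1}$_3$ by $\rho_0^{1-\alpha}\theta$, and \eqref{a1}$_4$ by $\rho_0^{-\alpha}b$, integrate over $\mathbb{R}^3$, and use the continuity equation $\rho_t+\divv(\rho u)=0$ to recast the convective parts in conservative form. After rearrangement the three time derivatives assemble into $\frac{d}{dt}\int\rho\rho_0^{-\alpha}|u|^2\,dx$, $\frac{c_v}{2}\frac{d}{dt}\int\rho\rho_0^{1-\alpha}\theta^2\,dx$, and $\frac12\frac{d}{dt}\int\rho_0^{-\alpha}|b|^2\,dx$; by Lemma \ref{l21} these are equivalent, up to the multiplicative factor $e^{\pm C_*\Phi_T}$, to the time derivatives of the three squared norms on the left side of \eqref{2.2}. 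Integration by parts on the diffusive operators produces precisely the coercive dissipations $\mu\int\rho_0^{-\alpha}|\nabla u|^2$, $\kappa\int\rho_0^{1-\alpha}|\nabla\theta|^2$, and $\nu\int\rho_0^{-\alpha}|\nabla b|^2$ required by the right-hand integrand of \eqref{2.2}.

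\textbf{Weight commutators via \eqref{h1}.} All integrations by parts generate remainders carrying $\nabla\rho_0^{-\alpha}$ or $\nabla\rho_0^{1-\alpha}$, and this is where the assumption \eqref{h1} is crucial: it yields the pointwise bound $|\nabla\rho_0^{\beta}|\le|\beta|K_1\rho_0^{\beta+\frac12}$, so a typical such remainder is dominated via Cauchy--Schwarz by
\begin{align*}
\Big|\int\nabla\rho_0^{-\alpha}\cdot u\,|\nabla u|\,dx\Big|\le C\|\rho_0^{-\alpha/2}\nabla u\|_{L^2}\|\rho_0^{(1-\alpha)/2}u\|_{L^2},
\end{align*}
and then absorbed into the dissipation by Young's inequality. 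The commutators created by using the continuity equation to expose the time derivative, e.g.\ $\int\rho u\cdot\nabla\rho_0^{-\alpha}|u|^2\,dx$, are bounded using $\rho\sim\rho_0$ from Lemma \ref{l21} and \eqref{h1} by $C\|\sqrt{\rho_0}u\|_{L^\infty}\|\rho_0^{(1-\alpha)/2}u\|_{L^2}^2\le C\phi(t)^{1/2}\|\rho_0^{(1-\alpha)/2}u\|_{L^2}^2$; the $\theta$- and $b$-analogues are handled identically.

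\textbf{MHD couplings and Gronwall closure.} The delicate step is the cross-coupling between the velocity and the magnetic field: the Lorentz force $b\cdot\nabla b-\frac12\nabla|b|^2$ tested against $\rho_0^{-\alpha}u$, and the induction nonlinearity $-b\cdot\nabla u+u\cdot\nabla b+b\divv u$ tested against $\rho_0^{-\alpha}b$. Integrating by parts carefully with $\divv b=0$, the leading $\rho_0^{-\alpha}$-weighted contributions cancel exactly (the classical MHD energy identity), leaving only low-order remainders with $\nabla\rho_0^{-\alpha}$, controllable via \eqref{h1} and $\|b\|_{L^\infty}\le\phi(t)^{1/2}$ by $C\phi(t)^{1/2}\bigl(\|\rho_0^{(1-\alpha)/2}u\|_{L^2}^2+\|\rho_0^{-\alpha/2}b\|_{L^2}^2\bigr)$. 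The pressure work $\int\rho_0^{1-\alpha}\theta\,p\divv u\,dx$ and the heating source $\int\rho_0^{1-\alpha}\theta\bigl(\mathcal Q(\nabla u)+\nu|\curl b|^2\bigr)dx$ are handled by extracting one $L^\infty$ factor ($\|\nabla u\|_{L^\infty}$ or $\|\nabla b\|_{L^\infty}$, each $\le\phi^{1/2}$) and Cauchy--Schwarz'ing the remaining two-factor integral so that the dissipative factor is absorbed by Young, leaving behind a contribution of the form $C\phi(t)\|\rho_0^{1-\alpha/2}\theta\|_{L^2}^2$. After summation every non-dissipative term is dominated, yielding the Gronwall inequality
\begin{align*}
\frac{d}{dt}\mathcal E(t)+\tfrac12\mathcal D(t)\le C\phi(t)\,\mathcal E(t),
\end{align*}
with $\mathcal E=\|\rho_0^{(1-\alpha)/2}u\|_{L^2}^2+\|\rho_0^{1-\alpha/2}\theta\|_{L^2}^2+\|\rho_0^{-\alpha/2}b\|_{L^2}^2$ and $\mathcal D$ the sum of the three dissipations; since $\Phi_T=\int_0^T\phi(t)\,dt<\infty$, Gronwall's lemma delivers \eqref{2.2}. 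The main obstacle throughout is that the non-constant weight $\rho_0^{-\alpha}$ produces a commutator at every integration by parts, and the argument closes only because \eqref{h1} is precisely the structural hypothesis needed to dominate each commutator by Young's inequality against the dissipation and the target energy.
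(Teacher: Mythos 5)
Your strategy—weighted energy estimates on all three evolution equations, commutators controlled via \eqref{h1}, closure by Gronwall—is the same as the paper's, and the structural cancellation you observe is a genuine improvement in bookkeeping: by integrating the Lorentz force and the induction nonlinearity by parts against the weighted $u$ and $b$ respectively, you exploit the exact MHD energy identity and are left with pure $\nabla\rho_0^{-\alpha}$ commutators controlled by $\|\sqrt{\rho_0}u\|_{L^\infty}$. The paper does not use this cancellation; it estimates every coupling term separately, which forces it to invoke $\|b\|_{L^\infty}^2\le\phi(t)$ (see the treatment of $I_5$, $I_6$). Your route is cleaner and avoids that $L^\infty$ factor, though since $\phi(t)$ already contains $\|b\|_{L^\infty}^2$ this does not change the shape of the final constant.

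However, there is a genuine gap at the very first step, and it is not cosmetic for this class of problems. You propose to multiply by $\rho_0^{-\alpha}u$, $\rho_0^{1-\alpha}\theta$, $\rho_0^{-\alpha}b$ and integrate directly over $\mathbb{R}^3$. The whole point of the lemma is that $\rho_0$ decays to zero at infinity, so $\rho_0^{-\alpha}$ is unbounded there; the multipliers are therefore not a priori admissible test functions, the $L^2$-weighted quantities on the left of \eqref{2.2} are not yet known to be finite (their finiteness is precisely the conclusion), and the integrations by parts that you invoke have uncontrolled boundary contributions at infinity. The paper circumvents this by the double regularization $\rho_{0\delta}=\rho_0+\delta$, which renders $\rho_{0\delta}^{-\alpha}$ bounded, together with the compactly supported cut-offs $\chi_R^2$, which justify every integration by parts on a bounded domain; the price is the extra remainder terms of order $(R\delta^\alpha)^{-1}$ and $\delta^{-\alpha}\|\nabla u\|_{L^2(\mathcal{C}_R)}$ visible in \eqref{2.4}--\eqref{2.18}, which are then killed by sending $R\to\infty$ and $\delta\to 0$. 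Without this mechanism, the formal computation you describe is not a proof. Your argument would become correct once you insert the $\rho_{0\delta}$/$\chi_R$ regularization and verify (as the paper does, using the regularity of strong solutions) that the resulting error terms vanish in the limit.

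A smaller imprecision: you write that by Lemma \ref{l21} the time derivatives $\frac{d}{dt}\int\rho\rho_0^{-\alpha}|u|^2\,dx$ etc. are ``equivalent'' to the time derivatives of the target norms. That is not quite right pointwise in $t$ because $\rho$ is time-dependent; the equivalence $e^{-C_*\Phi_T}\rho_0\le\rho\le e^{C_*\Phi_T}\rho_0$ is applied after integrating the Gronwall differential inequality in time, not to the derivatives themselves. The paper's derivation in steps 1--3 keeps $\rho$ inside the time derivative and only passes to $\rho_0$ at the end, which is the correct order of operations.
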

\begin{proof}[Proof]
1. For any fixed $0<\delta<1$, set $\rho_{0\delta}=\rho_0+\delta$. Choose a nonnegative cut off function $\chi\in C_0^\infty(B_2)$
satisfying $\chi\equiv1$ on $B_1$ and set $\chi_R(x)=\chi(\frac{x}{R})$ for $R>0$. For $\alpha>0$, multiplying $\eqref{a1}_2$ by $\rho_{0\delta}^{-\alpha}u\chi_R^2$ and integrating the resulting equation over $\mathbb{R}^3$ yield that
\begin{align}\label{2.3}
&\frac12\frac{d}{dt}\int\rho|u|^2\rho_{0\delta}^{-\alpha}\chi_R^2dx+\mu\int|\nabla u|^2\rho_{0\delta}^{-\alpha}\chi_R^2dx
+(\mu+\lambda)\int(\divv u)^2\rho_{0\delta}^{-\alpha}\chi_R^2dx\nonumber\\
&=\frac{1}{2}\int\rho u\cdot\nabla\big(\rho_{0\delta}^{-\alpha}\chi_R^2\big)|u|^2dx
-\mu\int\partial_iu\cdot u\partial_i\big(\rho_{0\delta}^{-\alpha}\chi_R^2\big)dx
\nonumber\\
&\quad-(\mu+\lambda)\int\divv u u\cdot\nabla\big(\rho_{0\delta}^{-\alpha}\chi_R^2\big)dx-\int\nabla p\cdot\rho_{0\delta}^{-\alpha}u\chi_R^2dx\nonumber\\
&\quad+\int (b\cdot\nabla)b\cdot\rho_{0\delta}^{-\alpha}u\chi_R^2dx
-\frac12\int\nabla|b|^2\cdot\rho_{0\delta}^{-\alpha}u\chi_R^2dx
\triangleq\sum_{i=1}^6I_i.
\end{align}
Owing to
\begin{align}\label{bb}
|\nabla(\rho_{0\delta}^{-\alpha}\chi_R^2)|&=\Big|-\alpha\rho_{0\delta}^{-(\alpha+1)}
\nabla\rho_{0\delta}\chi_R^2+2\rho_{0\delta}^{-\alpha}\chi_R\nabla\chi_R\Big|\nonumber\\
&\le C\Big(\rho_{0\delta}^{-(\alpha+1)}\rho_0^\frac32\chi_R^2+(R\delta^\alpha)^{-1}\chi_R1_{\mathcal{C}_R}\Big)\nonumber\\
&\le C\big(\rho_{0\delta}^{-\alpha}\sqrt{\rho_0}\chi_R^2
+(R\delta^\alpha)^{-1}\chi_R1_{\mathcal{C}_R}\big),
\end{align}
where $1_{\mathcal C_R}$ is the characteristic function of the set $\mathcal C_R\triangleq B_{2R}\backslash B_R$, we derive from \eqref{h1} that
\begin{align}\label{2.4}
I_1&\le C\int\rho|u|^3\big(\rho_{0\delta}^{-\alpha}\sqrt{\rho_0}\chi_R^2
+(R\delta^\alpha)^{-1}\chi_R1_{\mathcal{C}_R}\big)dx\nonumber\\
&\le C\|\sqrt{\rho_0}u\|_{L^\infty}\|\sqrt{\rho}\rho_{0\delta}^{-\frac{\alpha}{2}}u\chi_R\|_{L^2}^2
+C(R\delta^\alpha)^{-1}\|u\|_{L^\infty}\|\sqrt{\rho}u\|_{L^2}.
\end{align}
It follows from \eqref{bb}, \eqref{h1}, H\"older's inequality, and Cauchy-Schwarz inequality that
\begin{align}
I_2&\le C\int|u||\nabla u|\big(\rho_{0\delta}^{-\alpha}\sqrt{\rho_0}\chi_R^2
+(R\delta^\alpha)^{-1}\chi_R1_{\mathcal{C}_R}\big)dx\nonumber\\
&\le C\|\rho_{0\delta}^{-\frac{\alpha}{2}}\nabla u\chi_R\|_{L^2}\|\sqrt{\rho_0}\rho_{0\delta}^{-\frac{\alpha}{2}}u\chi_R\|_{L^2}
+C(R\delta^\alpha)^{-1}\|u\|_{L^6}\|\nabla u\|_{L^2(\mathcal C_R)}\|\chi_R\|_{L^3}\nonumber\\
&\le \frac{\mu}{8}\|\rho_{0\delta}^{-\frac{\alpha}{2}}\nabla u\chi_R\|_{L^2}^2
+C\|\sqrt{\rho_0}\rho_{0\delta}^{-\frac{\alpha}{2}}u\chi_R\|_{L^2}^2
+C\delta^{-\alpha}\|\nabla u\|_{L^2}\|\nabla u\|_{L^2(\mathcal C_R)},
\end{align}
due to $\|\chi_R\|_{L^3}\le CR$. Similarly, one deduces that
\begin{align}
I_3&\le \frac{\mu+\lambda}{6}\|\rho_{0\delta}^{-\frac{\alpha}{2}}\divv u\chi_R\|_{L^2}^2
+C\|\sqrt{\rho_0}\rho_{0\delta}^{-\frac{\alpha}{2}}u\chi_R\|_{L^2}^2
+C\delta^{-\alpha}\|\nabla u\|_{L^2}\|\nabla u\|_{L^2(\mathcal C_R)}.
\end{align}
By virtue of \eqref{bb} and $\divv b=0$, we infer from integration by parts and Cauchy-Schwarz inequality that
\begin{align}
I_4&=\int p\big(\divv u\rho_{0\delta}^{-\alpha}\chi_R^2
+u\cdot\nabla(\rho_{0\delta}^{-\alpha}\chi_R^2)\big)dx\nonumber\\
&\le C\int\rho\theta\big[|\divv u|\rho_{0\delta}^{-\alpha}\chi_R^2
+|u|\big(\rho_{0\delta}^{-\alpha}\sqrt{\rho_0}\chi_R^2
+(R\delta^\alpha)^{-1}\chi_R\big)\big]dx\nonumber\\
&\le \frac{\mu+\lambda}{6}\|\divv u\rho_{0\delta}^{-\frac{\alpha}{2}}\chi_R\|_{L^2}^2
+C\Big(\|\rho\rho_{0\delta}^{-\frac{\alpha}{2}}\theta\chi_R\|_{L^2}^2
+\|\sqrt{\rho}\rho_{0\delta}^{-\frac{\alpha}{2}}u\chi_R\|_{L^2}^2\Big)\nonumber\\
&\quad+C(R\delta^\alpha)^{-1}(\|\sqrt{\rho}u\|_{L^2}^2
+\|\sqrt{\rho}\theta\|_{L^2}^2), \\
%\end{align}
%By , we obtain after a direct calculation that
%\begin{align}\label{2.9}
I_5&=\int b^i\partial_ib^ju^j\rho_{0\delta}^{-\alpha}\chi_R^2dx\nonumber\\
&=-\int\partial_ib^iu^j\rho_{0\delta}^{-\alpha}\chi_R^2dx-\int b^i\partial_iu^jb^j\rho_{0\delta}^{-\alpha}\chi_R^2dx
-\int b^ib^ju^j\partial_i\big(\rho_{0\delta}^{-\alpha}\chi_R^2\big)dx\nonumber\\
&=-\int b^i\partial_iu^jb^j\rho_{0\delta}^{-\alpha}\chi_R^2dx
-\int b^ib^ju^j\partial_i\big(\rho_{0\delta}^{-\alpha}\chi_R^2\big)dx\nonumber\\
&\le C\|\rho_{0\delta}^{-\frac{\alpha}{2}}\nabla u\chi_R\|_{L^2}\|\rho_{0\delta}^{-\frac{\alpha}{2}}b\chi_R\|_{L^2}
+C\int|b|^2|u|\big(\rho_{0\delta}^{-\alpha}\sqrt{\rho_0}\chi_R^2
+(R\delta^\alpha)^{-1}\chi_R\big)dx\nonumber\\
&\le \frac{\mu}{8}\|\rho_{0\delta}^{-\frac{\alpha}{2}}\nabla u\chi_R\|_{L^2}^2
+C\big(1+\|\sqrt{\rho_0}u\|_{L^\infty}\big)\|\rho_{0\delta}^{-\frac{\alpha}{2}}b\chi_R\|_{L^2}^2
+C(R\delta^\alpha)^{-1}\|u\|_{L^\infty}\|b\|_{L^2}^2\nonumber\\
&\le \frac{\mu}{8}\|\rho_{0\delta}^{-\frac{\alpha}{2}}\nabla u\chi_R\|_{L^2}^2
+C\phi(t)\|\rho_{0\delta}^{-\frac{\alpha}{2}}b\chi_R\|_{L^2}^2
+C(R\delta^\alpha)^{-1}\|u\|_{L^\infty}\|b\|_{L^2}^2,\\
%\end{align}
%Similarly to \eqref{2.9}, one can also derive that
%\begin{align}
I_6&=\frac{1}{2}\int|b|^2\divv u\rho_{0\delta}^{-\alpha}\chi_R^2dx
+\frac12\int|b|^2u\cdot\nabla\big(\rho_{0\delta}^{-\alpha}\chi_R^2\big)dx\nonumber\\
&\le \frac{1}{2}\int|b|^2\divv u\rho_{0\delta}^{-\alpha}\chi_R^2dx
+C\|\sqrt{\rho_0}u\|_{L^\infty}\|\rho_{0\delta}^{-\frac{\alpha}{2}}b\chi_R\|_{L^2}^2
+C\delta^{-\alpha}\|b\|_{L^\infty}\|\nabla u\|_{L^2}
\|b\|_{L^2(\mathcal C_R)}\nonumber\\
&\le \frac{\mu+\lambda}{6}\|\divv u\rho_{0\delta}^{-\frac{\alpha}{2}}\chi_R\|_{L^2}^2
+C\phi(t)\|\rho_{0\delta}^{-\frac{\alpha}{2}}b\chi_R\|_{L^2}^2
+C(R\delta^\alpha)^{-1}\|u\|_{L^\infty}\|b\|_{L^2}^2. \label{2.10}
\end{align}
Putting \eqref{2.4}--\eqref{2.10} into \eqref{2.3} and applying Lemma \ref{l21}, we arrive at
\begin{align}\label{2.11}
&\frac12\frac{d}{dt}\int\rho|u|^2\rho_{0\delta}^{-\alpha}\chi_R^2dx+\frac{3\mu}{4}\int|\nabla u|^2\rho_{0\delta}^{-\alpha}\chi_R^2dx
+\frac{\mu+\lambda}{2}\int(\divv u)^2\rho_{0\delta}^{-\alpha}\chi_R^2dx\nonumber\\
&\le C\phi(t)\Big(\|\rho\rho_{0\delta}^{-\frac{\alpha}{2}}\theta\chi_R\|_{L^2}^2
+\|\sqrt{\rho}\rho_{0\delta}^{-\frac{\alpha}{2}}u\chi_R\|_{L^2}^2
+\|\rho_{0\delta}^{-\frac{\alpha}{2}}b\chi_R\|_{L^2}^2\Big)\nonumber\\
&\quad+C(R\delta^\alpha)^{-1}\big(\|\sqrt{\rho}u\|_{L^2}^2+\|\sqrt{\rho}\theta\|_{L^2}^2
+\|u\|_{L^\infty}\|\sqrt{\rho}u\|_{L^2}^2+\|u\|_{L^\infty}\|b\|_{L^2}^2\big)\nonumber\\
&\quad+C\delta^{-\alpha}\|\nabla u\|_{L^2}\|\nabla u\|_{L^2(\mathcal C_R)}.
\end{align}

2. Multiplying $\eqref{a1}_4$ by $\rho_{0\delta}^{-\alpha}b\chi_R^2$ and integrating the resultant over $\mathbb{R}^3$, we have
\begin{align}\label{2.12}
&\frac12\frac{d}{dt}\int|b|^2\rho_{0\delta}^{-\alpha}\chi_R^2dx+\nu\int|\nabla b|^2\rho_{0\delta}^{-\alpha}\chi_R^2dx\nonumber\\
&=-\nu\int\partial_ib\cdot b\partial_i\big(\rho_{0\delta}^{-\alpha}\chi_R^2\big)dx
+\int (b\cdot\nabla) u\cdot\rho_{0\delta}^{-\alpha}b\chi_R^2dx\nonumber\\
&\quad-\int (u\cdot\nabla) b\cdot\rho_{0\delta}^{-\alpha}b\chi_R^2dx-\int \divv u b\cdot\rho_{0\delta}^{-\alpha}b\chi_R^2dx\nonumber\\
& \triangleq J_1+\int (b\cdot\nabla) u\cdot\rho_{0\delta}^{-\alpha}b\chi_R^2dx
+J_2-\int\divv u|b|^2\rho_{0\delta}^{-\alpha}\chi_R^2dx.
\end{align}
By \eqref{bb} and H{\"o}lder's inequality, one gets that
\begin{align}\label{2.13}
J_1&\le C\int|b||\nabla b|\big(\rho_{0\delta}^{-\alpha}\sqrt{\rho_0}\chi_R^2+(R\delta^\alpha)^{-1}
\chi_R1_{\mathcal{C}_R}\big)dx\nonumber\\
&\le C\|\sqrt{\rho_0}\|_{L^\infty}\|\rho_{0\delta}^{-\frac{\alpha}{2}}b\chi_R\|_{L^2}
\|\rho_{0\delta}^{-\frac{\alpha}{2}}\nabla b\chi_R\|_{L^2}
+C(R\delta^\alpha)^{-1}\|\nabla b\|_{L^2(\mathcal C_R)}\|b\|_{L^6}\|\chi_R\|_{L^3}\nonumber\\
&\le \frac{\nu}{2}\|\rho_{0\delta}^{-\frac{\alpha}{2}}\nabla b\chi_R\|_{L^2}^2+C\|\rho_{0\delta}^{-\frac{\alpha}{2}}b\chi_R\|_{L^2}^2
+C\delta^{-\alpha}\|\nabla b\|_{L^2}\|\nabla b\|_{L^2(\mathcal C_R)}.
\end{align}
Due to $\divv b=0$, we obtain from integration by parts that
\begin{align*}
-\int (u\cdot \nabla) b\cdot b\rho_{0\delta}^{-\alpha}\chi_R^2dx
=\int\divv u|b|^2\rho_{0\delta}^{-\alpha}\chi_R^2dx
+\int (u\cdot\nabla)b\cdot b\rho_{0\delta}^{-\alpha}\chi_R^2dx
+\int|b|^2 u\cdot\nabla\big(\rho_{0\delta}^{-\alpha}\chi_R^2\big)dx,
\end{align*}
which combined with \eqref{bb} yields that
\begin{align}\label{2.14}
J_2&=\frac12\int\divv u|b|^2\rho_{0\delta}^{-\alpha}\chi_R^2dx
+\frac12\int |b|^2 u\cdot\nabla\big(\rho_{0\delta}^{-\alpha}\chi_R^2\big)dx\nonumber\\
&\le \frac12\int\divv u|b|^2\rho_{0\delta}^{-\alpha}\chi_R^2dx
+C\int |u||b|^2\big(\rho_{0\delta}^{-\alpha}\sqrt{\rho_0}\chi_R^2
+(R\delta^\alpha)^{-1}\chi_R1_{\mathcal{C}_R}\big)dx\nonumber\\
&\le \frac12\int\divv u|b|^2\rho_{0\delta}^{-\alpha}\chi_R^2dx
+C\|\sqrt{\rho_0}u\|_{L^\infty}\|\rho_{0\delta}^{-\frac{\alpha}{2}}b\chi_R\|_{L^2}^2
+C(R\delta^\alpha)^{-1}\|u\|_{L^\infty}\|b\|_{L^2}^2.
\end{align}
Inserting \eqref{2.13} and \eqref{2.14} into \eqref{2.12}, we arrive at
\begin{align*}
&\frac12\frac{d}{dt}\int|b|^2\rho_{0\delta}^{-\alpha}\chi_R^2dx+\frac{\nu}{2}\int|\nabla b|^2\rho_{0\delta}^{-\alpha}\chi_R^2dx\nonumber\\
&\le -\frac{1}{2}\int|b|^2\divv u\rho_{0\delta}^{-\alpha}\chi_R^2dx
+C\|\nabla u\|_{L^\infty}\|\rho_{0\delta}^{-\frac{\alpha}{2}}b\chi_R\|_{L^2}^2
+C\delta^{-\alpha}\|\nabla b\|_{L^2}\|\nabla b\|_{L^2(\mathcal C_R)}\nonumber\\
&\quad
+C(1+\|\sqrt{\rho_0}u\|_{L^\infty})\|\rho_{0\delta}^{-\frac{\alpha}{2}}b\chi_R\|_{L^2}^2
+C(R\delta^\alpha)^{-1}\|u\|_{L^\infty}\|b\|_{L^2}^2\nonumber\\
&\le C\phi(t)\|\rho_{0\delta}^{-\frac{\alpha}{2}}b\chi_R\|_{L^2}^2
+C\delta^{-\alpha}\|\nabla b\|_{L^2}\|\nabla b\|_{L^2(\mathcal C_R)}
+C(R\delta^\alpha)^{-1}\|u\|_{L^\infty}\|b\|_{L^2}^2.
\end{align*}
This together with \eqref{2.11} leads to
\begin{align}\label{2.16}
&\frac12\frac{d}{dt}\int\big(\rho|u|^2\rho_{0\delta}^{-\alpha}\chi_R^2
+|b|^2\rho_{0\delta}^{-\alpha}\chi_R^2\big)dx+\mu\int|\nabla u|^2\rho_{0\delta}^{-\alpha}\chi_R^2dx\nonumber\\
&\quad+(\mu+\lambda)\int(\divv u)^2\rho_{0\delta}^{-\alpha}\chi_R^2dx
+\frac{3\nu}{4}\int|\nabla b|^2\rho_{0\delta}^{-\alpha}\chi_R^2dx\nonumber\\
&\le C\phi(t)\Big(\|\rho\rho_{0\delta}^{-\frac{\alpha}{2}}\theta\chi_R\|_{L^2}^2
+\|\sqrt{\rho}\rho_{0\delta}^{-\frac{\alpha}{2}}u\chi_R\|_{L^2}^2
+\|\rho_{0\delta}^{-\frac{\alpha}{2}}b\chi_R\|_{L^2}^2\Big)\nonumber\\
&\quad+C(R\delta^\alpha)^{-1}\big(\|\sqrt{\rho}u\|_{L^2}^2+\|\sqrt{\rho}\theta\|_{L^2}^2
+\|u\|_{L^\infty}\|\sqrt{\rho}u\|_{L^2}^2+\|u\|_{L^\infty}\|b\|_{L^2}^2\big)\nonumber\\
&\quad+C\delta^{-\alpha}\big(\|\nabla u\|_{L^2}\|\nabla u\|_{L^2(\mathcal C_R)}
+\|\nabla b\|_{L^2}\|\nabla b\|_{L^2(\mathcal C_R)}\big).
\end{align}

3. Multiplying $\eqref{a1}_3$ by $\rho_0\rho_{0\delta}^{-\alpha}\theta\chi_R^2$ and integrating the resultant over $\mathbb{R}^3$ lead to
\begin{align}\label{2.17}
&\frac{c_v}{2}\frac{d}{dt}\int\rho\rho_0\rho_{0\delta}^{-\alpha}\theta^2\chi_R^2dx
+\kappa\int\rho_0\rho_{0\delta}^{-\alpha}|\nabla\theta|^2\chi_R^2dx\nonumber\\
&=\frac{c_v}{2}\int\rho u\theta^2\nabla\big(\rho_0\rho_{0\delta}^{-\alpha}\chi_R^2\big)dx
-\int\theta\nabla\theta\cdot\nabla\big(\rho_0\rho_{0\delta}^{-\alpha}\chi_R^2\big)dx
-\int p\divv u\rho_0\rho_{0\delta}^{-\alpha}\theta^2\chi_R^2dx\nonumber\\
&\quad
+\int\mathcal Q(\nabla u)\rho_0\rho_{0\delta}^{-\alpha}\theta^2\chi_R^2dx
+\nu\int|\curl b|^2\rho_0\rho_{0\delta}^{-\alpha}\theta^2\chi_R^2dx
\triangleq\sum_{i=1}^5K_i.
\end{align}
Due to
\begin{align*}
|\nabla(\rho_0\rho_{0\delta}^{-\alpha}\chi_R^2)|&=\Big|\nabla\rho_0\rho_{0\delta}^{-\alpha}\chi_R^2
-\alpha\rho_0\rho_{0\delta}^{-(\alpha+1)}\nabla\rho_0\chi_R^2+2\rho_0\rho_{0\delta}^{-\alpha}\chi_R\nabla\chi_R\Big|\nonumber\\
&\le C\rho_0^\frac32\rho_{0\delta}^{-\alpha}\chi_R^2+CR^{-1}\rho_0\rho_{0\delta}^{-\alpha}\chi_R1_{\mathcal C_R},
\end{align*}
it follows from \eqref{h1} and Cauchy-Schwarz inequality that
\begin{align*}
K_1&\le C\int\rho|u|\theta^2\Big(\rho_0^\frac32\rho_{0\delta}^{-\alpha}\chi_R^2
+\rho_0(R\delta^\alpha)^{-1}\chi_R1_{\mathcal C_R}\Big)dx\nonumber\\
&\le C\|\sqrt{\rho_0}u\|_{L^\infty}\|\sqrt{\rho\rho_0}\rho_{0\delta}^{-\frac{\alpha}{2}}\theta\chi_R\|_{L^2}^2
+C(R\delta^\alpha)^{-1}\|\rho_0u\|_{L^\infty}\|\sqrt{\rho}\theta\|_{L^2}\nonumber\\
&\le C\phi(t)\|\sqrt{\rho\rho_0}\rho_{0\delta}^{-\frac{\alpha}{2}}\theta\chi_R\|_{L^2}^2
+C(R\delta^\alpha)^{-1}\phi(t)\|\sqrt{\rho}\theta\|_{L^2}^2,\\
%\end{align*}
%Thanks to \eqref{2.18}, by , we have
%\begin{align*}
K_2&\le C\int\theta|\nabla\theta|\Big(\rho_0^\frac32\rho_{0\delta}^{-\alpha}\chi_R^2
+\rho_0R^{-1}\rho_0\rho_{0\delta}^{-\alpha}\chi_R1_{\mathcal C_R}\Big)dx\nonumber\\
&\le \frac{\kappa}{2}\|\sqrt{\rho_0}\rho_{0\delta}^{-\frac{\alpha}{2}}\nabla\theta\chi_R\|_{L^2}^2
+C\|\rho_0\rho_{0\delta}^{-\frac{\alpha}{2}}\theta\chi_R\|_{L^2}^2+C(R^2\delta^\alpha)^{-1}\|\sqrt{\rho_0}\theta\|_{L^2}^2,\\
K_3&\le \widetilde{R}\int\rho\theta|\divv u|\rho_0\rho_{0\delta}^{-\alpha}\theta\chi_R^2dx\\
&\le C\|\nabla u\|_{L^\infty}\|\sqrt{\rho\rho_0}\rho_{0\delta}^{-\alpha}\theta\chi_R\|_{L^2}^2\nonumber\\
&\le C\phi(t)\|\sqrt{\rho\rho_0}\rho_{0\delta}^{-\alpha}\theta\chi_R\|_{L^2}^2,\\
K_4&\le C\int|\nabla u|^2\rho_0\rho_{0\delta}^{-\alpha}\theta\chi_R^2dx\nonumber\\
&\le C\|\nabla u\|_{L^\infty}\|\rho_{0\delta}^{-\frac{\alpha}{2}}\nabla u\chi_R\|_{L^2}
\|\rho_0\rho_{0\delta}^{-\frac{\alpha}{2}}\theta\chi_R^2\|_{L^2}\nonumber\\
&\le \frac{\mu}{4}\|\rho_{0\delta}^{-\frac{\alpha}{2}}\nabla u\chi_R\|_{L^2}^2
+C\phi(t)\|\rho_0\rho_{0\delta}^{-\frac{\alpha}{2}}\theta\chi_R^2\|_{L^2}^2,\\
K_5&\le C\int|\nabla b|^2\rho_0\rho_{0\delta}^{-\alpha}\theta\chi_R^2dx\nonumber\\
&\le C\|\nabla u\|_{L^\infty}\|\rho_{0\delta}^{-\frac{\alpha}{2}}\nabla u\chi_R\|_{L^2}
\|\rho_0\rho_{0\delta}^{-\frac{\alpha}{2}}\theta\chi_R^2\|_{L^2}\nonumber\\
&\le \frac{\mu}{4}\|\rho_{0\delta}^{-\frac{\alpha}{2}}\nabla b\chi_R\|_{L^2}^2
+C\phi(t)\|\rho_0\rho_{0\delta}^{-\frac{\alpha}{2}}\theta\chi_R^2\|_{L^2}^2.
\end{align*}
Substituting the above estimates on $K_i\ (i=1, 2, \ldots, 5)$ into \eqref{2.17}, we derive from Lemma \ref{l21} that
\begin{align*}
&\frac{c_v}{2}\frac{d}{dt}\int\rho\rho_0\rho_{0\delta}^{-\alpha}\theta^2\chi_R^2dx
+\frac{\kappa}{2}\int\rho_0\rho_{0\delta}^{-\alpha}|\nabla\theta|^2\chi_R^2dx\nonumber\\
&\le \frac{\mu}{4}\|\rho_{0\delta}^{-\frac{\alpha}{2}}\nabla u\chi_R\|_{L^2}^2
+\frac{\mu}{4}\|\rho_{0\delta}^{-\frac{\alpha}{2}}\nabla b\chi_R\|_{L^2}^2
+C\phi(t)\|\sqrt{\rho\rho_0}\rho_{0\delta}^{-\frac{\alpha}{2}}\theta\chi_R\|_{L^2}^2\nonumber\\
&\quad+C\big((R\delta^\alpha)^{-1}(\phi(t)+(R^2\delta^\alpha)^{-1}\big)
\|\sqrt{\rho}\theta\|_{L^2}^2,
\end{align*}
which along with \eqref{2.16} and Lemma \ref{l21}  gives that
\begin{align}\label{2.18}
&\frac{d}{dt}\Big(\|\sqrt{\rho}\rho_{0\delta}^{-\frac{\alpha}{2}}u\chi_R\|_{L^2}^2
+c_v\|\sqrt{\rho\rho_0}\rho_{0\delta}^{-\frac{\alpha}{2}}\theta\chi_R\|_{L^2}^2
+\|\rho_{0\delta}^{-\frac{\alpha}{2}}b\chi_R\|_{L^2}^2\Big)\nonumber\\
&\quad+\|\rho_{0\delta}^{-\frac{\alpha}{2}}\nabla u\chi_R\|_{L^2}^2
+\|\rho_{0\delta}^{-\frac{\alpha}{2}}\nabla b\chi_R\|_{L^2}^2
+\|\sqrt{\rho_0}\rho_{0\delta}^{-\frac{\alpha}{2}}\nabla\theta\chi_R\|_{L^2}^2\nonumber\\
&\le C\phi(t)\Big(c_v\|\rho\rho_{0\delta}^{-\frac{\alpha}{2}}\theta\chi_R\|_{L^2}^2
+\|\sqrt{\rho}\rho_{0\delta}^{-\frac{\alpha}{2}}u\chi_R\|_{L^2}^2
+\|\rho_{0\delta}^{-\frac{\alpha}{2}}b\chi_R\|_{L^2}^2\Big)\nonumber\\
&\quad+C(R\delta^\alpha)^{-1}\big(\|\sqrt{\rho}u\|_{L^2}^2
+\|u\|_{L^\infty}\|\sqrt{\rho}u\|_{L^2}^2+\|u\|_{L^\infty}\|b\|_{L^2}^2\big)\nonumber\\
&\quad+C\delta^{-\alpha}\big(\|\nabla u\|_{L^2}\|\nabla u\|_{L^2(\mathcal C_R)}
+\|\nabla b\|_{L^2}\|\nabla b\|_{L^2(\mathcal C_R)}\big)\nonumber\\
&\quad+C\big((R\delta^\alpha)^{-1}(\phi(t)+(R^2\delta^\alpha)^{-1}\big)\|\sqrt{\rho}\theta\|_{L^2}^2.
\end{align}
Applying Gronwall's inequality to \eqref{2.18} and noticing that
\begin{align*}
&\int_0^T\big(\|\nabla u\|_{L^2}\|\nabla u\|_{L^2(\mathcal C_R)}
+\|\nabla b\|_{L^2}\|\nabla b\|_{L^2(\mathcal C_R)}\big)dt\rightarrow 0, \quad \text{as}~R\rightarrow \infty, \\
&(1+\|u\|_{L^\infty})\|\sqrt{\rho}u\|_{L^2}^2
+\|u\|_{L^\infty}\|b\|_{L^2}^2+\phi(t)\|\sqrt{\rho}\theta\|_{L^2}^2\in L^1(0, T),
\end{align*}
guaranteed by the regularities of $(\rho, u, \theta, b)$, one obtains after taking $R\rightarrow\infty$ and applying Lemma \ref{l21} that
\begin{align*}
&\sup_{0\le t\le T}\Big(\|\sqrt{\rho_0}\rho_{0\delta}^{-\frac{\alpha}{2}}u\|_{L^2}^2
+c_v\|\rho_0\rho_{0\delta}^{-\frac{\alpha}{2}}\theta\|_{L^2}^2
+\|\rho_{0\delta}^{-\frac{\alpha}{2}}b\|_{L^2}^2\Big)\nonumber\\
&\quad+\int_0^T\Big(\|\rho_{0\delta}^{-\frac{\alpha}{2}}\nabla u\|_{L^2}^2
+\|\rho_{0\delta}^{-\frac{\alpha}{2}}\nabla b\|_{L^2}^2
+\|\sqrt{\rho_0}\rho_{0\delta}^{-\frac{\alpha}{2}}\nabla\theta\|_{L^2}^2\Big)dt
\nonumber\\
&\le C\Big\|\Big(\rho_0^\frac{1-\alpha}{2}u_0,
\rho_0^{1-\frac{\alpha}{2}}\theta_0,
\rho_0^{-\frac{\alpha}{2}}b_0\Big)\Big\|_{L^2}^2.
\end{align*}
Thus, taking $\delta\rightarrow 0$ and using Lebesgue's dominated convergence theorem, one derives \eqref{2.2}.
\end{proof}

\begin{lemma}\label{l23}
There exists a positive constant $C$ depending only on $\mu$, $\lambda$, $\widetilde{R}$, $c_v$, $\kappa$, $\nu$, $K_1$, and $\Phi_T$ such that
\begin{align}
&\sup_{0\le t\le T}\Big\|\Big(\rho_0^{-\frac{\gamma}{2}}\nabla u,
\rho_0^{-\frac{\gamma+2}{2}}\nabla b, \rho_0^{-\frac{\gamma+2}{2}}|b|^2\Big)\Big\|_{L^2}^2
+\int_0^T\Big\|\Big(\rho_0^\frac{1-\gamma}{2}u_t, \rho_0^{-\frac{\gamma+2}{2}}\nabla^2b\Big)\Big\|_{L^2}^2dt\nonumber\\
&\le C\Big\|\Big(\rho_0^\frac{1-\gamma}{2}u_0,
\rho_0^{1-\frac{\gamma}{2}}\theta_0,
\rho_0^{-\frac{\gamma}{2}}b_0, \rho_0^{-\frac{\gamma}{2}}\nabla u_0,
\rho_0^{-\frac{\gamma+2}{2}}\nabla b_0\Big)\Big\|_{L^2}^2.
\end{align}
\end{lemma}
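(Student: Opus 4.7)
\textbf{Proof plan for Lemma \ref{l23}.} The strategy is to combine three singularly-weighted energy identities: one obtained by testing the momentum equation $(1.1)_2$ against $\rho_0^{-\gamma}u_t$, and two obtained by testing the induction equation $(1.1)_4$ against $-\rho_0^{-(\gamma+2)}\Delta b$ and against $\rho_0^{-(\gamma+2)}|b|^2b$. As in Lemma \ref{l22}, all manipulations should be carried out with the regularized weight $\rho_{0\delta}=\rho_0+\delta$ and the spatial cutoff $\chi_R$, and then the limits $R\to\infty$, $\delta\to 0^+$ are taken at the end; I omit these truncations in the sketch. Throughout, gradients hitting the weight $\rho_0^{-\alpha}$ are absorbed by (H1), which gives $|\nabla\rho_0^{-\alpha}|\le C\rho_0^{-\alpha}\sqrt{\rho_0}$.

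\emph{Step 1 (momentum).} Testing $(1.1)_2$ with $\rho_0^{-\gamma}u_t$ yields, after integration by parts,
\begin{equation*}
\tfrac12\tfrac{d}{dt}\!\Bigl(\mu\|\rho_0^{-\gamma/2}\nabla u\|_{L^2}^2+(\mu+\lambda)\|\rho_0^{-\gamma/2}\divv u\|_{L^2}^2\Bigr)+\|\sqrt{\rho}\rho_0^{-\gamma/2}u_t\|_{L^2}^2\le \text{R.H.S.},
\end{equation*}
where the left kinetic term dominates $\|\rho_0^{(1-\gamma)/2}u_t\|_{L^2}^2$ via Lemma \ref{l21}. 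The convective contribution $\rho u\cdot\nabla u\cdot \rho_0^{-\gamma}u_t$ is controlled by $\phi(t)$ and the energies already in $\mathcal E$. The pressure and Lorentz pieces are grouped into the effective viscous flux $F\triangleq (2\mu+\lambda)\divv u-p-\tfrac12|b|^2$; elliptic regularity for the Lam\'e system furnishes
\begin{equation*}
\|\rho_0^{-(\gamma+1)/2}\nabla F\|_{L^2}\le C\bigl(\|\rho_0^{(1-\gamma)/2}\sqrt{\rho}\dot u\|_{L^2}+\|\rho_0^{-(\gamma+2)/2}|b||\nabla b|\|_{L^2}\bigr)+\text{l.o.t.},
\end{equation*}
so that the remaining $b\cdot\nabla b\cdot\rho_0^{-\gamma}u_t$ integral is paired by H\"older against $\|\rho_0^{(1-\gamma)/2}u_t\|_{L^2}$ and $\|\rho_0^{-(\gamma+2)/2}|b||\nabla b|\|_{L^2}$, which will be produced in Steps 2--3.

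\emph{Step 2 (induction, $\Delta b$ test).} I follow the idea of \cite{LSX2016} and avoid a direct $b_t$ singular estimate, since $u\cdot\nabla b$ couples badly with $\rho_0^{-(\gamma+2)}$. Testing $(1.1)_4$ with $-\rho_0^{-(\gamma+2)}\Delta b$ gives
\begin{equation*}
\tfrac12\tfrac{d}{dt}\|\rho_0^{-(\gamma+2)/2}\nabla b\|_{L^2}^2+\nu\|\rho_0^{-(\gamma+2)/2}\Delta b\|_{L^2}^2\le\text{R.H.S.},
\end{equation*}
again modulo (H1)-commutators. The key observation is that
\begin{equation*}
\int (u\cdot\nabla b)\cdot\Delta b\,\rho_0^{-(\gamma+2)}dx
\end{equation*}
can be rearranged by two integrations by parts into terms bounded by $\|\nabla u\|_{L^\infty}\|\rho_0^{-(\gamma+2)/2}\nabla b\|_{L^2}^2$ plus pieces absorbable into the $\Delta b$ dissipation. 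The stretching $b\cdot\nabla u$ and the compression $\divv u\,b$ are estimated by Cauchy--Schwarz together with the $|b|^4$ quantity from Step 3; this closes the $\nabla^2 b$ control via standard elliptic estimates $\|\rho_0^{-(\gamma+2)/2}\nabla^2 b\|_{L^2}\le C\|\rho_0^{-(\gamma+2)/2}\Delta b\|_{L^2}+$l.o.t.

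\emph{Step 3 ($|b|^4$ estimate) and closure.} Testing $(1.1)_4$ against $\rho_0^{-(\gamma+2)}|b|^2b$ and using $\divv b=0$ produces
\begin{equation*}
\tfrac14\tfrac{d}{dt}\|\rho_0^{-(\gamma+2)/4}b\|_{L^4}^4+c\nu\|\rho_0^{-(\gamma+2)/2}|b||\nabla b|\|_{L^2}^2\le C\phi(t)\|\rho_0^{-(\gamma+2)/4}b\|_{L^4}^4+\text{cross terms},
\end{equation*}
where the cross terms from $b\cdot\nabla u$, $\divv u\,b$ and (H1)-commutators are paired with $\|\rho_0^{-\gamma/2}\nabla u\|_{L^2}$ or absorbed. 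Note $\|\rho_0^{-(\gamma+2)/2}|b|^2\|_{L^2}^2=\|\rho_0^{-(\gamma+2)/4}b\|_{L^4}^4$, so this directly controls the target quantity. Summing the three identities with suitable small coefficients, defining
\begin{equation*}
\mathcal E(t)=\|\rho_0^{-\gamma/2}\nabla u\|_{L^2}^2+\|\rho_0^{-(\gamma+2)/2}\nabla b\|_{L^2}^2+\|\rho_0^{-(\gamma+2)/2}|b|^2\|_{L^2}^2,
\end{equation*}
\begin{equation*}
\mathcal D(t)=\|\rho_0^{(1-\gamma)/2}u_t\|_{L^2}^2+\|\rho_0^{-(\gamma+2)/2}\nabla^2 b\|_{L^2}^2,
\end{equation*}
and absorbing the lower-order $L^2$ quantities through Lemma \ref{l22}, I arrive at $\tfrac{d}{dt}\mathcal E+\mathcal D\le C\phi(t)(\mathcal E+\text{l.o.t.})$, and Gronwall together with $\Phi_T<\infty$ delivers the claimed bound.

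\emph{Main obstacle.} The weights on $u$ and $b$ are mismatched (order $\rho_0^{-\gamma/2}$ versus $\rho_0^{-(\gamma+2)/2}$), so the Lorentz coupling $b\cdot\nabla b$ in the momentum equation and the convective/stretching couplings in the induction equation must be distributed across the three estimates so that every pairing lands inside $\mathcal E$ or $\mathcal D$. The decisive trick is the effective viscous flux $F$, whose gradient absorbs the worst cross term $\rho_0^{-(\gamma+1)/2}|b||\nabla b|$, together with the $|b|^4$ identity, which provides the missing $L^\infty_t L^2_x$ control of $\rho_0^{-(\gamma+2)/2}|b|^2$ needed to close the circular dependence between Steps 1 and 2.
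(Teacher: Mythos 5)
Your proposal is correct and follows essentially the same route as the paper: test the momentum equation with $\rho_0^{-\gamma}u_t$ and control the pressure contribution through the effective viscous flux $F=(2\mu+\lambda)\divv u-p-\tfrac12|b|^2$ and the elliptic identity $\Delta F=\divv(\rho\dot u)+\divv(b\otimes b)$; test the induction equation against $\rho_0^{-(\gamma+2)}\Delta b$, rearranging $\int u\cdot\nabla b\cdot\Delta b\,\rho_0^{-(\gamma+2)}$ by parts into $\phi(t)$-controlled terms; test the induction equation again against $\rho_0^{-(\gamma+2)}|b|^2b$ for the weighted $L^4$ control of $b$; and close by Gronwall. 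One small imprecision: in your sketched flux bound the paper actually obtains $\|\rho_0^{-(\gamma+1)/2}\nabla F\|_{L^2}^2\le C\|\sqrt\rho\,\rho_0^{-\gamma/2}u_t\|_{L^2}^2+C\|\rho_0^{-(\gamma+1)/2}|b|^2\|_{L^2}^2+C\phi(t)(\cdots)$, i.e.\ it is $u_t$ (not the full material derivative $\dot u$ with an extra $\sqrt{\rho_0}$) and $|b|^2$ (not $|b||\nabla b|$) that appear — but both are controlled by the same pieces you introduce, so this does not change the structure.
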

\begin{proof}[Proof]
1. Multiplying $\eqref{a1}_2$ by $\rho_0^{-\gamma}u_t$ and integrating the resulting equation over $\mathbb{R}^3$ lead to
\begin{align}\label{2.25}
&\frac12\frac{d}{dt}\int\rho_0^{-\gamma}\big[\mu|\nabla u|^2+(\mu+\lambda)(\divv u)^2\big]dx+\int\rho\rho_0^{-\gamma}|u_t|^2dx\nonumber\\
&=-\int(\nabla\rho_0^{-\gamma}\cdot\nabla)u\cdot u_tdx-\int\divv u u_t\cdot
\nabla\rho_0^{-\gamma}dx
-\int\rho u\cdot\nabla u\cdot\rho_0^{-\gamma}u_tdx\nonumber\\
&\quad-\int\nabla p\rho_0^{-\gamma}u_tdx+\int b\cdot\nabla b\cdot\rho_0^{-\gamma}u_tdx
-\frac{1}{2}\int\nabla|b|^2\cdot\rho_0^{-\gamma}u_tdx
\triangleq\sum_{i=1}^6L_i.
\end{align}
It follows from integrating by parts, \eqref{h1}, Lemma \ref{l21}, and Cauchy-Schwarz inequality that
\begin{align*}
L_1+L_2 & \le C\int|\nabla u||u_t|\rho_0^{-\gamma-1}|\nabla\rho_0|dx \\
& \le C\int\rho_0^{-\frac{\gamma}{2}}|\nabla u|\rho_0^{\frac12-\frac{\gamma}{2}}|u_t|dx
\le \frac{1}{12}\|\rho_0^\frac{1-\gamma}{2}u_t\|_{L^2}^2+C\|\rho_0^{-\frac{\gamma}{2}}\nabla u\|_{L^2}^2.
\end{align*}
Using Lemma \ref{l21} and Cauchy-Schwarz inequality, we find that
\begin{align*}
L_3\le \frac{1}{24}\|\sqrt{\rho}\rho_0^{-\frac{\gamma}{2}}u_t\|_{L^2}^2+C\|\sqrt{\rho_0}u\|_{L^\infty}^2\|\rho_0^{-\frac{\gamma}{2}}\nabla u\|_{L^2}^2
\le \frac{1}{24}\|\sqrt{\rho}\rho_0^{-\frac{\gamma}{2}}u_t\|_{L^2}^2+C\phi(t)\|\rho_0^{-\frac{\gamma}{2}}\nabla u\|_{L^2}^2.
\end{align*}
Due to $\divv b=0$, we obtain from integration by parts and Cauchy-Schwarz inequality that
\begin{align*}
L_5&=\int b^i\partial_ib^j\rho_0^{-\gamma}u_tdx=\int\partial_i(b^ib^j)\rho_0^{-\gamma}u_t^jdx\nonumber\\
&=-\int b^ib^j\partial_i\rho_0^{-\gamma}u_t^jdx-\int b^ib^j\rho_0^{-\gamma}\partial_iu_t^jdx\nonumber\\
&=-\int b^ib^j\partial_i\rho_0^{-\gamma}u_t^jdx+\int b_t^ib^j\rho_0^{-\gamma}\partial_iu^jdx
+\int b^ib_t^j\rho_0^{-\gamma}\partial_iu^jdx\nonumber\\
&=-\int b^ib^j\partial_i\rho_0^{-\gamma}u_t^jdx
+\int(b\cdot\nabla u-u\cdot\nabla b+b\divv u+\nu\Delta b)\otimes b:\rho_0^{-\gamma}\nabla udx\nonumber\\
&\quad+\int b\otimes(b\cdot\nabla u-u\cdot\nabla b+b\divv u+\nu\Delta b):\rho_0^{-\gamma}\nabla udx\nonumber\\
&\le C\int|b|^2\rho_0^{-\gamma}\sqrt{\rho_0}u_tdx+C\int|b|^2|\nabla u|^2\rho_0^{-\gamma}dx
+C\int\sqrt{\rho_0}|u||\nabla u||b||\nabla b|\rho_0^{-\gamma-\frac12}dx\nonumber\\
&\le C\|b\|_{L^\infty}\|\rho_0^{-\frac{\gamma}{2}}b\|_{L^2}\|\sqrt{\rho_0}\rho_0^{-\frac{\gamma}{2}}u_t\|_{L^2}
+C\|b\|_{L^\infty}\|\rho_0^{-\frac{\gamma}{2}}b\|_{L^2}\|\rho_0^{-\frac{\gamma}{2}}\nabla u\|_{L^2}\nonumber\\
&\quad+C\|\sqrt{\rho_0}u\|_{L^\infty}\|\rho^{-\frac{\gamma}{2}}\nabla u\|_{L^2}\||\nabla b||b|\rho^{-\frac{\gamma+1}{2}}\|_{L^2}\nonumber\\
&\le \frac{1}{24}\|\sqrt{\rho_0}\rho_0^{-\frac{\gamma}{2}}u_t\|_{L^2}^2
+C\|b\|_{L^\infty}^2\|\rho_0^{-\frac{\gamma}{2}}b\|_{L^2}^2
+C(1+\|\sqrt{\rho_0}u\|_{L^\infty}^2)\|\rho_0^{-\frac{\gamma}{2}}\nabla u\|_{L^2}^2\nonumber\\
&\quad+C\|\rho^{-\frac{\gamma+1}{2}}|\nabla b||b|\|_{L^2}^2\nonumber\\
&\le \frac{1}{24}\|\sqrt{\rho_0}\rho_0^{-\frac{\gamma}{2}}u_t\|_{L^2}^2
+C\phi(t)\Big(\|\rho_0^{-\frac{\gamma}{2}}b\|_{L^2}^2+\|\rho_0^{-\frac{\gamma}{2}}\nabla u\|_{L^2}^2\Big)
+C\|\rho^{-\frac{\gamma+1}{2}}|\nabla b||b|\|_{L^2}^2.
\end{align*}
Similarly, we have
\begin{align*}
L_6\le \frac{1}{24}\|\sqrt{\rho_0}\rho_0^{-\frac{\gamma}{2}}u_t\|_{L^2}^2
+C\phi(t)\Big(\|\rho_0^{-\frac{\gamma}{2}}b\|_{L^2}^2+\|\rho_0^{-\frac{\gamma}{2}}\nabla u\|_{L^2}^2\Big)
+C\|\rho^{-\frac{\gamma+1}{2}}|\nabla b||b|\|_{L^2}^2.
\end{align*}
For the estimate on $L_4$. Let $F$ be the effective viscous flux, i.e.,
\begin{align}\label{z2.31}
F=(2\mu+\lambda)\divv u-p-\frac12|b|^2.
\end{align}
It follows from $\eqref{a1}_1$, $\eqref{a1}_3$, and the equation of state that
\begin{align*}
-p_t=\divv(up-\kappa(\gamma-1)\nabla\theta)+(\gamma-1)\big(\divv up-\mathcal Q(\nabla u)-|\curl b|^2\big).
\end{align*}
This together with \eqref{z2.31}, integration by parts, and Lemma \ref{l21} gives that
\begin{align*}
L_4&=\int p(\nabla\rho_0^{-\gamma}\cdot u_t+\rho_0^{-\gamma}\divv u_t)dx\nonumber\\
&=\frac{d}{dt}\int\rho_0^{-\gamma}\divv updx+\int(p\nabla\rho_0^{-\gamma}\cdot u_t-\rho_0^{-\gamma}\divv u p_t)dx\nonumber\\
&=\frac{d}{dt}\int\Big(\rho_0^{-\gamma}\divv up-\frac{1}{2(2\mu+\lambda)}\rho_0^{-\gamma}p^2\Big)dx
+\int p\nabla\rho_0^{-\gamma}\cdot u_tdx\nonumber\\
&\quad-\frac{1}{2\mu+\lambda}\int\rho_0^{-\gamma}Fp_tdx-\frac{1}{2(2\mu+\lambda)}\int\rho_0^{-\gamma}|b|^2p_tdx\nonumber\\
&=\frac{d}{dt}\int\Big(\rho_0^{-\gamma}\divv up-\frac{1}{2(2\mu+\lambda)}\rho_0^{-\gamma}p^2\Big)dx
+\int p\nabla\rho_0^{-\gamma}\cdot u_tdx\nonumber\\
&\quad+\frac{1}{2\mu+\lambda}\int\rho_0^{-\gamma}\Big(F+\frac12|b|^2\Big)\divv(up-k(\gamma-1)\nabla\theta)dx\nonumber\\
&\quad+\frac{\gamma-1}{2\mu+\lambda}\int\rho_0^{-\gamma}\Big(F+\frac12|b|^2\Big)(\divv up-\mathcal Q(\nabla u)-|\curl b|^2)dx\nonumber\\
&=\frac{d}{dt}\int\Big(\rho_0^{-\gamma}\divv up-\frac{1}{2(2\mu+\lambda)}\rho_0^{-\gamma}p^2\Big)dx
+\int p\nabla\rho_0^{-\gamma}\cdot u_tdx\nonumber\\
&\quad+\int((\gamma-1)\kappa\nabla\theta-up)\cdot\Big(\rho_0^{-\gamma}
(\nabla F+\frac12\nabla|b|^2)+\nabla\rho_0^{-\gamma}(F+\frac12|b|^2)\Big)dx\nonumber\\
&\quad+\frac{\gamma-1}{2\mu+\lambda}\int\rho_0^{-\gamma}((2\mu+\lambda)\divv u-p)\big(\divv up-\mathcal Q(\nabla u)-|\curl b|^2\big)dx\nonumber\\
&\le \frac{d}{dt}\int\Big(\rho_0^{-\gamma}\divv up-\frac{1}{2(2\mu+\lambda)}\rho_0^{-\gamma}p^2\Big)dx
+C\int\rho\theta\rho_0^{-\gamma}\sqrt{\rho_0}|u_t|dx\nonumber\\
&\quad+C\int\rho_0^{-\gamma}|\nabla F||\nabla\theta|dx
+C\int\rho_0^{-\gamma}\rho\theta|u||\nabla F|dx+C\int\rho_0^{-\gamma}|b||\nabla b||\nabla\theta|dx\nonumber\\
&\quad+C\int\rho_0^{-\gamma}\rho\theta|u||b||\nabla b|dx
+C\int\rho_0^{-\gamma}|\nabla u|^2\rho\theta dx+C\int\rho_0^{-\gamma}|\nabla u|^3dx\nonumber\\
&\quad+C\int\rho_0^{-\gamma}|\nabla u||\nabla b|^2dx+C\int\rho_0^{-\gamma}\rho^2\theta^2|\nabla u|dx
+C\int\rho_0^{-\gamma}\rho\theta|\nabla u|^2dx
+C\int\rho_0^{-\gamma}\rho\theta|\nabla b|^2dx\nonumber\\
&\le  \frac{d}{dt}\int\Big(\rho_0^{-\gamma}\divv up-\frac{1}{2(2\mu+\lambda)}\rho_0^{-\gamma}p^2\Big)dx
\nonumber\\
&\quad+\frac{1}{24}\|\sqrt{\rho}\rho_0^{-\frac{\gamma}{2}}u_t\|_{L^2}^2+C\|\rho_0^{1-\frac{\gamma}{2}}\theta\|_{L^2}^2
+\eta\|\rho_0^{-\frac{\gamma+1}{2}}\nabla F\|_{L^2}^2+C\|\rho_0^\frac{1-\gamma}{2}\nabla\theta\|_{L^2}^2\nonumber\\
&\quad+C\|\sqrt{\rho_0}u\|_{L^\infty}^2\|\rho\rho_0^{-\frac{\gamma}{2}}\theta\|_{L^2}^2
+C\|\rho_0^{-\frac{\gamma+1}{2}}|b||\nabla b|\|_{L^2}^2+C\|\nabla b\|_{L^\infty}^2\|\rho_0^{-\frac{\gamma}{2}}\nabla b\|_{L^2}^2\nonumber\\
&\quad+C(1+\|\nabla u\|_{L^\infty}^2)\|\rho_0^{-\frac{\gamma}{2}}\nabla u\|_{L^2}^2
+C\|\nabla u\|_{L^\infty}\|\rho\rho_0^{-\frac{\gamma}{2}}\theta\|_{L^2}^2\nonumber\\
&\le \frac{d}{dt}\int\Big(\rho_0^{-\gamma}\divv up-\frac{1}{2(2\mu+\lambda)}\rho_0^{-\gamma}p^2\Big)dx
+C\|\rho_0^\frac{1-\gamma}{2}\nabla\theta\|_{L^2}^2\nonumber\\
&\quad+\frac{1}{24}\|\sqrt{\rho}\rho_0^{-\frac{\gamma}{2}}u_t\|_{L^2}^2+\eta\|\rho_0^{-\frac{\gamma+1}{2}}\nabla F\|_{L^2}^2
+C\phi(t)\|\rho_0^{1-\frac{\gamma}{2}}\theta\|_{L^2}^2\nonumber\\
&\quad+C\phi(t)(\|\rho_0^{-\frac{\gamma}{2}}\nabla u\|_{L^2}^2
+\|\rho_0^{-\frac{\gamma}{2}}\nabla b\|_{L^2}^2)+C\|\rho_0^{-\frac{\gamma+1}{2}}|b||\nabla b|\|_{L^2}^2.
\end{align*}
Putting the above estimates on $L_i\ (i=1, 2,\cdots, 6)$ into \eqref{2.25} and using Lemma \ref{l21}, one obtains that, for any positive $\eta$,
\begin{align}\label{2.27}
&\frac{d}{dt}\Big(\mu\|\rho_0^{-\frac{\gamma}{2}}\nabla u\|_{L^2}^2
+(\mu+\lambda)\|\rho_0^{-\frac{\gamma}{2}}\divv u\|_{L^2}^2
+\frac{1}{2\mu+\lambda}\|\rho_0^{-\frac{\gamma}{2}}p\|_{L^2}^2\Big)\nonumber\\
&\quad-2\frac{d}{dt}\int\rho_0^{-\gamma}\divv updx+\frac32\|\sqrt{\rho}\rho_0^{-\frac{\gamma}{2}}u_t\|_{L^2}^2\nonumber\\
&\le 2\eta\|\rho_0^{-\frac{\gamma+1}{2}}\nabla F\|_{L^2}^2
+C\phi(t)\Big(\|\rho_0^{1-\frac{\gamma}{2}}\theta\|_{L^2}^2
+\|\rho_0^{-\frac{\gamma}{2}}\nabla u\|_{L^2}^2
+\|\rho_0^{-\frac{\gamma}{2}}\nabla b\|_{L^2}^2\Big)\nonumber\\
&\quad+C\phi(t)\|\rho_0^{-\frac{\gamma}{2}}b\|_{L^2}^2+C\|\rho_0^\frac{1-\gamma}{2}\nabla\theta\|_{L^2}^2
+C\|\rho^{-\frac{\gamma+1}{2}}|\nabla b||b|\|_{L^2}^2.
\end{align}

2. Multiplying $\eqref{a1}_4$ by $\rho_0^{-(\gamma+2)}\Delta b$ and integrating the resultant over $\mathbb{R}^3$ lead to
\begin{align}\label{2.28}
&\frac12\frac{d}{dt}\int|\nabla b|^2\rho_0^{-(\gamma+2)}dx+\nu\int|\Delta b|^2\rho_0^{-(\gamma+2)}dx\nonumber\\
&=\int(u\cdot\nabla b+b\divv u-b\cdot\nabla u)\cdot\Delta b\rho_0^{-(\gamma+2)}dx
-\int b_t\partial_ib\partial_i\rho_0^{-(\gamma+2)}dx\nonumber\\
&=\int\big(u\cdot\nabla b+b\divv u-b\cdot\nabla u\big)\cdot\Delta b\rho_0^{-(\gamma+2)}dx\nonumber\\
&\quad-\int(b\cdot\nabla u-u\cdot\nabla b-b\divv u+\nu\Delta b)\partial_ib\partial_i\rho_0^{-(\gamma+2)}dx\nonumber\\
&=\int u^i\partial_ib\partial_{jj}b\rho_0^{-(\gamma+2)}dx+C\int|b||\nabla u||\Delta b|\rho_0^{-(\gamma+2)}dx\nonumber\\
&\quad+C\int\big(|b||\nabla u|+|u||\nabla b|+|\Delta b|\big)|\nabla b|
\rho_0^{-(\gamma+2)}\sqrt{\rho_0}dx\nonumber\\
&\le \frac{\nu}{2}\int|\Delta b|^2\rho_0^{-(\gamma+2)}dx
+\int u^i\partial_ib\partial_{jj}b\rho_0^{-(\gamma+2)}dx
+C\|\nabla u\|_{L^\infty}^2\int|b|^2\rho_0^{-(\gamma+2)}dx\nonumber\\
&\quad+C\big(1+\|\sqrt{\rho_0}u\|_{L^\infty}\big)\int|\nabla b|^2\rho_0^{-(\gamma+2)}dx\nonumber\\
&\le \frac{\nu}{2}\int|\Delta b|^2\rho_0^{-(\gamma+2)}dx
+C\phi(t)\int|\nabla b|^2\rho_0^{-(\gamma+2)}dx+C\phi(t)\int|b|^2\rho_0^{-(\gamma+2)}dx,
\end{align}
where we have used the following fact
\begin{align}\label{z2.28}
\int u^i\partial_ib\partial_{jj}b\rho_0^{-(\gamma+2)}dx
&=-\int\partial_ju^i\partial_ib\partial_jb\rho_0^{-(\gamma+2)}dx
-\int u\partial_{ji}b\partial_jb\rho_0^{-(\gamma+2)}dx
-\int u^i\partial_ib\partial_jb\partial_j\rho_0^{-(\gamma+2)}dx\nonumber\\
&=-\int\partial_ju^i\partial_ib\partial_jb\rho_0^{-(\gamma+2)}dx
+\frac12\int \divv u|\nabla b|^2\rho_0^{-(\gamma+2)}dx\nonumber\\
&\quad+\frac12\int u|\nabla b|^2
\cdot\nabla\rho_0^{-(\gamma+2)}dx
-\int u^i\partial_ib\partial_jb\partial_j\rho_0^{-(\gamma+2)}dx\nonumber\\
&\le C\int|\nabla u||\nabla b|^2\rho_0^{-(\gamma+2)}dx
+C\int|u||\nabla b|^2\rho_0^{-(\gamma+2)}\sqrt{\rho_0}dx\nonumber\\
&\le  C\big(\|\nabla u\|_{L^\infty}+\|\sqrt{\rho_0}u\|_{L^\infty}\big)\int|\nabla b|^2\rho_0^{-(\gamma+2)}dx\nonumber\\
&\le C\phi(t)\int|\nabla b|^2\rho_0^{-(\gamma+2)}dx.
\end{align}
Integration by parts together with \eqref{h1} yields that
\begin{align*}
\int|\nabla^2b|^2\rho_0^{-(\gamma+2)}dx
&=\int\partial_{ij}b\cdot\partial_{ij}b\rho_0^{-(\gamma+2)}dx\nonumber\\
&=\int|\Delta b|^2\rho_0^{-(\gamma+2)}dx-\int\partial_{ij}b\cdot\partial_jb
\partial_i\rho_0^{-(\gamma+2)}dx
+\int\partial_{ii}b\cdot\partial_jb\partial_j\rho_0^{-(\gamma+2)}dx\nonumber\\
&\le \int|\Delta b|^2\rho_0^{-(\gamma+2)}dx
+C\int|\nabla^2b||\nabla b|\rho_0^{-(\gamma+2)}\sqrt{\rho_0}dx\nonumber\\
&\le \int|\Delta b|^2\rho_0^{-(\gamma+2)}dx+\frac12\int|\nabla^2b|^2\rho_0^{-(\gamma+2)}dx
+C\int|\nabla b|^2\rho_0^{-(\gamma+2)}dx,
\end{align*}
that is
\begin{align}\label{2.29}
\|\rho_0^{-\frac{\gamma+2}{2}}\nabla^2b\|_{L^2}^2\le 2\|\rho_0^{-\frac{\gamma+2}{2}}\Delta b\|_{L^2}^2
+C\|\rho_0^{-\frac{\gamma+2}{2}}\nabla b\|_{L^2}^2.
\end{align}
Combining \eqref{2.28} and \eqref{2.29}, we derive that
\begin{align*}
\frac{d}{dt}\|\rho_0^{-\frac{\gamma+2}{2}}\nabla b\|_{L^2}^2+\|\rho_0^{-\frac{\gamma+2}{2}}\nabla^2b\|_{L^2}^2
\le C\phi(t)\Big(\|\rho_0^{-\frac{\gamma+2}{2}}\nabla b\|_{L^2}^2+\|\rho_0^{-\frac{\gamma+2}{2}}b\|_{L^2}^2\Big).
\end{align*}
This together with \eqref{2.27} leads to
\begin{align}\label{2.31}
&\frac{d}{dt}\Big(\mu\|\rho_0^{-\frac{\gamma}{2}}\nabla u\|_{L^2}^2
+(\mu+\lambda)\|\rho_0^{-\frac{\gamma}{2}}\divv u\|_{L^2}^2+\|\rho_0^{-\frac{\gamma+2}{2}}\nabla b\|_{L^2}^2
+\frac{1}{2\mu+\lambda}\|\rho_0^{-\frac{\gamma}{2}}p\|_{L^2}^2\Big)\nonumber\\
&\quad-2\frac{d}{dt}\int\rho_0^{-\gamma}\divv updx+\frac32\|\sqrt{\rho}\rho_0^{-\frac{\gamma}{2}}u_t\|_{L^2}^2
+\|\rho_0^{-\frac{\gamma+2}{2}}\nabla^2b\|_{L^2}^2\nonumber\\
&\le 2\eta\|\rho_0^{-\frac{\gamma+1}{2}}\nabla F\|_{L^2}^2
+C\phi(t)\Big(\|\rho_0^{1-\frac{\gamma}{2}}\theta\|_{L^2}^2
+\|\rho_0^{-\frac{\gamma}{2}}\nabla u\|_{L^2}^2
+\|\rho_0^{-\frac{\gamma+1}{2}}\nabla b\|_{L^2}^2\Big)\nonumber\\
&\quad+C\phi(t)\|\rho_0^{-\frac{\gamma}{2}}b\|_{L^2}^2+C\|\rho_0^\frac{1-\gamma}{2}\nabla\theta\|_{L^2}^2
+C\|\rho^{-\frac{\gamma+1}{2}}|\nabla b||b|\|_{L^2}^2.
\end{align}

3. We need to estimate $\|\rho_0^{-\frac{\gamma+1}{2}}\nabla F\|_{L^2}^2$. It follows from $\eqref{a1}_2$ that $F$ satisfies
\begin{align}\label{2.32}
\Delta F=\divv(\rho\dot{u})+\divv(b\otimes b).
\end{align}
%where $\dot{u}\triangleq u_t+u\cdot\nabla u$.
Multiplying \eqref{2.32} by $\rho_0^{-(\gamma+1)}F$ and
integrating the resultant over $\mathbb{R}^3$, we then infer from \eqref{h1} and Cauchy-Schwarz inequality that
\begin{align}
\|\rho_0^{-\frac{\gamma+1}{2}}\nabla F\|_{L^2}^2&=-\int F\nabla\rho_0^{-(\gamma+1)}\cdot\nabla Fdx
+\int\divv(\rho\dot{u})\rho_0^{-(\gamma+1)}Fdx
+\int\divv(b\otimes b)\rho_0^{-(\gamma+1)}Fdx\nonumber\\
&=-\int F\rho_0^{-(\gamma+2)}\nabla\rho_0\cdot\nabla Fdx
-\int\rho\dot{u}\cdot\big(\rho_0^{-(\gamma+1)}\nabla F+\nabla\rho_0^{-(\gamma+1)}F\big)dx\nonumber\\
&\quad-\int b\otimes b:\big(\rho_0^{-(\gamma+1)}\nabla F+\nabla\rho_0^{-(\gamma+1)}F\big)dx\nonumber\\
&\le C\int\rho_0^{-\frac{2\gamma+1}{2}}|F||\nabla F|dx
+C\int\rho\rho_0^{-(\gamma+1)}|u_t||\nabla F|dx\nonumber\\
&\quad+C\int\rho\rho_0^{-(\gamma+1)}|u||\nabla u||\nabla F|dx
+C\int\rho\rho_0^{-\frac{2\gamma+1}{2}}|u_t||F|dx\nonumber\\
&\quad+C\int\rho|u||\nabla u|\rho_0^{-\frac{2\gamma+1}{2}}|F|dx
+C\int|b|^2\rho_0^{-(\gamma+1)}|\nabla F|dx
+C\int|b|^2\rho_0^{-\frac{2\gamma+1}{2}}|F|dx\nonumber\\
&\le \frac12\|\rho_0^{-\frac{\gamma+1}{2}}\nabla F\|_{L^2}^2+C\|\rho_0^{-\frac{\gamma}{2}}F\|_{L^2}^2
+C\|\sqrt{\rho}\rho_0^{-\frac{\gamma}{2}}u_t\|_{L^2}^2\nonumber\\
&\quad+C\|\sqrt{\rho_0}u\|_{L^\infty}^2\|\rho_0^{-\frac{\gamma}{2}}\nabla u\|_{L^2}^2
+C\|\rho_0^{-\frac{\gamma+1}{2}}|b|^2\|_{L^2}^2\nonumber\\
&\le \frac12\|\rho_0^{-\frac{\gamma+1}{2}}\nabla F\|_{L^2}^2
+C\phi(t)\Big(\|\rho_0^{-\frac{\gamma}{2}}\nabla u\|_{L^2}^2+\|\rho_0^{1-\frac{\gamma}{2}}\theta\|_{L^2}^2
+\|\rho_0^{-\frac{\gamma}{2}}b\|_{L^2}^2\Big)\nonumber\\
&\quad+C\|\sqrt{\rho}\rho_0^{-\frac{\gamma}{2}}u_t\|_{L^2}^2
+C\|\rho_0^{-\frac{\gamma+1}{2}}|b|^2\|_{L^2}^2,
\end{align}
due to
\begin{align*}
\|\rho_0^{-\frac{\gamma}{2}}F\|_{L^2}^2&\le C\|\rho_0^{-\frac{\gamma}{2}}\nabla u\|_{L^2}^2
+C\|\rho\rho_0^{-\frac{\gamma}{2}}\theta\|_{L^2}^2+C\|\rho_0^{-\frac{\gamma}{2}}|b|^2\|_{L^2}^2\nonumber\\
&\le C\|\rho_0^{-\frac{\gamma}{2}}\nabla u\|_{L^2}^2+C\|\rho_0^{1-\frac{\gamma}{2}}\theta\|_{L^2}^2
+C\|b\|_{L^\infty}^2\|\rho_0^{-\frac{\gamma}{2}}b\|_{L^2}^2.
\end{align*}
Thus, it is not hard to see that
\begin{align}\label{2.34}
\|\rho_0^{-\frac{\gamma+1}{2}}\nabla F\|_{L^2}^2
&\le C\phi(t)\big(\|\rho_0^{-\frac{\gamma}{2}}\nabla u\|_{L^2}^2+\|\rho_0^{1-\frac{\gamma}{2}}\theta\|_{L^2}^2
+\|\rho_0^{-\frac{\gamma}{2}}b\|_{L^2}^2\big) \nonumber\\
& \quad +C\|\sqrt{\rho}\rho_0^{-\frac{\gamma}{2}}u_t\|_{L^2}^2
+C\|\rho_0^{-\frac{\gamma+1}{2}}|b|^2\|_{L^2}^2.
\end{align}
Then, by \eqref{2.31} and \eqref{2.34} and choosing $\eta$ suitably small, we have
\begin{align}
&\frac{d}{dt}\Big(\mu\|\rho_0^{-\frac{\gamma}{2}}\nabla u\|_{L^2}^2
+(\mu+\lambda)\|\rho_0^{-\frac{\gamma}{2}}\divv u\|_{L^2}^2+\|\rho_0^{-\frac{\gamma+2}{2}}\nabla b\|_{L^2}^2
+\frac{1}{2\mu+\lambda}\|\rho_0^{-\frac{\gamma}{2}}p\|_{L^2}^2\Big)\nonumber\\
&\quad+\|\sqrt{\rho}\rho_0^{-\frac{\gamma}{2}}u_t\|_{L^2}^2
+\|\rho_0^{-\frac{\gamma+2}{2}}\nabla^2b\|_{L^2}^2\nonumber\\
&\le 2\frac{d}{dt}\int\rho_0^{-\gamma}\divv updx
+C\phi(t)\Big(\|\rho_0^{1-\frac{\gamma}{2}}\theta\|_{L^2}^2
+\|\rho_0^{-\frac{\gamma}{2}}\nabla u\|_{L^2}^2
+\|\rho_0^{-\frac{\gamma+2}{2}}\nabla b\|_{L^2}^2\Big)\nonumber\\
&\quad+C\phi(t)\|\rho_0^{-\frac{\gamma}{2}}b\|_{L^2}^2+C\|\rho_0^\frac{1-\gamma}{2}\nabla\theta\|_{L^2}^2
+C\|\rho^{-\frac{\gamma+2}{2}}|\nabla b||b|\|_{L^2}^2.
\end{align}
We infer from Lemma \ref{l22}, Lemma \ref{l21}, and  \eqref{h1} that
\begin{align}
\sup_{0\le t\le T}\|\rho_0^{-\frac{\gamma}{2}}p\|_{L^2}^2
\le C\sup_{0\le t\le T}\|\rho \rho_0^{-\frac{\gamma}{2}}\theta\|_{L^2}^2\le
C\Big\|\Big(\rho_0^\frac{1-\gamma}{2}u_0,
\rho_0^{1-\frac{\gamma}{2}}\theta_0,
\rho_0^{-\frac{\gamma}{2}}b_0\Big)\Big\|_{L^2}^2,
\end{align}
and
\begin{align}
\Big|\int\rho_0^{-\gamma}\divv updx\Big|\le \frac{\mu+\lambda}{2}\|\rho_0^{-\frac{\gamma}{2}}\divv u\|_{L^2}^2
+C\|\rho_0^{-\frac{\gamma}{2}}p\|_{L^2}^2.
\end{align}

4. Multiplying $\eqref{a1}_4$ by $4|b|^2b\rho_0^{-(\gamma+2)}$ and integration the resultant over $\mathbb{R}^3$, we get from \eqref{h1} and Sobolev's inequality that
\begin{align}\label{2.23}
&\frac{d}{dt}\int|b|^4\rho_0^{-(\gamma+2)}dx+4\int\big(|\nabla b|^2|b|^2+2|\nabla|b||^2|b|^2\big)\rho_0^{-(\gamma+2)}dx\nonumber\\
&=4\int\big(b\cdot\nabla u-u\cdot\nabla b-b\divv u\big)\cdot|b|^2b\rho_0^{-(\gamma+2)}dx\nonumber\\
&=4\int (b\cdot\nabla) u\cdot b|b|^2\rho_0^{-(\gamma+2)}dx-3\int|b|^4\divv u\rho_0^{-(\gamma+2)}dx\nonumber\\
&\quad-\int\partial_ib\cdot|b|^2b\partial_i\rho_0^{-(\gamma+2)}dx
+\int|b|^4 u\cdot\nabla\rho_0^{-(\gamma+2)}dx\nonumber\\
&\le C\|\nabla u\|_{L^\infty}\int|b|^4\rho_0^{-(\gamma+2)}dx
+C\int|u||b|^4\rho_0^{-(\gamma+2)}\sqrt{\rho_0}dx
+C\int|b|^3|\nabla b|\rho_0^{-(\gamma+2)}\sqrt{\rho_0}dx\nonumber\\
&\le C(1+\|\nabla u\|_{L^\infty}+\|\sqrt{\rho_0}u\|_{L^\infty}
)\int|b|^4\rho_0^{-(\gamma+2)}dx
+2\int|\nabla b|^2|b|^2\rho_0^{-(\gamma+2)}dx\nonumber\\
&\le 2\int|\nabla b|^2|b|^2\rho_0^{-(\gamma+2)}dx
+C\phi(t)\int|b|^4\rho_0^{-(\gamma+2)}dx,
\end{align}
due to
\begin{align*}
& \int\Delta b\cdot|b|^2b\rho_0^{-(\gamma+2)}dx \notag \\
&=\int\partial_{ii}b\cdot|b|^2b\rho_0^{-(\gamma+2)}dx\nonumber\\
&=-\int\partial_ib\cdot\partial_i|b|^2b\rho_0^{-(\gamma+2)}dx
-\int\partial_ib\cdot|b|^2\partial_ib\rho_0^{-(\gamma+2)}dx
-\int\partial_ib\cdot|b|^2b\partial_i\rho_0^{-(\gamma+2)}dx\nonumber\\
&=-2\int|\nabla|b||^2|b|^2\rho_0^{-(\gamma+2)}dx-\int|\nabla b|^2|b|^2\rho_0^{-(\gamma+2)}dx
-\int\partial_ib\cdot|b|^2b\partial_i\rho_0^{-(\gamma+2)}dx,
\end{align*}
and
\begin{align*}
-\int (u\cdot\nabla) b\cdot|b|^2b\rho_0^{-(\gamma+2)}dx
&=-\int u^i\partial_ib\cdot|b|^2b\rho_0^{-(\gamma+2)}dx\nonumber\\
&=\int\divv u|b|^4\rho_0^{-(\gamma+2)}dx
+\int (u\cdot\nabla)b\cdot|b|^2b\rho_0^{-(\gamma+2)}dx\nonumber\\
&\quad+2\int (u\cdot\nabla)b\cdot|b|^2b\rho_0^{-(\gamma+2)}dx
+\int|b|^4 u\cdot\nabla\rho_0^{-(\gamma+2)}dx.
\end{align*}
Thus, we infer from \eqref{2.23} and Gronwall's inequality that
\begin{align}\label{2.33}
\sup_{0\le t\le T}\Big\|\rho_0^{-\frac{\gamma+2}{2}}|b|^2\Big\|_{L^2}^2
+\int_0^T\Big\|\Big(\rho_0^{-\frac{\gamma+2}{2}}|\nabla b||b|,
\rho_0^{-\frac{\gamma+2}{2}}|\nabla|b|||b|\Big)\Big\|_{L^2}^2dt
\le C\Big\|\rho_0^{-\frac{\gamma+2}{2}}|b_0|^2\Big\|_{L^2}^2.
\end{align}
Thanks to the above inequalities, and applying Gronwall's inequality, the conclusion follows from Lemmas \ref{l21}--\ref{l22}.
\end{proof}

\begin{lemma}\label{l24}
There exists a positive constant $C$ depending only on $\mu$, $\lambda$, $\widetilde{R}$, $c_v$, $\kappa$, $\nu$, $K_1$, and $\Phi_T$ such that
\begin{align}\label{z3.24}
&\sup_{0\le t\le T}\|\rho_0^{1-\frac{\gamma}{2}}\nabla\theta\|_{L^2}^2
+\int_0^T\|\rho_0^\frac{3-\gamma}{2}\theta_t\|_{L^2}^2dt\nonumber\\
&\le C\Big\|\Big(\rho_0^\frac{1-\gamma}{2}u_0,
\rho_0^{1-\frac{\gamma}{2}}\theta_0,
\rho_0^{-\frac{\gamma}{2}}b_0, \rho_0^{-\frac{\gamma+2}{2}}|b_0|^2, \rho_0^{-\frac{\gamma}{2}}\nabla u_0,
\rho_0^{-\frac{\gamma+2}{2}}\nabla b_0, \rho_0^{-\frac{\gamma}{2}}\nabla\theta_0\Big)\Big\|_{L^2}^2.
\end{align}
\end{lemma}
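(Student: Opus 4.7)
The plan is to multiply the temperature equation $\eqref{a1}_3$ by $\rho_0^{2-\gamma}\theta_t$ and integrate over $\mathbb{R}^3$. With this test function, the principal terms are
$c_v\int\rho\rho_0^{2-\gamma}|\theta_t|^2\,dx$, which is equivalent to $\|\rho_0^{(3-\gamma)/2}\theta_t\|_{L^2}^2$ via Lemma \ref{l21}, and $\frac{\kappa}{2}\frac{d}{dt}\int\rho_0^{2-\gamma}|\nabla\theta|^2\,dx=\frac{\kappa}{2}\frac{d}{dt}\|\rho_0^{1-\gamma/2}\nabla\theta\|_{L^2}^2$, obtained after integration by parts on the Laplacian. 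The correction term $\kappa\int\theta_t\nabla\theta\cdot\nabla\rho_0^{2-\gamma}\,dx$ generated by the integration by parts is absorbed using (H1), which gives $|\nabla\rho_0^{2-\gamma}|\le C\rho_0^{5/2-\gamma}$, followed by Cauchy--Schwarz.

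I then bound the four error terms on the RHS: (i) the convection $c_v\int\rho u\cdot\nabla\theta\,\rho_0^{2-\gamma}\theta_t\,dx$, using $\rho\le C\rho_0$ (Lemma \ref{l21}) together with $\|\sqrt{\rho_0}u\|_{L^\infty}^2\le\phi(t)$; (ii) the pressure work $-\int p\divv u\,\rho_0^{2-\gamma}\theta_t\,dx$ with $p=\widetilde R\rho\theta$, controlled via $\|\nabla u\|_{L^\infty}^2\le\phi(t)$ and the bound on $\|\rho_0^{1-\gamma/2}\theta\|_{L^2}$ supplied by Lemma \ref{l22} (with $\alpha=\gamma-1$); (iii) the viscous heating $\int\mathcal Q(\nabla u)\rho_0^{2-\gamma}\theta_t\,dx$, for which I extract one factor of $\|\nabla u\|_{L^\infty}$ and pair $\rho_0^{-\gamma/2}|\nabla u|$ (bounded in $L^\infty(0,T;L^2)$ by Lemma \ref{l23}) against $\rho_0^{(3-\gamma)/2}|\theta_t|$ via Cauchy--Schwarz; (iv) the Ohmic heating $\nu\int|\curl b|^2\rho_0^{2-\gamma}\theta_t\,dx$, handled in the same spirit using $\|\nabla b\|_{L^\infty}^2\le\phi(t)$ and the weighted bound $\|\rho_0^{-(\gamma+2)/2}\nabla b\|_{L^2}^2\le C$ from Lemma \ref{l23} (noting $\rho_0$ is bounded, so $\rho_0^{-\gamma/2}\le C\rho_0^{-(\gamma+2)/2}$ up to a constant). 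In every case the $\theta_t$ factor is absorbed into $\varepsilon\|\rho_0^{(3-\gamma)/2}\theta_t\|_{L^2}^2$.

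Once these estimates are collected, choosing $\varepsilon$ suitably small and grouping the remaining terms of the form $\phi(t)\cdot(\text{already-controlled quantities})$, I obtain a differential inequality of the shape
\begin{equation*}
\frac{d}{dt}\|\rho_0^{1-\gamma/2}\nabla\theta\|_{L^2}^2+\|\rho_0^{(3-\gamma)/2}\theta_t\|_{L^2}^2\le C\phi(t)\|\rho_0^{1-\gamma/2}\nabla\theta\|_{L^2}^2+C\mathcal R(t),
\end{equation*}
where $\mathcal R(t)\in L^1(0,T)$ by virtue of Lemmas \ref{l22}--\ref{l23} and the definition of $\Phi_T$. Gronwall's lemma then yields \eqref{z3.24}, with the initial datum $\|\rho_0^{1-\gamma/2}\nabla\theta_0\|_{L^2}^2\le C\|\rho_0^{-\gamma/2}\nabla\theta_0\|_{L^2}^2$ absorbed into the stated RHS (using $\rho_0\in L^\infty$).

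The main obstacle I anticipate is the coupling through the Ohmic heating term $\nu|\curl b|^2\rho_0^{2-\gamma}\theta_t$: unlike in the Navier--Stokes analysis of \cite{LX23}, the cubic-in-derivative product $|\nabla b|^2\theta_t$ has to be decomposed so that \emph{exactly} the available weighted $L^\infty_tL^2_x$ estimate on $\rho_0^{-(\gamma+2)/2}\nabla b$ is invoked, while keeping one $\|\nabla b\|_{L^\infty}$ factor free to feed into $\phi(t)$. A parallel concern applies to $\mathcal Q(\nabla u)\theta_t$. Once the weight bookkeeping is done correctly, the analysis closes; otherwise one undershoots and cannot absorb everything into Lemma \ref{l23}.
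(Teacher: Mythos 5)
Your proposal is correct and follows essentially the same route as the paper: multiply $\eqref{a1}_3$ by $\rho_0^{2-\gamma}\theta_t$, integrate by parts the Laplacian, control the correction and error terms with \eqref{h1}, Lemma~\ref{l21}, and the weighted bounds from Lemmas~\ref{l22}--\ref{l23}, and close with Gronwall. The minor internal weight discrepancies in your splittings (e.g., pairing $\rho_0^{-\gamma/2}|\nabla u|$ rather than $\rho_0^{(1-\gamma)/2}|\nabla u|$ against $\rho_0^{(3-\gamma)/2}|\theta_t|$) only shed surplus powers of $\rho_0\in L^\infty$ and are harmless.
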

\begin{proof}[Proof]
Multiplying $\eqref{a1}_3$ by $\rho_0^{2-\gamma}\theta_t$ and integrating the resultant over $\mathbb{R}^3$, we have
\begin{align*}
&\frac{\kappa}{2}\frac{d}{dt}\|\rho_0^{1-\frac{\gamma}{2}}\nabla\theta\|_{L^2}^2
+c_v\|\sqrt{\rho}\rho_0^{1-\frac{\gamma}{2}}\theta_t\|_{L^2}^2\nonumber\\
&=\kappa\int\nabla\theta\cdot\nabla\rho_0^{2-\gamma}\theta_tdx
+\int(\mathcal Q(\nabla u)+|\curl b|^2-p\divv u)\rho_0^{2-\gamma}\theta_tdx
-c_v\int\rho u\cdot\nabla\theta\rho_0^{2-\gamma}\theta_tdx\nonumber\\
&\le \frac{c_v}{2}\|\sqrt{\rho}\rho_0^{1-\frac{\gamma}{2}}\theta_t\|_{L^2}^2
+C\|\rho_0^{1-\frac{\gamma}{2}}\nabla\theta\|_{L^2}^2+C\|\nabla u\|_{L^\infty}^2\|\rho_0^\frac{1-\gamma}{2}\nabla u\|_{L^2}^2
+C\|\nabla b\|_{L^\infty}^2\|\rho_0^\frac{1-\gamma}{2}\nabla b\|_{L^2}^2\nonumber\\
&\quad +C\|\nabla u\|_{L^\infty}^2\|\rho_0^\frac{3-\gamma}{2}\theta\|_{L^2}^2
+C\|\sqrt{\rho_0}u\|_{L^\infty}^2\|\rho_0^{1-\frac{\gamma}{2}}\nabla\theta\|_{L^2}^2\nonumber\\
&\le \frac{c_v}{2}\|\sqrt{\rho}\rho_0^{1-\frac{\gamma}{2}}\theta_t\|_{L^2}^2
+C\phi(t)\Big\|\Big(\rho_0^\frac{1-\gamma}{2}\nabla u, \rho_0^{-\frac{\gamma+2}{2}}\nabla b,
\rho_0^\frac{3-\gamma}{2}\theta, \rho_0^{1-\frac{\gamma}{2}}\nabla\theta\Big)\Big\|_{L^2}^2.
\end{align*}
Thus, we get that
\begin{align*}
\kappa\frac{d}{dt}\|\rho_0^{1-\frac{\gamma}{2}}\nabla\theta\|_{L^2}^2
+c_v\|\sqrt{\rho}\rho_0^{1-\frac{\gamma}{2}}\theta_t\|_{L^2}^2
\le C\phi(t)\Big\|\Big(\rho_0^\frac{1-\gamma}{2}\nabla u, \rho_0^{-\frac{\gamma+2}{2}}\nabla b,
\rho_0^\frac{3-\gamma}{2}\theta, \rho_0^{1-\frac{\gamma}{2}}\nabla\theta\Big)\Big\|_{L^2}^2,
\end{align*}
which combined with Gronwall's inequality and Lemmas \ref{l21}--\ref{l23} yields \eqref{z3.24}.
\end{proof}

\begin{lemma}\label{l25}
There exists a positive constant $C$ depending only on $\mu$, $\lambda$, $\widetilde{R}$, $c_v$, $\kappa$, $\nu$, $K_1$, and $\Phi_T$ such that
\begin{align}\label{2.40}
&\sup_{0\le t\le T}\Big(\|\rho_0^{1-\frac{\gamma}{2}}\dot{u}\|_{L^2}^2+\|\rho_0^{-\frac{\gamma+2}{2}}|\nabla b||b|\|_{L^2}^2\Big)
+\int_0^T\Big(\|\rho_0^\frac{1-\gamma}{2}\nabla\dot{u}\|_{L^2}^2+\|\rho_0^{-\frac{\gamma+2}{2}}|\Delta b||b|\|_{L^2}^2\Big)dt\nonumber\\
&\le C\Big\|\Big(\rho_0^\frac{1-\gamma}{2}u_0,
\rho_0^{1-\frac{\gamma}{2}}\theta_0,
\rho_0^{-\frac{\gamma}{2}}b_0, \rho_0^{-\frac{\gamma+2}{2}}|b_0|^2, \rho_0^{-\frac{\gamma}{2}}\nabla u_0,
\rho_0^{-\frac{\gamma+2}{2}}\nabla b_0, \rho_0^{-\frac{\gamma}{2}}\nabla\theta_0, \rho_0^{-\frac{\gamma}{2}}\wp_0\Big)\Big\|_{L^2}^2,
\end{align}
where
\begin{align*}
\wp_0\triangleq\mu\Delta u_0+(\mu+\lambda)\nabla \divv u_0-\widetilde{R}\nabla(\rho_0\theta_0)+b_0\cdot\nabla b_0-\frac12\nabla|b_0|^2.
\end{align*}
\end{lemma}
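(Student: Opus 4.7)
My plan is to derive two coupled weighted estimates simultaneously: a Hoff-type bound on the material derivative $\dot u$ and a companion estimate controlling $|b|$-weighted second derivatives of $b$. The two cannot be decoupled, because differentiating $\eqref{a1}_2$ in time produces magnetic forcing terms which, after substituting via $\eqref{a1}_4$, generate exactly $\|\rho_0^{-(\gamma+2)/2}|b||\Delta b|\|_{L^2}$-type integrals --- precisely the dissipation produced by the companion estimate. Throughout I shall invoke Lemmas \ref{l21}--\ref{l24} and the pointwise bound $|\nabla\rho_0|\le K_1\rho_0^{3/2}$ from \eqref{h1} without further comment.

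\textbf{Estimate on $\dot u$.} Apply the material derivative operator $\partial_t+u\cdot\nabla$ to $\eqref{a1}_2$; using $\eqref{a1}_1$ in the form $\rho\dot g=(\rho g)_t+\divv(\rho u g)$, the result is an evolution equation for $\rho\dot u$ whose right-hand side contains $\mu\Delta\dot u+(\mu+\lambda)\nabla\divv\dot u$ together with commutator, pressure, and magnetic contributions. Multiplying by $\rho_0^{1-\gamma}\dot u$ and integrating over $\mathbb R^3$, the left-hand side produces
\[
\tfrac12\tfrac{d}{dt}\|\sqrt{\rho}\rho_0^{(1-\gamma)/2}\dot u\|_{L^2}^2
+\mu\|\rho_0^{(1-\gamma)/2}\nabla\dot u\|_{L^2}^2
+(\mu+\lambda)\|\rho_0^{(1-\gamma)/2}\divv\dot u\|_{L^2}^2,
\]
modulo small weight errors absorbed by \eqref{h1} and the dissipation. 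The commutator and pressure terms are bounded by $C\phi(t)$ times quantities already controlled in Lemmas \ref{l22}--\ref{l24} (the pressure piece is treated by rewriting $p_t$ via $\eqref{a1}_3$, as in the derivation of \eqref{2.27}). For the magnetic contributions arising from $(b\cdot\nabla b)_t+u\cdot\nabla(b\cdot\nabla b)$ and the analogous pieces containing $\nabla|b|^2$, I would substitute $b_t=\nu\Delta b+b\cdot\nabla u-u\cdot\nabla b-b\divv u$ from $\eqref{a1}_4$; after Cauchy--Schwarz, absorbing into the dissipation for $\dot u$, these leave only terms of the form $C\phi(t)\|\rho_0^{-(\gamma+2)/2}|b||\nabla b|\|_{L^2}^2+C\|\rho_0^{-(\gamma+2)/2}|b||\Delta b|\|_{L^2}^2$ on the right-hand side.

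\textbf{Companion magnetic estimate.} To absorb the $|b||\Delta b|$-type integrals generated above, I would multiply $\eqref{a1}_4$ by $-\rho_0^{-(\gamma+2)}|b|^2\Delta b$ and integrate. One integration by parts in the time-derivative piece produces $\tfrac12\tfrac{d}{dt}\int\rho_0^{-(\gamma+2)}|b|^2|\nabla b|^2\,dx$ up to weight errors, while the viscous term yields the coercive quantity $\nu\int\rho_0^{-(\gamma+2)}|b|^2|\Delta b|^2\,dx$. The dangerous convective integral $\int\rho_0^{-(\gamma+2)}|b|^2(u\cdot\nabla b)\cdot\Delta b\,dx$ is treated exactly as in the calculation leading to \eqref{z2.28}: integrating by parts transfers the derivative onto $u$ and the weight, giving a bound $C\phi(t)\int\rho_0^{-(\gamma+2)}|b|^2|\nabla b|^2\,dx$. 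The remaining Lorentz-type pieces $\int\rho_0^{-(\gamma+2)}|b|^2(b\cdot\nabla u-b\divv u)\cdot\Delta b\,dx$ are dispatched by Cauchy--Schwarz, absorbing a fraction of the dissipation and leaving $C\phi(t)\|\rho_0^{-(\gamma+2)/2}|b|^2\|_{L^2}^2$, which is already controlled by Lemma \ref{l23}.

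\textbf{Closure and main obstacle.} Summing the two differential inequalities with appropriate small coefficients yields $\tfrac{d}{dt}\mathcal E(t)+\mathcal D(t)\le C\phi(t)\mathcal E(t)+C\phi(t)\mathcal R(t)$, with $\mathcal E=\|\rho_0^{1-\gamma/2}\dot u\|_{L^2}^2+\|\rho_0^{-(\gamma+2)/2}|b||\nabla b|\|_{L^2}^2$, $\mathcal D=\|\rho_0^{(1-\gamma)/2}\nabla\dot u\|_{L^2}^2+\|\rho_0^{-(\gamma+2)/2}|b||\Delta b|\|_{L^2}^2$, and $\mathcal R$ bounded via Lemmas \ref{l21}--\ref{l24}. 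Since $\phi\in L^1(0,T)$ by the definition of $\Phi_T$ in \eqref{2.1}, Gronwall's inequality together with the pointwise identity $\rho_0\dot u|_{t=0}=\wp_0$ (which gives $\mathcal E(0)$ in terms of the data on the right-hand side of \eqref{2.40}) yields the desired bound. The principal obstacle is that neither half of the argument closes on its own: controlling $\dot u$ alone leaves uncontrolled integrals of the $|b||\nabla b_t|$-type, while controlling $|b||\nabla b|$ alone leaves $|b|^2|\nabla u|^2$-type integrals outside the reach of the earlier lemmas. The resolution, adapted from the 2D argument of \cite{LSX2016}, is to use $\eqref{a1}_4$ to trade $b_t$ for $\Delta b$ and to choose the test function $-\rho_0^{-(\gamma+2)}|b|^2\Delta b$ in the companion estimate so that after integration by parts the convective coupling $u\cdot\nabla b$ is absorbed into $\phi(t)$ times quantities already on the left-hand side.
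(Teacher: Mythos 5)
The overall architecture of your plan --- couple a Hoff-type estimate on $\dot u$ (applying $\partial_t+\divv(u\,\cdot)$ to the momentum equation and testing with $\rho_0^{1-\gamma}\dot u$) to a companion magnetic estimate that produces $\int_0^T\|\rho_0^{-(\gamma+2)/2}|b||\Delta b|\|_{L^2}^2\,dt$, then close with Gronwall --- is the same as the paper's, and your $\dot u$ half is essentially what the paper does in its step 2 (equation \eqref{2.48} onwards). The gap is in your companion magnetic estimate.

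\textbf{The problem with the test function $-\rho_0^{-(\gamma+2)}|b|^2\Delta b$.} Your claim that the convective integral $\int\rho_0^{-(\gamma+2)}|b|^2(u\cdot\nabla b)\cdot\Delta b\,dx$ can be handled ``exactly as in the calculation leading to \eqref{z2.28}'' is not correct. In \eqref{z2.28} there is no $|b|^2$ prefactor; integrating by parts yields only terms $|\nabla u|\,|\nabla b|^2\rho_0^{-(\gamma+2)}$ and $\sqrt{\rho_0}\,|u|\,|\nabla b|^2\rho_0^{-(\gamma+2)}$, all controlled. With the $|b|^2$ prefactor the situation is different. Integrating by parts in $\Delta b$ produces, among others, the piece
\begin{align*}
-\int\rho_0^{-(\gamma+2)}\partial_k(|b|^2)\,u^i\partial_i b^j\partial_k b^j\,dx
= -2\int\rho_0^{-(\gamma+2)}\big(b^l\partial_k b^l\big)\,u^i\partial_i b^j\partial_k b^j\,dx,
\end{align*}
together with an analogous contribution arising from the second integration by parts of $-\int\rho_0^{-(\gamma+2)}|b|^2 u^i\partial_{ik}b^j\partial_k b^j\,dx$. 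Both are pointwise of size $\rho_0^{-(\gamma+2)}|b|\,|u|\,|\nabla b|^3$. Since $|u|$ can only be paired with the weight via $|u|\le \rho_0^{-1/2}\|\sqrt{\rho_0}u\|_{L^\infty}$ (the unweighted $\|u\|_{L^\infty}$ is not in $\phi$), after Cauchy--Schwarz against the controlled quantity $\rho_0^{-(\gamma+2)}|b|^2|\nabla b|^2$ one is left with $\rho_0^{-(\gamma+3)}|\nabla b|^4\le \|\nabla b\|_{L^\infty}^2\rho_0^{-(\gamma+3)}|\nabla b|^2$, and $\|\rho_0^{-(\gamma+3)/2}\nabla b\|_{L^2}^2$ is strictly more singular than the $\|\rho_0^{-(\gamma+2)/2}\nabla b\|_{L^2}^2$ that Lemma~\ref{l23} provides. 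No recombination of $\|b\|_{L^\infty}$, $\|\nabla b\|_{L^\infty}$, $\|\sqrt{\rho_0}u\|_{L^\infty}$ fixes this, because the derivative that landed on $|b|^2$ has changed the structure from $(|b|\,|\nabla b|)^2$ to $|b|\,|\nabla b|^3$ with one factor of $\rho_0^{-1/2}$ too many. The same defect appears in your time-derivative integral: $-\int\rho_0^{-(\gamma+2)}|b|^2 b_t\cdot\Delta b\,dx$ is \emph{not} $\frac12\frac{d}{dt}\int\rho_0^{-(\gamma+2)}|b|^2|\nabla b|^2\,dx$ up to weight errors --- there is the additional piece $-\int\rho_0^{-(\gamma+2)}(b\cdot b_t)|\nabla b|^2\,dx$, whose $u\cdot\nabla b$ contribution (after substituting \eqref{a1}$_4$) has the same unabsorbable $|b|\,|u|\,|\nabla b|^3\rho_0^{-(\gamma+2)}$ form.

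\textbf{How the paper avoids it.} The paper's test function is $\rho_0^{-(\gamma+2)}\tilde b\,\Delta|\tilde b|^2$, where $\tilde b(a_1,a_2,a_3)=a_1b^1+a_2b^2+a_3b^3$ is \emph{scalar}. Because $\tilde b$ is scalar, $\tilde b_t\,\tilde b=\frac12\partial_t|\tilde b|^2$ and $\tilde b\,(u\cdot\nabla\tilde b)=\frac12\,u\cdot\nabla|\tilde b|^2$, so the time and convective pieces become, respectively, $-\frac12\int\partial_t|\tilde b|^2\,\Delta|\tilde b|^2\rho_0^{-(\gamma+2)}dx$ and $\frac12\int(u\cdot\nabla|\tilde b|^2)\,\Delta|\tilde b|^2\rho_0^{-(\gamma+2)}dx$; the latter has precisely the structure of \eqref{z2.28} with $|\tilde b|^2$ in place of $b$, and \emph{all} terms after integration by parts involve only $|\nabla|\tilde b|^2|^2 = 4\tilde b^2|\nabla\tilde b|^2\sim(|b|\,|\nabla b|)^2$, never an unpaired extra $|\nabla b|$. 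Moreover, $\tilde b\,\Delta\tilde b=\frac12\Delta|\tilde b|^2-|\nabla\tilde b|^2$ converts the viscous term into $\frac\nu2\|\rho_0^{-(\gamma+2)/2}\Delta|\tilde b|^2\|_{L^2}^2$ plus lower order, and summing over the sign choices of $(a_1,a_2,a_3)$ reconstructs $|b|\,|\Delta b|$ from $|\Delta|\tilde b|^2|$. This scalarization trick is the missing ingredient; without it the $|b|^2\Delta b$ test function produces terms outside the reach of Lemmas~\ref{l21}--\ref{l24}.
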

\begin{proof}[Proof]
1. Motivated by \cite{LSX2016}, for $a_1$, $a_2$, $a_3$ $\in\{-1, 0, 1\}$, we denote by
\begin{align*}
\tilde{b}(a_1, a_2, a_3)=a_1b^1+a_2b^2+a_3b^3, \quad \tilde{u}(a_1, a_2, a_3)=a_1u^1+a_2u^2+a_3u^3.
\end{align*}
Then it follows from $\eqref{a1}_4$ that
\begin{align}\label{2.41}
\tilde{b}_t-\nu\Delta\tilde{b}= b\cdot\nabla\tilde{u}-u\cdot\nabla\tilde{b}-\tilde{b}\divv u.
\end{align}
Multiplying \eqref{2.41} by $\rho_0^{-(\gamma+2)}\tilde{b}\Delta|\tilde{b}|^2$ and integrating the resulting equation over $\mathbb{R}^3$, we derive that
\begin{align}\label{z2.42}
& -\int\tilde{b}_t\cdot\rho_0^{-(\gamma+2)}\tilde{b}\Delta|\tilde{b}|^2dx
+\nu\int\Delta\tilde{b}\cdot\rho_0^{-(\gamma+2)}\tilde{b}\Delta|\tilde{b}|^2dx \notag \\
& =-\int(b\cdot\nabla\tilde{u}-\tilde{b}\divv u)\cdot\rho_0^{-(\gamma+2)}\tilde{b}\Delta|\tilde{b}|^2dx
+\int u\cdot\nabla \tilde{b}\cdot\rho^{-(\gamma+2)}\tilde{b}\Delta|\tilde{b}|^2dx.
\end{align}
Integration by parts gives that
\begin{align}\label{2.42}
& -\int\tilde{b}_t\cdot\rho_0^{-(\gamma+2)}\tilde{b}\Delta|\tilde{b}|^2dx \notag \\
&=-\frac12\int\partial_t|\tilde{b}|^2\partial_{ii}|\tilde{b}|^2\rho_0^{-(\gamma+2)}dx\nonumber\\
&=\frac12\int\partial_t(\partial_i|\tilde{b}|^2)\partial_i|\tilde{b}|^2\rho_0^{-(\gamma+2)}dx
+\int\tilde{b}_t\cdot\tilde{b}\partial_i|\tilde{b}|^2\partial_i\rho_0^{-(\gamma+2)}dx\nonumber\\
&=\frac14\frac{d}{dt}\int|\nabla|\tilde{b}|^2|^2\rho_0^{-(\gamma+2)}dx
+\int(\nu\Delta\tilde{b}+b\cdot\nabla\tilde{u}-u\cdot\nabla\tilde{b}-\tilde{b}
\divv u)\cdot\tilde{b}\partial_i|\tilde{b}|^2\partial_i\rho_0^{-(\gamma+2)}dx.
\end{align}
Owing to $\Delta\tilde{b}\cdot\tilde{b}=\frac12(\Delta|\tilde{b}|^2-2|\nabla\tilde{b}|^2)$, we have
\begin{align}\label{2.43}
\nu\int\Delta\tilde{b}\cdot\rho_0^{-(\gamma+2)}\tilde{b}\Delta|\tilde{b}|^2dx
=\frac{\nu}{2}\int|\Delta|\tilde{b}|^2|^2\rho_0^{-(\gamma+2)}dx
-\nu\int|\nabla\tilde{b}|^2\Delta|\tilde{b}|^2\rho_0^{-(\gamma+2)}dx.
\end{align}
Inserting \eqref{2.42} and \eqref{2.43} into \eqref{z2.42} implies that
\begin{align}\label{z2.43}
&\frac14\frac{d}{dt}\|\rho_0^{-\frac{\gamma+2}{2}}\nabla|\tilde{b}|^2\|_{L^2}^2
+\frac{\nu}{2}\|\rho_0^{-\frac{\gamma+2}{2}}\Delta|\tilde{b}|^2\|_{L^2}^2\nonumber\\
& = -\int(\nu\Delta\tilde{b}+b\cdot\nabla\tilde{u}-u\cdot\nabla\tilde{b}-\tilde{b}
\divv u)\cdot\tilde{b}\partial_i|\tilde{b}|^2\partial_i\rho_0^{-(\gamma+2)}dx
+\nu\int|\nabla\tilde{b}|^2\Delta|\tilde{b}|^2\rho_0^{-(\gamma+2)}dx \notag \\
& \quad -\int(b\cdot\nabla\tilde{u}-\tilde{b}\divv u)\cdot\rho_0^{-(\gamma+2)}\tilde{b}\Delta|\tilde{b}|^2dx
+\int u\cdot\nabla \tilde{b}\cdot\rho^{-(\gamma+2)}\tilde{b}\Delta|\tilde{b}|^2dx
\triangleq \bar{J}_1+\bar{J}_2+\bar{J}_3+\bar{J}_4.
\end{align}
It follows from integration by parts and Cauchy-Schwarz inequality that
\begin{align*}
\bar{J}_1&=-\frac{\nu}{2}\int\partial_i^2|\tilde{b}|^2\partial_i|\tilde{b}|^2
\partial_i\rho_0^{-(\gamma+2)}dx
+\nu\int|\partial_i|\tilde{b}||^2\partial_i|\tilde{b}|^2\partial_i\rho_0^{-(\gamma+2)}dx
\nonumber\\
&\quad+\int b_i\partial_i\tilde{u}\tilde{b}\partial_i|\tilde{b}|^2
\partial_i\rho_0^{-(\gamma+2)}dx
-\int|\tilde{b}|^2\partial_iu^i\partial_i|\tilde{b}|^2\partial_i\rho_0^{-(\gamma+2)}dx-\frac12\int u^i\partial_i|\tilde{b}|^2\partial_i|\tilde{b}|^2\partial_i\rho_0^{-(\gamma+2)}dx
\nonumber\\
&\le \frac{\nu}{16}\|\rho_0^{-\frac{\gamma+2}{2}}\Delta|\tilde{b}|^2\|_{L^2}^2
+C\|\rho_0^{-\frac{\gamma+2}{2}}\nabla|\tilde{b}|^2\|_{L^2}^2\nonumber\\
&\quad+C\|\nabla\tilde{b}\|_{L^\infty}\|\rho_0^{-\frac{\gamma+2}{2}}\nabla\tilde{b}\|_{L^2}
\|\rho_0^{-\frac{\gamma+2}{2}}\nabla|\tilde{b}|^2\|_{L^2}
+C\|\sqrt{\rho_0}u\|_{L^\infty}\|\rho_0^{-\frac{\gamma+2}{2}}\nabla|\tilde{b}|^2\|_{L^2}^2\nonumber\\
&\quad+C\|\nabla\tilde{u}\|_{L^\infty}\|\rho_0^{-\frac{\gamma+2}{2}}|\tilde{b}|^2\|_{L^2}
\|\rho_0^{-\frac{\gamma+2}{2}}\nabla\tilde{b}\|_{L^2},\\
\bar{J}_2&\le C\|\nabla \tilde{b}\|_{L^\infty}\|\rho_0^{-\frac{\gamma+2}{2}}\nabla\tilde{b}\|_{L^2}
\|\rho_0^{-\frac{\gamma+2}{2}}\Delta|\tilde{b}|^2\|_{L^2} \notag \\
&\le \frac{\nu}{12}\|\rho_0^{-\frac{\gamma+2}{2}}\Delta|\tilde{b}|^2\|_{L^2}^2
+C\|\nabla\tilde{b}\|_{L^\infty}^2\|\rho_0^{-\frac{\gamma+2}{2}}\nabla\tilde{b}\|_{L^2}^2,\\
\bar{J}_3&\le C\|\nabla u\|_{L^\infty}\int|b|^2\rho_0^{-\frac{\gamma+2}{2}}
|\Delta|\tilde{b}|^2|\rho_0^{-\frac{\gamma+2}{2}}dx\nonumber\\
&\le \frac{\nu}{12}\|\Delta|\tilde{b}|^2\rho_0^{-\frac{\gamma+2}{2}}\|_{L^2}^2
+C\|\nabla u\|_{L^\infty}^2\||b|^2\rho_0^{-\frac{\gamma+2}{2}}\|_{L^2}^2,\\
\bar{J}_4&=\frac12\int u^i\partial_i|\tilde{b}|^2\partial_{jj}|\tilde{b}|^2\rho_0^{-(\gamma+2)}dx\nonumber\\
&=-\frac12\int\partial_ju^i\partial_i|\tilde{b}|^2\partial_j|\tilde{b}|^2\rho_0^{-(\gamma+2)}dx
-\frac12\int u^i\partial_j\partial_i|\tilde{b}|^2\partial_j|\tilde{b}|^2\rho_0^{-(\gamma+2)}dx\nonumber\\
&\quad-\frac12\int u^i\partial_i|\tilde{b}|^2\partial_j|\tilde{b}|^2\partial_j\rho_0^{-(\gamma+2)}dx\nonumber\\
&=-\frac12\int\partial_ju^i\partial_i|\tilde{b}|^2\partial_j|\tilde{b}|^2\rho_0^{-(\gamma+2)}dx
-\frac12\int u^i\partial_i|\tilde{b}|^2\partial_j|\tilde{b}|^2\partial_j\rho_0^{-(\gamma+2)}dx\nonumber\\
&\quad+\frac14\int\partial_iu^i|\nabla|\tilde{b}|^2|^2\rho_0^{-(\gamma+2)}dx
+\frac14\int u^i|\nabla|\tilde{b}|^2|^2\partial_i\rho_0^{-(\gamma+2)}dx\nonumber\\
&\le C(\|\nabla u\|_{L^\infty}+\|\sqrt{\rho_0}u\|_{L^\infty})\|\rho_0^{-\frac{\gamma+2}{2}}\nabla|\tilde{b}|^2\|_{L^2}^2.
\end{align*}
Substituting the above estimates into \eqref{z2.43}, one gets from \eqref{h1} and Lemma \ref{l21} that
\begin{align}\label{2.46}
&\frac{d}{dt}\|\rho_0^{-\frac{\gamma+2}{2}}\nabla|\tilde{b}|^2\|_{L^2}^2
+\nu\|\rho_0^{-\frac{\gamma+2}{2}}\Delta|\tilde{b}|^2\|_{L^2}^2\nonumber\\
&\le C\|\rho_0^{-\frac{\gamma+2}{2}}\nabla|\tilde{b}|^2\|_{L^2}^2
+C\|\nabla\tilde{b}\|_{L^\infty}\|\rho_0^{-\frac{\gamma+2}{2}}\nabla\tilde{b}\|_{L^2}
\|\rho_0^{-\frac{\gamma+2}{2}}\nabla|\tilde{b}|^2\|_{L^2}\nonumber\\
&\quad+\|\rho_0^{-\frac{\gamma+2}{2}}\nabla\tilde{b}\|_{L^2}^2
+C\|\nabla\tilde{b}\|_{L^\infty}^2\|\rho_0^{-\frac{\gamma+2}{2}}\nabla\tilde{b}\|_{L^2}^2
+C\|\nabla u\|_{L^\infty}^2\||b|^2\rho_0^{-\frac{\gamma+2}{2}}\|_{L^2}^2\nonumber\\
&\quad+C(\|\nabla u\|_{L^\infty}+\|\sqrt{\rho_0}u\|_{L^\infty})
\|\rho_0^{-\frac{\gamma+2}{2}}\nabla|\tilde{b}|^2\|_{L^2}^2\nonumber\\
&\le C\phi(t)\Big\|\Big(\rho_0^{-\frac{\gamma+2}{2}}\nabla\tilde{b},
 \rho_0^{-\frac{\gamma+2}{2}}\nabla|\tilde{b}|^2, |b|^2\rho_0^{-\frac{\gamma+2}{2}}\Big)\Big\|_{L^2}^2
+C\|\rho_0^{-\frac{\gamma+2}{2}}\nabla|\tilde{b}|^2\|_{L^2}^2.
\end{align}
Noticing that
\begin{align*}
\|\rho_0^{-\frac{\gamma+2}{2}}|\nabla b||b|\|_{L^2}^2&\le C\|\rho_0^{-\frac{\gamma+2}{2}}\nabla |\tilde b (1, 0, 1)|^2\|_{L^2}^2
+C\|\rho_0^{-\frac{\gamma+2}{2}}\nabla|\tilde b (1, 0, -1)|^2\|_{L^2}^2
+\|\rho_0^{-\frac{\gamma+2}{2}}\nabla |\tilde b (1, 1, 0)|^2\|_{L^2}^2\nonumber\\
&\quad+C\|\rho_0^{-\frac{\gamma+2}{2}}\nabla |\tilde b (0, 1, 1)|^2\|_{L^2}^2
+C\|\rho_0^{-\frac{\gamma+2}{2}}\nabla |\tilde b(1, 1, 1)|^2\|_{L^2}^2
+C\|\rho_0^{-\frac{\gamma+2}{2}}\nabla|\tilde b (1, -1, 0)|^2\|_{L^2}^2\nonumber\\
&\quad+C\|\rho_0^{-\frac{\gamma+2}{2}}\nabla|\tilde b (0, 1, -1)|^2\|_{L^2}^2,
\end{align*}
and
\begin{align*}
\|\rho_0^{-\frac{\gamma+2}{2}}|\Delta b||b|\|_{L^2}^2&\le C\|\rho_0^{-\frac{\gamma+2}{2}}|\nabla b|^2\|_{L^2}^2+
C\|\rho_0^{-\frac{\gamma+2}{2}}\Delta |\tilde b (1, 0, 1)|^2\|_{L^2}^2
+C\|\rho_0^{-\frac{\gamma+2}{2}}\Delta|\tilde b (1, 0, -1)|^2\|_{L^2}^2
\nonumber\\
&\quad+C\|\rho_0^{-\frac{\gamma+2}{2}}\Delta |\tilde b (0, 1, 1)|^2\|_{L^2}^2
+C\|\rho_0^{-\frac{\gamma+2}{2}}\Delta |\tilde b(1, 1, 1)|^2\|_{L^2}^2
+C\|\rho_0^{-\frac{\gamma+2}{2}}\Delta|\tilde b (1, -1, 0)|^2\|_{L^2}^2\nonumber\\
&\quad+C\|\rho_0^{-\frac{\gamma+2}{2}}\Delta|\tilde b (0, 1, -1)|^2\|_{L^2}^2
+\|\rho_0^{-\frac{\gamma+2}{2}}\Delta |\tilde b (1, 1, 0)|^2\|_{L^2}^2\nonumber\\
&\le C\|\nabla b\|_{L^\infty}^2\|\rho_0^{-\frac{\gamma+2}{2}}\nabla b\|_{L^2}^2
+C\|\rho_0^{-\frac{\gamma+2}{2}}\Delta |\tilde b (1, 0, 1)|^2\|_{L^2}^2
+C\|\rho_0^{-\frac{\gamma+2}{2}}\Delta|\tilde b (1, 0, -1)|^2\|_{L^2}^2
\nonumber\\
&\quad+C\|\rho_0^{-\frac{\gamma+2}{2}}\Delta |\tilde b (0, 1, 1)|^2\|_{L^2}^2
+C\|\rho_0^{-\frac{\gamma+2}{2}}\Delta |\tilde b(1, 1, 1)|^2\|_{L^2}^2
+C\|\rho_0^{-\frac{\gamma+2}{2}}\Delta|\tilde b (1, -1, 0)|^2\|_{L^2}^2\nonumber\\
&\quad+C\|\rho_0^{-\frac{\gamma+2}{2}}\Delta|\tilde b (0, 1, -1)|^2\|_{L^2}^2
+\|\rho_0^{-\frac{\gamma+2}{2}}\Delta |\tilde b (1, 1, 0)|^2\|_{L^2}^2.
\end{align*}
Thanks to the above two estimates, and applying Gronwall's inequality to \eqref{2.46}, we deduce that
\begin{align}\label{2.47}
&\sup_{0\le t\le T}\|\rho_0^{-\frac{\gamma+2}{2}}|\nabla b||b|\|_{L^2}^2
+\int_0^T\|\rho_0^{-\frac{\gamma+2}{2}}(|\Delta b||b|, \Delta|b|^2)\|_{L^2}^2dt\nonumber\\
&\le C\Big\|\Big(\rho_0^\frac{1-\gamma}{2}u_0,
\rho_0^{1-\frac{\gamma}{2}}\theta_0,
\rho_0^{-\frac{\gamma}{2}}b_0, \rho_0^{-\frac{\gamma+2}{2}}|b_0|^2, \rho_0^{-\frac{\gamma}{2}}\nabla u_0,
\rho_0^{-\frac{\gamma+2}{2}}\nabla b_0, \rho_0^{-\frac{\gamma}{2}}\wp_0\Big)\Big\|_{L^2}^2.
\end{align}

2. Operating $\partial_t+\divv(u\cdot)$ to the $j$th-component of $\eqref{a1}_2$ and multiplying the resulting equation
by $\rho_0^{1-\gamma}\dot{u}^j$, one gets by some calculations that
\begin{align}\label{2.48}
\frac12\frac{d}{dt}\int\rho\rho_0^{1-\gamma}|\dot{u}^j|^2dx&=\frac12\int\rho u\nabla\rho_0^{1-\gamma}|\dot{u}|^2dx
-\int\rho_0^{1-\gamma}\dot{u}^j\big(\partial_jp_t+
\divv(u\partial_jp)\big)dx
\nonumber\\
&\quad+\mu\int\rho_0^{1-\gamma}\dot{u}^j\big(\Delta u_t+\divv(u\Delta u^j)\big)dx\nonumber\\
&\quad+(\mu+\lambda)\int\rho_0^{1-\gamma}\dot{u}^j(\partial_j\divv u_t+\divv(u\partial_j\divv u))dx\nonumber\\
&\quad-\frac12\int\rho_0^{1-\gamma}\dot{u}^j\big(\partial_t\partial_j|b|^2+\divv(u\partial_j|b|^2)\big)dx\nonumber\\
&\quad+\int\rho_0^{1-\gamma}\dot{u}^j\big(\partial_t\partial_i(b^ib^j)+\divv(u\partial_i(b^ib^j))\big)dx\nonumber\\
&\le C\phi(t)\|\sqrt{\rho}\rho_0^\frac{1-\gamma}{2}\dot{u}\|_{L^2}^2+\sum_{i=1}^5U_i.
\end{align}
It follows from integration by parts and Young's inequality that
\begin{align}\label{2.49}
U_1&=-\int\big(\partial_j(p_t+\divv(up))-\divv(\partial_jup)\big)\rho_0^{1-\gamma}\dot{u}^jdx\nonumber\\
&=\widetilde{R}\int[(\rho\theta)_t+\divv(\rho u\theta)]\big(\partial_j\rho_0^{1-\gamma}\dot{u}^j+\rho_0^{1-\gamma}\partial_j\dot{u}^j\big)dx
-\widetilde{R}\int\rho\theta\partial_ju\cdot\big(\nabla\rho_0^{1-\gamma}\dot{u}^j
+\rho_0^{1-\gamma}\nabla\dot{u}^j\big)dx\nonumber\\
&\le C\int\rho|\dot{\theta}|\rho_0^{-\gamma}\rho_0^\frac32|\dot{u}|dx+C\int\rho\rho_0^{1-\gamma}|\dot{\theta}||\nabla\dot{u}|dx\nonumber\\
&\quad+C\int\rho\rho_0^{-\gamma}\rho_0^\frac32\theta|\nabla u||\dot{u}|dx
+C\int\rho\rho_0^{1-\gamma}\theta|\nabla u||\nabla\dot{u}|dx\nonumber\\
&\le C\|\rho_0^{1-\frac{\gamma}{2}}\dot{u}\|_{L^2}^2+C\|\rho_0^\frac{3-\gamma}{2}\dot{\theta}\|_{L^2}^2
+C\|\nabla u\|_{L^\infty}^2\|\rho_0^\frac{3-\gamma}{2}\theta\|_{L^2}^2
+C\|\rho_0^\frac{1-\gamma}{2}\nabla\dot{u}\|_{L^2}^2\nonumber\\
&\le C\|\rho_0^{1-\frac{\gamma}{2}}\dot{u}\|_{L^2}^2+C\|\rho_0^\frac{3-\gamma}{2}\theta_t\|_{L^2}^2
+C\|\nabla u\|_{L^\infty}^2\|\rho_0^\frac{3-\gamma}{2}\theta\|_{L^2}^2\nonumber\\
&\quad+\frac{\mu}{16}\|\rho_0^\frac{1-\gamma}{2}\nabla\dot{u}\|_{L^2}^2
+C\|\sqrt{\rho_0}u\|_{L^\infty}^2\|\rho_0^{1-\frac{\gamma}{2}}\nabla\theta\|_{L^2}^2.
\end{align}
Integration by parts together with \eqref{h1} and Young's inequality yields that
\begin{align}
U_2&=\mu\int\big(\Delta\dot{u}^j+\divv(u\Delta u^j)-\Delta(u\cdot\nabla u^j)\big)\rho_0^{1-\gamma}\dot{u}^jdx\nonumber\\
&=\mu\int(\partial_{kk}\dot{u}^j+\partial_i(u^i\partial_k^2u^j)-\partial_k^2(u^i\partial_iu^j))\rho_0^{1-\gamma}\dot{u}^jdx\nonumber\\
&=\mu\int\big[\partial_{kk}\dot{u}^j+\partial_i\partial_k(u^i\partial_ku^j)-\partial_i(\partial_ku^i\partial_ku^j)
-\partial_k(u^i\partial_k\partial_iu^j)
-\partial_k(\partial_ku^i\partial_iu^j)\big]\rho_0^{1-\gamma}\dot{u}^jdx\nonumber\\
&=\mu\int\big[\partial_{kk}\dot{u}^j+\partial_k(\partial_iu^i\partial_ku^j)-\partial_i(\partial_ku^i\partial_ku^j)
-\partial_k(\partial_ku^i\partial_iu^j)\big]\rho_0^{1-\gamma}\dot{u}^jdx\nonumber\\
&\le -\mu\int\rho_0^{1-\gamma}|\nabla\dot{u}|dx-\int\partial_k\dot{u}^j\partial_k\rho_0^{\gamma-1}\dot{u}^jdx
+C\int|\nabla\rho_0^{1-\gamma}||\dot{u}||\nabla u|^2dx
+\int\rho_0^{1-\gamma}|\nabla\dot{u}||\nabla u|^2dx\nonumber\\
&\le -\frac{3\mu}{4}\|\rho_0^\frac{\gamma-1}{2}\nabla\dot{u}\|_{L^2}^2
+C\|\rho_0^{1-\frac{\gamma}{2}}\dot{u}\|_{L^2}^2+C\|\nabla u\|_{L^\infty}^2\|\rho_0^\frac{1-\gamma}{2}\nabla u\|_{L^2}^2.
\end{align}
Similarly, one finds that
\begin{align}
U_3&=(\mu+\lambda)\int\big(\partial_j\divv\dot{u}-\partial_j\divv(u\cdot\nabla u)
+\divv(u\partial_j\divv u)\big)\rho_0^{1-\gamma}\dot{u}^jdx\nonumber\\
&=(\mu+\lambda)\int\big(\partial_j\divv\dot{u}+\divv\partial_j(u\divv u)-\divv(\partial_ju\divv u)\big)\rho_0^{1-\gamma}\dot{u}^jdx\nonumber\\
&\quad-(\mu+\lambda)\int\big(\partial_j(u\cdot\nabla\divv u+\nabla u_k\cdot\partial_ku)\big)\rho_0^{1-\gamma}\dot{u}^jdx\nonumber\\
&=(\mu+\lambda)\int\big[\partial_j\divv\dot{u}-\partial_j(\nabla u_k\cdot\partial_ku)
-\divv(\partial_ju\divv u)\big]\rho_0^{1-\gamma}\dot{u}^jdx\nonumber\\
&\le -(\mu+\lambda)\|\rho_0^\frac{1-\gamma}{2}\divv u\|_{L^2}^2
-\int\divv\dot{u}\partial_j\rho_0^{1-\gamma}\dot{u}^jdx+C\int|\nabla\rho_0^{1-\gamma}||\dot{u}||\nabla u|^2dx\nonumber\\
&\quad+\int\rho_0^{1-\gamma}|\nabla\dot{u}||\nabla u|^2dx\nonumber\\
&\le -\frac{(\mu+\lambda)}{2}\|\rho_0^\frac{1-\gamma}{2}\divv u\|_{L^2}^2
+\frac{\mu}{16}\|\rho_0^\frac{\gamma-1}{2}\nabla\dot{u}\|_{L^2}^2
+C\|\rho_0^{1-\frac{\gamma}{2}}\dot{u}\|_{L^2}^2
+C\|\nabla u\|_{L^\infty}^2\|\rho_0^\frac{1-\gamma}{2}\nabla u\|_{L^2}^2.
\end{align}
Furthermore, integration by parts together with \eqref{h1} gives that
\begin{align}
U_4&=\int\rho_0^{1-\gamma}\partial_j\dot{u}^jb\cdot b_tdx+\int\partial_j\rho_0^{1-\gamma}\dot{u}^jb\cdot b_tdx
+\frac12\int\rho_0^{1-\gamma}\dot{u}^j\partial_i(u^i\partial_j|b|^2)dx\nonumber\\
&=-\frac12\int u^i\partial_j|b|^2\big(\partial_i\rho_0^{1-\gamma}\dot{u}^j
+\rho_0^{1-\gamma}\partial_i\dot{u}^j\big)dx\nonumber\\
&\quad+\int\rho_0^{1-\gamma}\partial_j\dot{u}^jb\cdot\big(b\cdot\nabla u-u\cdot\nabla b+b\divv u+\nu\Delta b\big)dx\nonumber\\
&\quad+\int\partial_j\rho_0^{1-\gamma}\dot{u}^jb\cdot\big(b\cdot\nabla u-u\cdot\nabla b+b\divv u+\nu\Delta b\big)dx\nonumber\\
&\le C\int\rho_0^{-\gamma}\rho_0^\frac32|u||b||\nabla b||\dot{u}|dx+C\int\rho_0^{1-\gamma}|u||b||\nabla b||\nabla\dot{u}|dx\nonumber\\
&\quad+C\int\rho_0^{1-\gamma}|\nabla\dot{u}||b|^2|\nabla u|dx+C\int\rho_0^{1-\gamma}|\nabla\dot{u}||b||\Delta b|dx\nonumber\\
&\quad+C\int\rho_0^{-\gamma}\rho_0^\frac32|\dot{u}||b|^2|\nabla u|dx
+C\int\rho_0^{-\gamma}\rho_0^\frac32|\dot{u}||u||b||\nabla b|dx\nonumber\\
&\quad+C\int\rho_0^{-\gamma}\rho_0^\frac32|\dot{u}||b||\Delta b|dx\nonumber\\
&\le \frac{\mu}{16}\|\rho_0^\frac{1-\gamma}{2}\nabla\dot{u}\|_{L^2}^2
+C\big(\|\nabla u\|_{L^\infty}^2+\|\sqrt{\rho_0}u\|_{L^\infty}^2+1\big)\|\rho_0^{1-\frac{\gamma}{2}}\dot{u}\|_{L^2}^2\nonumber\\
&\quad+C(1+\|\sqrt{\rho_0}u\|_{L^\infty}^2)\|\rho_0^{-\frac{\gamma}{2}}|b||\nabla b|\|_{L^2}^2
+\|\rho_0^\frac{1-\gamma}{2}|b||\Delta b|\|_{L^2}^2\nonumber\\
&\quad+C(\|\sqrt{\rho_0}u\|_{L^\infty}^2+1)\|\rho_0^{-\frac{\gamma}{2}}|b||\nabla b|\|_{L^2}^2.
\end{align}
Similarly, we obtain that
\begin{align}\label{2.53}
|U_5|&\le \frac{\mu}{16}\|\rho_0^\frac{1-\gamma}{2}\nabla\dot{u}\|_{L^2}^2
+C\big(\|\nabla u\|_{L^\infty}^2+\|\sqrt{\rho_0}u\|_{L^\infty}^2+1\big)\|\rho_0^{1-\frac{\gamma}{2}}\dot{u}\|_{L^2}^2
+\|\rho_0^\frac{1-\gamma}{2}|b||\Delta b|\|_{L^2}^2\nonumber\\
&\quad+C(1+\|\sqrt{\rho_0}u\|_{L^\infty}^2)\|\rho_0^{-\frac{\gamma}{2}}|b||\nabla b|\|_{L^2}^2
+C\big(\|\sqrt{\rho_0}u\|_{L^\infty}^2+1\big)\|\rho_0^{-\frac{\gamma}{2}}|b||\nabla b|\|_{L^2}^2.
\end{align}
Putting \eqref{2.49}--\eqref{2.53} into \eqref{2.48}, one infers from \eqref{h1} and Lemma \ref{l21} that
\begin{align}\label{z2.49}
&\frac{d}{dt}\|\sqrt{\rho}\rho_0^\frac{1-\gamma}{2}\dot{u}\|_{L^2}^2
+\mu\|\rho_0^\frac{1-\gamma}{2}\nabla\dot{u}\|_{L^2}^2+(\mu+\lambda)\|\rho_0^\frac{1-\gamma}{2}\divv \dot{u}\|_{L^2}^2\nonumber\\
&\le C\big(\|\nabla u\|_{L^\infty}^2+\|\sqrt{\rho_0}u\|_{L^\infty}^2+1\big)\|\rho_0^{1-\frac{\gamma}{2}}\dot{u}\|_{L^2}^2
+C\|\rho_0^\frac{1-\gamma}{2}|b||\Delta b|\|_{L^2}^2\nonumber\\
&\quad+C\big(1+\|\sqrt{\rho_0}u\|_{L^\infty}^2\big)\|\rho_0^{-\frac{\gamma}{2}}|b||\nabla b|\|_{L^2}^2
+C\|\rho_0^\frac{3-\gamma}{2}\theta_t\|_{L^2}^2
+C\|\nabla u\|_{L^\infty}^2\|\rho_0^\frac{3-\gamma}{2}\theta\|_{L^2}^2\nonumber\\
&\quad+C\|\nabla u\|_{L^\infty}^2\|\rho_0^\frac{1-\gamma}{2}\nabla u\|_{L^2}^2
+C\|\sqrt{\rho_0}u\|_{L^\infty}^2\|\rho_0^{1-\frac{\gamma}{2}}\nabla\theta\|_{L^2}^2\nonumber\\
&\le C\|\rho_0^\frac{3-\gamma}{2}\theta_t\|_{L^2}^2
+C\|\rho_0^{-\frac{\gamma+2}{2}}|b||\Delta b|\|_{L^2}^2
+C\phi(t)\Big(\|\rho_0^{1-\frac{\gamma}{2}}\dot{u}\|_{L^2}^2
+\|\rho_0^\frac{3-\gamma}{2}\theta\|_{L^2}^2\Big)\nonumber\\
&\quad+C\phi(t)\Big(\|\rho_0^{-\frac{\gamma}{2}}|b||\nabla b|\|_{L^2}^2
+\|\rho_0^{1-\frac{\gamma+2}{2}}\nabla\theta\|_{L^2}^2+\|\rho_0^\frac{1-\gamma}{2}\nabla u\|_{L^2}^2\Big),
\end{align}
which together with Gronwall's inequality, \eqref{2.47}, and Lemmas \ref{l22}--\ref{l24} gives \eqref{2.40}.
\end{proof}

\begin{lemma}\label{l26}
There exists a positive constant $C$ depending only on $\mu$, $\lambda$, $\widetilde{R}$, $c_v$, $\kappa$, $\nu$, $K_1$, $K_2$, and $\Phi_T$ such that
\begin{align}\label{z2.50}
&\sup_{0\le t\le T}\Big\|\Big(\rho_0^{1-\frac{\gamma}{2}}u_t, \rho_0^{-\frac{\gamma}{2}}b_t\Big)\Big\|_{L^2}^2
+\int_0^T\Big\|\Big(\rho_0^\frac{1-\gamma}{2}\nabla u_t, \rho_0^{-\frac{\gamma}{2}}\nabla b_t\Big)\Big\|_{L^2}^2dt\nonumber\\
&\le C\Big\|\Big(\rho_0^\frac{1-\gamma}{2}u_0,
\rho_0^{1-\frac{\gamma}{2}}\theta_0,
\rho_0^{-\frac{\gamma}{2}}b_0, \rho_0^{-\frac{\gamma+2}{2}}|b_0|^2, \rho_0^{-\frac{\gamma}{2}}\nabla u_0,
\rho_0^{-\frac{\gamma+2}{2}}\nabla b_0, \rho_0^{-\frac{\gamma}{2}}\nabla\theta_0, \rho_0^{-\frac{\gamma}{2}}\wp_0\Big)\Big\|_{L^2}^2,
\end{align}
where $\wp_0$ is defined as in Lemma \ref{l25}.
\end{lemma}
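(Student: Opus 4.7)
I would differentiate the momentum equation $\eqref{a1}_2$ and the induction equation $\eqref{a1}_4$ in time, test against the weighted quantities $\rho_0^{1-\gamma}u_t$ and $\rho_0^{-\gamma}b_t$ respectively, and close the coupled energy inequalities via Gronwall, relying on Lemmas~\ref{l21}--\ref{l25} and $\Phi_T<\infty$. The naive route $u_t=\dot u-u\cdot\nabla u$ is blocked because there is no weighted $L^2L^2$-bound on $\nabla(u\cdot\nabla u)$ at hand. Hypothesis \eqref{h2} enters through the pointwise inequality $|\nabla^2\rho_0^\beta|\le C\rho_0^{\beta+1}$ (for $\beta=1-\gamma$ and $\beta=-\gamma$), obtained by combining \eqref{h1} with \eqref{h2}; this bound is used whenever a double integration by parts forces $\Delta\rho_0^\beta$ to land on $|u_t|^2$ or $|b_t|^2$.

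\textbf{Step 1 (magnetic side).} Differentiating $\eqref{a1}_4$ in $t$ and testing against $\rho_0^{-\gamma}b_t$ produces the leading term $\frac12\frac{d}{dt}\|\rho_0^{-\gamma/2}b_t\|_{L^2}^2$ and the dissipation $\nu\|\rho_0^{-\gamma/2}\nabla b_t\|_{L^2}^2$, together with a boundary piece $\int \nabla b_t\cdot b_t\cdot\nabla\rho_0^{-\gamma}\,dx=-\tfrac12\int|b_t|^2\Delta\rho_0^{-\gamma}\,dx$ absorbed via $|\Delta\rho_0^{-\gamma}|\le C\rho_0^{1-\gamma}$ (first use of \eqref{h2}). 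The six right-hand side terms are handled by Cauchy--Schwarz combined with the $L^\infty$ quantities packed into $\phi(t)$: the transport contribution $(u\cdot\nabla b_t)\cdot\rho_0^{-\gamma}b_t$ is symmetrised using $\divv b=0$, while $(u_t\cdot\nabla b)$, $(b_t\cdot\nabla u)$, $(b_t\divv u)$ close by $\phi$-absorption, and the genuine cross-terms $(b\cdot\nabla u_t)\cdot\rho_0^{-\gamma}b_t$ and $(b\divv u_t)\cdot\rho_0^{-\gamma}b_t$ are controlled by $C\|b\|_{L^\infty}\|\rho_0^{-\gamma/2}b_t\|_{L^2}\|\rho_0^{(1-\gamma)/2}\nabla u_t\|_{L^2}$ and held for absorption by the $u_t$-dissipation in Step~2.

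\textbf{Step 2 (velocity side).} Applying $\partial_t$ to $\eqref{a1}_2$ and testing against $\rho_0^{1-\gamma}u_t$, while using $\rho_t=-\divv(\rho u)$ to decompose $(\rho u_t)_t=\rho u_{tt}+\rho_tu_t$, yields the leading term $\frac12\frac{d}{dt}\|\sqrt{\rho}\rho_0^{(1-\gamma)/2}u_t\|_{L^2}^2$ and the viscous dissipation $\mu\|\rho_0^{(1-\gamma)/2}\nabla u_t\|_{L^2}^2+(\mu+\lambda)\|\rho_0^{(1-\gamma)/2}\divv u_t\|_{L^2}^2$, modulo a boundary piece absorbed via $|\Delta\rho_0^{1-\gamma}|\le C\rho_0^{2-\gamma}$ (second use of \eqref{h2}). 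The pressure piece $\nabla p_t=\widetilde R\nabla(\rho\theta)_t$ is integrated by parts and, after substituting $\rho_t=-\divv(\rho u)$, reduces to contributions controlled by $\|\rho_0^{(3-\gamma)/2}\theta_t\|_{L^2}^2$ (Lemma~\ref{l24}) and $\phi(t)$ times quantities from Lemmas~\ref{l22}--\ref{l23}. The magnetic pieces $(b\cdot\nabla b-\tfrac12\nabla|b|^2)_t$ are expanded and integrated by parts using $\divv b=0$, producing the coupled term $\int\rho_0^{1-\gamma}(b\otimes b_t)\!:\!\nabla u_t\,dx$ (partnered with Step~1's cross-term) plus lower-order pieces bounded by $\phi(t)\|\rho_0^{-\gamma/2}b_t\|_{L^2}^2$ and a small fraction of the dissipation.

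\textbf{Closure and main obstacle.} Summing Steps~1 and~2 with a sufficiently large positive weight on Step~1, the coupled cross-terms involving $b$, $b_t$, and $\nabla u_t$ are absorbed into the dissipations using $\|b\|_{L^\infty}^2\le\phi(t)$ and Cauchy--Schwarz, each residual factor being designed to land on a norm already controlled in Lemmas~\ref{l22}--\ref{l25}. This yields $\frac{d}{dt}\mathcal E(t)+\mathcal D(t)\le C\phi(t)\mathcal E(t)+Ch(t)$ with $h\in L^1(0,T)$ and $\int_0^T\phi<\infty$, from which \eqref{z2.50} follows upon integration. The initial value $\mathcal E(0)$ is bounded by evaluating the PDEs at $t=0$: the compatibility condition gives $\rho_0u_t(0)=\wp_0-\rho_0u_0\cdot\nabla u_0$ and $b_t(0)=\nu\Delta b_0+b_0\cdot\nabla u_0-u_0\cdot\nabla b_0-b_0\divv u_0$, whose weighted $L^2$-norms are precisely bounded by the quantities on the right of \eqref{z2.50} (in particular the appearance of $\rho_0^{-\gamma/2}\wp_0$). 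The main obstacle is the simultaneous bookkeeping of four coupled cross-terms between $u_t$, $\nabla u_t$, $b_t$, $\nabla b_t$, each dressed with $b$ or $\nabla u$: the successful absorption requires \eqref{h2} (to handle the $\Delta\rho_0^\beta$ boundary contributions from double integrations by parts) together with a delicate matching of the singular weights $\rho_0^{-\gamma/2}$, $\rho_0^{(1-\gamma)/2}$, and $\rho_0^{-(\gamma+2)/2}$ across Lemmas~\ref{l22}--\ref{l25}; this weighted $b_t$-estimate is the step that has no counterpart in \cite{LX23}.
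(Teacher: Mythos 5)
Your overall strategy (differentiate both $\eqref{a1}_2$ and $\eqref{a1}_4$ in time, test against $\rho_0^{1-\gamma}u_t$ and $\rho_0^{-\gamma}b_t$, combine and apply Gronwall, with the initial data controlled through the compatibility conditions $\rho_0 u_t(0)=\wp_0-\rho_0u_0\cdot\nabla u_0$ and $b_t(0)=\nu\Delta b_0+\cdots$) is exactly the paper's. But the step you flag as "the genuine cross-terms... held for absorption by the $u_t$-dissipation" contains an arithmetic error in the weights that breaks the closure.

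Consider the cross-term $\bar K_2=\int b\cdot\nabla u_t\cdot b_t\,\rho_0^{-\gamma}\,dx$ arising from the $b\cdot\nabla u_t$ term in $\partial_t\eqref{a1}_4$ tested against $\rho_0^{-\gamma}b_t$. You claim it is bounded by $C\|b\|_{L^\infty}\|\rho_0^{-\gamma/2}b_t\|_{L^2}\|\rho_0^{(1-\gamma)/2}\nabla u_t\|_{L^2}$. The two $L^2$ factors carry total weight $\rho_0^{-\gamma/2}\cdot\rho_0^{(1-\gamma)/2}=\rho_0^{1/2-\gamma}$, whereas the integrand has weight $\rho_0^{-\gamma}$; the discrepancy is $\rho_0^{-1/2}$, which is unbounded because $\rho_0\to 0$ at infinity. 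To make your splitting correct one would need $\|\rho_0^{-1/2}b\|_{L^\infty}$, a quantity not contained in $\phi(t)$ and not controlled by any of Lemmas~\ref{l21}--\ref{l25}. Alternatively splitting so that $\nabla u_t$ carries the weight $\rho_0^{-\gamma/2}$ gives the clean factor $\|b\|_{L^\infty}$, but then the resulting norm $\|\rho_0^{-\gamma/2}\nabla u_t\|_{L^2}$ is genuinely \emph{stronger} than the dissipation $\|\rho_0^{(1-\gamma)/2}\nabla u_t\|_{L^2}$ supplied by Step~2 and cannot be absorbed. So the naive Cauchy--Schwarz on this cross-term fails precisely because of the singular-weight bookkeeping you identify as "the main obstacle." The paper's actual resolution is to substitute $b_t=\nu\Delta b+b\cdot\nabla u-u\cdot\nabla b-b\divv u$ inside $\bar K_2+\bar K_4$ \emph{before} applying Cauchy--Schwarz; this trades the inaccessible $\rho_0^{-\gamma/2}b_t$ for the quantities $\rho_0^{-(\gamma+2)/2}|b||\Delta b|$, $\rho_0^{-(1+\gamma)/2}|b|^2$, $\rho_0^{-(\gamma+2)/2}|b||\nabla b|$, all of which are controlled by Lemmas~\ref{l23} and~\ref{l25} and carry the right powers to pair with $\rho_0^{(1-\gamma)/2}\nabla u_t$. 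Without this substitution the coupled system does not close.

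One smaller point: you suggest \eqref{h2} enters through a double integration by parts producing $\Delta\rho_0^{-\gamma}$ acting on $|b_t|^2$. The paper does not integrate by parts a second time on that boundary term, but bounds $\bar K_1$ directly by Cauchy--Schwarz using only \eqref{h1}; the hypothesis \eqref{h2} instead enters on the velocity side in the pressure/density cross-term $\bar J_6$, where $\nabla^2\rho_0^{1-\gamma}$ appears after moving $\divv(\rho u)$ across and needs $|\nabla^2\rho_0^{1-\gamma}|\le C\rho_0^{2-\gamma}$. Your route would also work and would introduce \eqref{h2} at a different spot, so this is a harmless stylistic difference; the weight mismatch in $\bar K_2$ is the genuine gap.
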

\begin{proof}[Proof]
1. Differentiating $\eqref{a1}_2$ with respect to $t$ gives that
\begin{align}\label{2.56}
&\rho u_{tt}+\rho u\cdot\nabla u_t-\mu\Delta u_t-(\mu+\lambda)\nabla\divv u_t\nonumber\\
&=-\rho_t(u_t+u\cdot\nabla u)-\rho u_t\cdot\nabla u-\nabla p_t+\Big(b\cdot\nabla b-\frac12\nabla|b|^2\Big)_t.
\end{align}
Multiplying \eqref{2.56} by $\rho_0^{1-\gamma}u_t$ and integrating the resulting equation over $\mathbb{R}^3$,
we obtain from $\eqref{a1}_1$ that
\begin{align}\label{2.57}
&\frac12\frac{d}{dt}\int\rho_0^{1-\gamma}\rho|u_t|^2dx+\mu\int|\nabla u_t|^2\rho_0^{1-\gamma}dx
+(\mu+\lambda)\int(\divv u_t)^2\rho_0^{1-\gamma}dx\nonumber\\
&=-\mu\int (\nabla\rho_0^{1-\gamma}\cdot\nabla) u_t\cdot u_tdx
-(\mu+\lambda)\int \divv u_t u_t\cdot\nabla\rho_0^{1-\gamma}dx\nonumber\\
&\quad-\int\rho_t|u_t|^2\rho_0^{1-\gamma}dx-\int\rho_tu\cdot\nabla u\cdot u_t\rho_0^{1-\gamma}dx
+\int p_t\divv u_t\rho_0^{1-\gamma}dx\nonumber\\
&\quad+\int p_t u_t\cdot\nabla\rho_0^{1-\gamma}dx-\int\rho u_t\cdot\nabla u\cdot u_t\rho_0^{1-\gamma}dx
+\int\Big(b\cdot\nabla b-\frac12\nabla|b|^2\Big)_t\cdot u_t\rho_0^{1-\gamma}dx
\triangleq\sum_{i=1}^8\bar{J}_i,
\end{align}
where we have used the following simple facts
\begin{align*}
&\frac12\int\rho_0^{1-\gamma}\rho\frac{d}{dt}|u_t|^2dx+\int\rho_0^{1-\gamma}\rho u \cdot\nabla u_t\cdot u_tdx
=\frac12\frac{d}{dt}\int\rho_0^{1-\gamma}\rho|u_t|^2dx,\\
&-\int\Delta u_t\cdot u_t\rho_0^{1-\gamma}dx=\int|\nabla u_t|^2\rho_0^{1-\gamma}dx+\int (\nabla\rho_0^{1-\gamma}\cdot\nabla) u_t\cdot u_tdx,\\
&-\int\nabla\divv u_t \rho_0^{1-\gamma}u_tdx=\int(\divv u_t)^2\rho_0^{1-\gamma}dx+\int \divv u_t u_t\cdot\nabla\rho_0^{1-\gamma}dx.
\end{align*}
It follows from \eqref{h1} and Cauchy-Schwarz inequality that
\begin{align*}
\bar{J}_1+\bar{J}_2&\le \frac{\mu}{10}\|\rho_0^\frac{1-\gamma}{2}\nabla u_t\|_{L^2}^2
+\frac{\mu+\lambda}{4}\|\rho_0^\frac{1-\gamma}{2}\divv u_t\|_{L^2}^2+C\|\rho_0^{1-\frac{\gamma}{2}}u_t\|_{L^2}^2.
\end{align*}
By \eqref{h1}, \eqref{h2}, and Lemma \ref{l21}, we obtain after integration by parts that
\begin{align*}
\bar{J}_3&=\int\divv(\rho u)|u_t|^2\rho_0^{1-\gamma}dx\nonumber\\
&=-\int\rho u\cdot\nabla |u_t|^2\rho_0^{1-\gamma}dx
-\int\rho u|u_t|^2\cdot\nabla\rho_0^{1-\gamma}dx\nonumber\\
&\le \frac{\mu}{10}\|\rho_0^\frac{1-\gamma}{2}\nabla u_t\|_{L^2}^2
+C\|\sqrt{\rho_0}u\|_{L^\infty}^2\|\rho_0^{1-\frac{\gamma}{2}}u_t\|_{L^2}^2,\\
\bar{J}_4&=\int\divv(\rho u)(u\cdot\nabla u)\cdot u_t\rho_0^{1-\gamma}dx\nonumber\\
&=-\int\rho u^j\partial_ju^k\partial_ku^iu^i_t\rho_0^{1-\gamma}dx-\int\rho u^ju^k\partial_j\partial_ku^iu^i_t\rho_0^{1-\gamma}dx\nonumber\\
&\quad-\int\rho u^ju^k\partial_ku^i\partial_ju^i_t\rho_0^{1-\gamma}dx-\int\rho u^ju^k\partial_ku^iu^i_t\partial_j\rho_0^{1-\gamma}dx\nonumber\\
&\le -\int\rho u^j\partial_ju^k\partial_ku^iu^i_t\rho_0^{1-\gamma}dx-C\int\rho_0u^ju^k\partial_j\partial_ku^iu^i_t\rho_0^{1-\gamma}dx\nonumber\\
&\quad-\int\rho u^ju^k\partial_ku^i\partial_ju^i_t\rho_0^{1-\gamma}dx-\int\rho u^ju^k\partial_ku^iu^i_t\partial_j\rho_0^{1-\gamma}dx\nonumber\\
&=-\int\rho u^j\partial_ju^k\partial_ku^iu^i_t\rho_0^{1-\gamma}dx-\int\rho u^ju^k\partial_ku^i\partial_ju^i_t\rho_0^{1-\gamma}dx\nonumber\\
&\quad-\int\rho u^ju^k\partial_ku^iu^i_t\partial_j\rho_0^{1-\gamma}dx+C\int\partial_k\rho_0u^ju^k\partial_ju^iu^i_t\rho_0^{1-\gamma}dx\nonumber\\
&\quad+C\int\rho_0u^j\partial_ku^k\partial_ju^iu^i_t\rho_0^{1-\gamma}dx
+C\int\rho_0u^ju^k\partial_ju^i\partial_ku^i_t\rho_0^{1-\gamma}dx\nonumber\\
&\quad+C\int\rho_0u^ju^k\partial_ju^iu^i_t\partial_k\rho_0^{1-\gamma}dx\nonumber\\
&\le C\int\rho\rho_0^{1-\gamma}|u||\nabla u|^2|u_t|dx+C\int\rho\rho_0^{1-\gamma}|u|^2|\nabla u||\nabla u_t|dx\nonumber\\
&\quad+C\int\rho|u|^2|\nabla u||u_t||\nabla\rho_0^{1-\gamma}|dx+C\int\rho_0^{1-\gamma}|\nabla\rho_0||u|^2|\nabla u||u_t|dx\nonumber\\
&\quad+C\int\rho_0^{2-\gamma}|u||\nabla u|^2|u_t|dx+C\int\rho_0^{2-\gamma}|u|^2|\nabla u||\nabla u_t|dx\nonumber\\
&\quad+C\int\rho_0|\nabla\rho_0^{1-\gamma}||u|^2|\nabla u||u_t|dx\nonumber\\
&\le \frac{\mu}{10}\|\rho_0^\frac{1-\gamma}{2}\nabla u_t\|_{L^2}^2
+C\|\nabla u\|_{L^\infty}^2\Big(\|\rho_0^{1-\frac{\gamma}{2}}u\|_{L^2}^2
+\|\rho_0^{1-\frac{\gamma}{2}}u_t\|_{L^2}^2\Big)\nonumber\\
&\quad+C\|\sqrt{\rho_0}u\|_{L^\infty}^2\Big(\|\rho_0^{1-\frac{\gamma}{2}}u_t\|_{L^2}^2+C\|\rho_0^\frac{1-\gamma}{2}\nabla u\|_{L^2}^2\Big)\nonumber\\
&\quad+C\|\nabla u\|_{L^\infty}\|u\|_{L^6}\|\rho^\frac{3-\gamma}{2}u\|_{L^6}\|\rho_0^\frac{1-\gamma}{2}\nabla u_t\|_{L^2}\nonumber\\
&\le \frac{\mu}{10}\|\rho_0^\frac{1-\gamma}{2}\nabla u_t\|_{L^2}^2
+C\phi(t)\Big(\|\rho_0^{1-\frac{\gamma}{2}}u\|_{L^2}^2+\|\rho_0^{1-\frac{\gamma}{2}}u_t\|_{L^2}^2
+\|\rho_0^\frac{1-\gamma}{2}\nabla u\|_{L^2}^2\Big),\\
\bar{J}_5&=\widetilde{R}\int(\rho_t\theta+\rho\theta_t)\divv u_t\rho_0^{1-\gamma}dx\nonumber\\
&=-\widetilde{R}\int\divv(\rho u)\theta\divv u_t\rho_0^{1-\gamma}dx
+\widetilde{R}\int\rho\theta_t\divv u_t\rho_0^{1-\gamma}dx\nonumber\\
&=-\widetilde{R}\int\partial_j(\rho u^j)\theta\partial_ku^k_t\rho_0^{1-\gamma}dx
+\widetilde{R}\int\rho\theta_t\divv u_t\rho_0^{1-\gamma}dx\nonumber\\
&=\widetilde{R}\int\rho u^j\partial_j\theta\partial_ku^k_t\rho_0^{1-\gamma}dx+\widetilde{R}\int\rho u^j\theta\partial_j\partial_ku^k_t\rho_0^{1-\gamma}dx\nonumber\\
&\quad+\widetilde{R}\int\rho u^j\theta\partial_ku^k_t\partial_j\rho_0^{1-\gamma}dx
+\widetilde{R}\int\rho\theta_t\divv u_t\rho_0^{1-\gamma}dx\nonumber\\
&\le \widetilde{R}\int\rho u^j\partial_j\theta\partial_ku^k_t\rho_0^{1-\gamma}dx+C\widetilde{R}\int\rho_0 u^j\theta\partial_j\partial_ku^k_t\rho_0^{1-\gamma}dx\nonumber\\
&\quad+\widetilde{R}\int\rho u^j\theta\partial_ku^k_t\partial_j\rho_0^{1-\gamma}dx
+\widetilde{R}\int\rho\theta_t\divv u_t\rho_0^{1-\gamma}dx\nonumber\\
&=\widetilde{R}\int\rho u^j\partial_j\theta\partial_ku^k_t\rho_0^{1-\gamma}dx+\widetilde{R}\int\rho u^j\theta\partial_ku^k_t\partial_j\rho_0^{1-\gamma}dx\nonumber\\
&\quad+\widetilde{R}\int\rho\theta_t\divv u_t\rho_0^{1-\gamma}dx-C\widetilde{R}\int\partial_k\rho_0 u^j\theta\partial_ju^k_t\rho_0^{1-\gamma}dx\nonumber\\
&\quad-C\widetilde{R}\int\rho_0\partial_ku^j\theta\partial_ju^k_t\rho_0^{1-\gamma}dx
-C\widetilde{R}\int\rho_0u^j\partial_k\theta\partial_ju^k_t\rho_0^{1-\gamma}dx\nonumber\\
&\quad-C\widetilde{R}\int\rho_0\partial_k\rho_0^{1-\gamma}u^j\theta\partial_ju^kdx\nonumber\\
&\le C\int\rho\rho_0^{1-\gamma}|u||\nabla\theta||\nabla u_t|dx
+C\int\rho\theta|u||\nabla u_t||\nabla\rho_0^{1-\gamma}|dx\nonumber\\
&\quad+C\int\rho\theta_t|\divv u_t|\rho_0^{1-\gamma}dx
+C\int\rho_0^{1-\gamma}|\theta||u||\nabla\rho_0||\nabla u_t|dx\nonumber\\
&\quad+C\int\rho_0^{2-\gamma}|\theta||\nabla u||\nabla u_t|dx
+C\int\rho_0^{2-\gamma}|u||\nabla\theta||\nabla u_t|dx\nonumber\\
&\quad+C\int\rho|\nabla\rho_0^{1-\gamma}||u||\theta||\nabla u|dx\nonumber\\
&\le \frac{\mu}{10}\|\rho_0^\frac{1-\gamma}{2}\nabla u_t\|_{L^2}^2
+C\|\sqrt{\rho_0}u\|_{L^\infty}^2\Big(\|\rho_0^{1-\frac{\gamma}{2}}\nabla\theta\|_{L^2}^2
+\|\rho_0^{1-\frac{\gamma}{2}}\theta\|_{L^2}^2\Big)\nonumber\\
&\quad+\frac{\mu+\lambda}{4}\|\rho_0^\frac{1-\gamma}{2}\divv u\|_{L^2}^2
+C\|\rho_0^\frac{3-\gamma}{2}\theta_t\|_{L^2}^2
+C\|\nabla u\|_{L^\infty}^2\|\rho_0^{1-\frac{\gamma}{2}}\theta\|_{L^2}^2
+C\|\rho_0^{1-\frac{\gamma}{2}}\nabla u\|_{L^2}^2,\\
\bar{J}_6&=\widetilde{R}\int(\rho_t\theta+\rho\theta_t)\cdot u_t\nabla\rho_0^{1-\gamma}dx\nonumber\\
&=-\widetilde{R}\int\divv(\rho u)\theta u_t\cdot\nabla \rho_0^{1-\gamma}dx
+\widetilde{R}\int\rho \theta_tu_t\cdot\nabla \rho_0^{1-\gamma}dx\nonumber\\
&=-\widetilde{R}\int\partial_j(\rho u^j)\theta u_t^k\partial_k\rho_0^{1-\gamma}dx
+\widetilde{R}\int\rho \theta_t u_t\cdot\nabla \rho_0^{1-\gamma}dx\nonumber\\
&=\widetilde{R}\int\rho u^j\partial_j\theta u_t^k\partial_k\rho_0^{1-\gamma}dx
+\widetilde{R}\int\rho u^j\theta\partial_j u_t^k\partial_k\rho_0^{1-\gamma}dx\nonumber\\
&\quad+\widetilde{R}\int\rho u^j\theta u_t^k\partial_j \partial_k\rho_0^{1-\gamma}dx+\widetilde{R}\int\rho \theta_t u_t\cdot\nabla \rho_0^{1-\gamma}dx\nonumber\\
&\le \frac{\mu}{10}\|\rho_0^\frac{1-\gamma}{2}\nabla u_t\|_{L^2}^2+C\|\sqrt{\rho_0}u\|_{L^\infty}^2\|\sqrt{\rho_0}^{1-\frac{\gamma}{2}}u_t\|_{L^2}^2
+C\|\rho_0^{1-\frac{\gamma}{2}}\nabla\theta\|_{L^2}^2\nonumber\\
&\quad+C\|\sqrt{\rho_0}u\|_{L^\infty}^2\|\rho_0^{1-\frac{\gamma}{2}}\theta\|_{L^2}^2
+C\|\rho_0^\frac{3-\gamma}{2}\theta_t\|_{L^2}^2+C\|\rho_0^{1-\frac{\gamma}{2}}u_t\|_{L^2}^2,\\
\bar{J}_7&\le C\|\nabla u\|_{L^\infty}\|\sqrt{\rho}\rho_0^\frac{1-\gamma}{2}u_t\|_{L^2}^2
\le C\phi(t)\|\sqrt{\rho}\rho_0^\frac{1-\gamma}{2}u_t\|_{L^2}^2,\\
\bar{J}_8&=\int\Big(b\cdot\nabla b-\frac12\nabla|b|^2\Big)_t\cdot u_t\rho_0^{1-\gamma}dx\nonumber\\
&=\int \divv(b\otimes b)_t\cdot u_t\rho_0^{1-\gamma}dx+\int b\cdot b_t\divv u_t\rho_0^{1-\gamma}dx
+\int b\cdot b_t\cdot u_t\cdot\nabla\rho_0^{1-\gamma}dx\nonumber\\
&=-\int (b\otimes b)_t:\nabla u_t\rho_0^{1-\gamma}dx-\int (b\otimes b):u_t\otimes\nabla\rho_0^{1-\gamma}\cdot dx
+\int b\cdot b_t\divv u_t\rho_0^{1-\gamma}dx\nonumber\\
&\quad+\int b\cdot b_t\cdot u_t\cdot\nabla\rho_0^{1-\gamma}dx\nonumber\\
&\le -\int b\otimes(\nu\Delta b+b\cdot\nabla u-u\cdot\nabla b-b\divv u):\nabla u_t\rho_0^{1-\gamma}dx\nonumber\\
&\quad-\int(\nu\Delta b+b\cdot\nabla u-u\cdot\nabla b-b\divv u)\otimes b:\nabla u_t\rho_0^{1-\gamma}dx\nonumber\\
&\quad+\int b\cdot(\nu\Delta b+b\cdot\nabla u-u\cdot\nabla b-b\divv u)\divv u_t\rho_0^{1-\gamma}dx\nonumber\\
&\quad+\int b\cdot(\nu\Delta b+b\cdot\nabla u-u\cdot\nabla b-b\divv u)\cdot u_t\cdot\nabla\rho_0^{1-\gamma}dx\nonumber\\
&\quad-\int (b\otimes b):\nabla\rho_0^{1-\gamma}\otimes u_tdx\nonumber\\
&\le \frac{\mu}{10}\|\rho_0^\frac{1-\gamma}{2}\nabla u_t\|_{L^2}^2
+C\||b||\Delta b|\rho_0^\frac{1-\gamma}{2}\|_{L^2}^2+C\|b\|_{L^\infty}^2\|\rho_0^\frac{1-\gamma}{2}\nabla u\|_{L^2}^2\nonumber\\
&\quad+C\|\sqrt{\rho_0}u\|_{L^\infty}^2\||b||\nabla b|\rho_0^{-\frac{\gamma}{2}}\|_{L^2}^2
+C\|b\|_{L^\infty}^2\|\rho_0^{1-\frac{\gamma}{2}}u_t\|_{L^2}^2.
\end{align*}
Inserting the above estimates on $\bar{J}_i\ (i=1, 2,\cdots, 8)$ into \eqref{2.57}, we arrive at
\begin{align}\label{2.58}
&\frac{d}{dt}\|\sqrt{\rho}\rho_0^\frac{1-\gamma}{2}u_t\|_{L^2}^2
+\mu\|\rho_0^\frac{1-\gamma}{2}\nabla u_t\|_{L^2}^2
+(\mu+\lambda)\|\rho_0^\frac{1-\gamma}{2}\divv u_t\|_{L^2}^2\nonumber\\
&\le C\phi(t)\Big(\|\rho_0^{1-\frac{\gamma}{2}}u\|_{L^2}^2
+\||b||\nabla b|\rho_0^{-\frac{\gamma}{2}}\|_{L^2}^2
+\|\rho_0^\frac{1-\gamma}{2}\nabla u\|_{L^2}^2+\|\rho_0^{1-\frac{\gamma}{2}}\nabla\theta\|_{L^2}^2
+\|\rho_0^{1-\frac{\gamma}{2}}\theta\|_{L^2}^2\Big)\nonumber\\
&\quad+C\phi(t)\|\sqrt{\rho}\rho_0^\frac{1-\gamma}{2}u_t\|_{L^2}^2
+C\||b||\Delta b|\rho_0^\frac{1-\gamma}{2}\|_{L^2}^2+C\|\rho_0^\frac{3-\gamma}{2}\theta_t\|_{L^2}^2.
\end{align}

2. Differentiating $\eqref{a1}_4$ with respect to $t$ shows that
\begin{align}\label{2.59}
b_{tt}-b_t\cdot\nabla u-b\cdot\nabla u_t+u_t\cdot\nabla b
+u\cdot\nabla b_t+b_t\divv u+b\divv u_t=\nu\Delta b_t.
\end{align}
Multiplying \eqref{2.59} by $b_t\rho_0^{-\gamma}$ and integrating the resulting equation over $\mathbb{R}^3$ yield that
\begin{align}\label{2.60}
&\frac12\frac{d}{dt}\int|b_t|^2\rho_0^{-\gamma}dx+\nu\int|\nabla b_t|^2\rho_0^{-\gamma}dx\nonumber\\
&=-\nu\int b_t\cdot\nabla b_t\cdot\nabla\rho_0^{-\gamma}dx+\int b\cdot\nabla u_t\cdot b_t\rho_0^{-\gamma}dx
-\int u_t\cdot\nabla b\cdot b_t\rho_0^{-\gamma}dx\nonumber\\
&\quad-\int b\cdot b_t\divv u_t\rho_0^{-\gamma}dx+\int b_t\cdot \nabla u\cdot b_t\rho_0^{-\gamma}dx
-\int u\cdot\nabla b_t\cdot b_t\rho_0^{-\gamma}dx\nonumber\\
&\quad-\int |b_t|^2\divv u\rho_0^{-\gamma}dx\triangleq\sum_{i=1}^7\bar{K}_i.
\end{align}
By \eqref{h1} and Cauchy-Schwarz inequality, we arrive at
\begin{align*}
\bar{K}_1+\bar{K}_5+\bar{K}_7
&\le C\|\rho_0\|_{L^\infty}^\frac12\|\rho_0^{-\frac{\gamma}{2}}\nabla b_t\|_{L^2}
\|\rho_0^{-\frac{\gamma}{2}}b_t\|_{L^2}
+C\|\nabla u\|_{L^\infty}
\|\rho_0^{-\frac{\gamma}{2}}b_t\|_{L^2}^2\nonumber\\
&\le \frac{\nu}{4}\|\rho_0^{-\frac{\gamma}{2}}\nabla b_t\|_{L^2}^2
+C(\|\nabla u\|_{L^\infty}+1)\|\rho_0^{-\frac{\gamma}{2}}b_t\|_{L^2}^2\nonumber\\
&\le \frac{\nu}{6}\|\rho_0^{-\frac{\gamma}{2}}\nabla b_t\|_{L^2}^2+C\phi(t)\|\rho_0^{-\frac{\gamma}{2}}b_t\|_{L^2}^2.
\end{align*}
Using $\eqref{a1}_4$, H\"older's inequality, and Young's inequality, we deduce that
\begin{align*}
\bar{K}_2+\bar{K}_4
&=\int b\cdot\nabla u_t\cdot(\nu\Delta b+b\cdot\nabla u-u\cdot\nabla b-b\divv u)\rho_0^{-\gamma}dx\nonumber\\
& \quad +\int b\cdot(\nu\Delta b+b\cdot\nabla u-u\cdot\nabla b-b\divv u)\divv u_t\rho_0^{-\gamma}dx \notag \\
&\le C\|\rho_0^\frac{1-\gamma}{2}\nabla u_t\|_{L^2}\||b||\Delta b|\rho_0^{-\frac{1+\gamma}{2}}\|_{L^2}
+C\|\nabla u\|_{L^\infty}\|\rho_0^\frac{1-\gamma}{2}\nabla u_t\|_{L^2}\||b|^2\rho_0^{-\frac{1+\gamma}{2}}\|_{L^2}\nonumber\\
&\quad+C\|\sqrt{\rho_0}u\|_{L^\infty}\|\rho_0^\frac{1-\gamma}{2}\nabla u_t\|_{L^2}\||b||\nabla b|\rho_0^{-\frac{2+\gamma}{2}}\|_{L^2}\nonumber\\
&\le \frac{\mu}{6}\|\rho_0^\frac{1-\gamma}{2}\nabla u_t\|_{L^2}^2+C\||b||\Delta b|\rho_0^{-\frac{1+\gamma}{2}}\|_{L^2}^2
+C\|\nabla u\|_{L^\infty}^2\||b|^2\rho_0^{-\frac{1+\gamma}{2}}\|_{L^2}^2\nonumber\\
&\quad+C\|\sqrt{\rho_0}u\|_{L^\infty}^2\||b||\nabla b|\rho_0^{-\frac{2+\gamma}{2}}\|_{L^2}^2\nonumber\\
&\le \frac{\mu}{6}\|\rho_0^\frac{1-\gamma}{2}\nabla u_t\|_{L^2}^2
+C\phi(t)\Big(\||b|^2\rho_0^{-\frac{1+\gamma}{2}}\|_{L^2}^2
+\||b||\nabla b|\rho_0^{-\frac{2+\gamma}{2}}\|_{L^2}^2\Big)
+C\||b||\Delta b|\rho_0^{-\frac{1+\gamma}{2}}\|_{L^2}^2.
\end{align*}
We obtain from integration by parts that
\begin{align}\label{z2.55}
& \int u_t\cdot\nabla b\cdot\Delta b\rho_0^{-\gamma}dx \notag \\
& =\int u_t^i\partial_i b^j\partial_{kk} b^j\rho_0^{-\gamma}dx \notag \\
& =-\int \partial_{k}u_t^i\partial_i b^j\partial_{k} b^j\rho_0^{-\gamma}dx
-\int u_t^i\partial_{ik} b^j\partial_{k} b^j\rho_0^{-\gamma}dx
-\int u_t^i\partial_i b^j\partial_{k} b^j\partial_{k}\rho_0^{-\gamma}dx \notag \\
& =-\int \partial_{k}u_t^i\partial_i b^j\partial_{k} b^j\rho_0^{-\gamma}dx
+\frac12\int \divv u_t|\nabla b|^2\rho_0^{-\gamma}dx
-\int u_t^i\partial_i b^j\partial_{k} b^j\partial_{k}\rho_0^{-\gamma}dx \notag \\
& \quad -\frac12\int u_t|\nabla b|^2\cdot\nabla\rho_0^{-\gamma}dx \notag \\
& \leq C\int |\nabla u_t||\nabla b|^2\rho_0^{-\gamma}dx+C\int |u_t||\nabla b|^2|\nabla\rho_0^{-\gamma}|dx
\end{align}
due to
\begin{align*}
-\int u_t^i\partial_{ik} b^j\partial_{k} b^j\rho_0^{-\gamma}dx
& =\int \divv u_t|\nabla b|^2\rho_0^{-\gamma}dx
+\int u_t^i\partial_{ik} b^j\partial_{k} b^j\rho_0^{-\gamma}dx
-\int u_t|\nabla b|^2\cdot\nabla\rho_0^{-\gamma}dx,
\end{align*}
which combined with $\eqref{a1}_4$ and \eqref{h1} leads to
\begin{align*}
\bar{K}_3&=\int u_t\cdot\nabla b\cdot(b\cdot\nabla u-b\divv u-u\cdot\nabla b+\nu\Delta b)\rho_0^{-\gamma}dx\nonumber\\
&\le C\int |u_t||\nabla b||b||\nabla u|\rho_0^{-\gamma}dx
+C\int|u_t||u||\nabla b|^2\rho_0^{-\gamma}dx\nonumber\\
&\quad+C\int |\nabla u_t||\nabla b|^2\rho_0^{-\gamma}dx
+C\int |u_t||\nabla b|^2|\nabla\rho_0^{-\gamma}|dx\nonumber\\
&\le C\|\nabla u\|_{L^\infty}\|\rho_0^{1-\frac{\gamma}{2}}u_t\|_{L^2}
\|\rho_0^{-\frac{\gamma+2}{2}}|b||\nabla b|\|_{L^2}
+C\|\nabla b\|_{L^\infty}^2\|\rho_0^{1-\frac{\gamma}{2}}u_t\|_{L^2}
\|\rho_0^{-1-\frac{\gamma}{2}}u\|_{L^2}\nonumber\\
&\quad
+C\|\nabla b\|_{L^\infty}\|\rho_0^{-\frac{\gamma}{2}}\nabla u_t\|_{L^2}\|\rho_0^{-\frac{\gamma}{2}}\nabla b\|_{L^2}
+C\|\nabla b\|_{L^\infty}\|\rho_0^{1-\frac{\gamma}{2}}u_t\|_{L^2}\|\rho_0^{-\frac{\gamma+1}{2}}\nabla b\|_{L^2}\nonumber\\
&\le C\phi(t)\Big(\|\rho_0^{1-\frac{\gamma}{2}}u_t\|_{L^2}^2
+\|\rho_0^{-1-\frac{\gamma}{2}}u\|_{L^2}^2
+\|\rho_0^{-\frac{\gamma}{2}}\nabla b\|_{L^2}^2\Big)
+C\|\rho_0^{-\frac{\gamma+2}{2}}|b||\nabla b|\|_{L^2}^2
+C\|\rho_0^{-\frac{\gamma+1}{2}}\nabla b\|_{L^2}.
\end{align*}
Integration by parts gives that
\begin{align*}
\bar{K}_6
=\int \divv u|b_t|^2\rho_0^{-\gamma}dx-\bar{K}_6
+\int (u\cdot b_t) (b_t\cdot\nabla \rho_0^{-\gamma})dx,
\end{align*}
which along with \eqref{h1} implies that
\begin{align*}
\bar{K}_6
&\le C\big(\|\nabla u\|_{L^\infty}+\|\sqrt{\rho_0}u\|_{L^\infty}\big)
\|\rho_0^{-\frac{\gamma}{2}}b_t\|_{L^2}^2
\le C\phi(t)
\|\rho_0^{-\frac{\gamma}{2}}b_t\|_{L^2}^2.
\end{align*}
Substituting the above estimates on $\bar{K}_i\ (i=1, 2,\cdots, 6)$ into \eqref{2.60} leads to
\begin{align*}
&\frac{d}{dt}\|\rho_0^{-\frac{\gamma}{2}}b_t\|_{L^2}^2+\nu\|\rho_0^{-\frac{\gamma}{2}}\nabla b_t\|_{L^2}^2\nonumber\\
&\le C\phi(t)\Big(\|\rho_0^{1-\frac{\gamma}{2}}u_t\|_{L^2}^2
+\|\rho_0^{-\frac{\gamma+1}{2}}\nabla b\|_{L^2}
+\|\rho_0^{-\frac{\gamma+2}{2}}|b||\nabla b|\|_{L^2}^2+\||b|^2\rho_0^{-\frac{1+\gamma}{2}}\|_{L^2}^2
+\|\rho_0^{-\frac{\gamma}{2}}b_t\|_{L^2}^2\Big)\nonumber\\
&\quad+\frac{\mu}{2}\|\rho_0^\frac{1-\gamma}{2}\nabla u_t\|_{L^2}^2+C\|\rho_0^{-\frac{\gamma}{2}}\nabla b\|_{L^2}^2
+C\|\rho_0^{-\frac{\gamma}{2}}\nabla^2b\|_{L^2}^2
+C\||b||\Delta b|\rho_0^{-\frac{1+\gamma}{2}}\|_{L^2}^2.
\end{align*}
This together with \eqref{2.58} yields that
\begin{align*}
&\frac{d}{dt}\Big(\|\sqrt{\rho}\rho_0^\frac{1-\gamma}{2}u_t\|_{L^2}^2
+\|\rho_0^{-\frac{\gamma}{2}}b_t\|_{L^2}^2\Big)+\|\rho_0^\frac{1-\gamma}{2}\nabla u_t\|_{L^2}^2
+\|\rho_0^{-\frac{\gamma}{2}}\nabla b_t\|_{L^2}^2\nonumber\\
&\le C\phi(t)\Big(\|\sqrt{\rho}\rho_0^\frac{1-\gamma}{2}u_t\|_{L^2}^2
+\|\rho_0^{-\frac{\gamma}{2}}b_t\|_{L^2}^2\Big)
+C\|\rho_0^{-\frac{\gamma+2}{2}}\nabla^2b\|_{L^2}^2
+C\||b||\Delta b|\rho_0^{-\frac{\gamma+2}{2}}\|_{L^2}^2+C\|\rho_0^\frac{3-\gamma}{2}\theta_t\|_{L^2}^2
\notag \\ & \quad +C\phi(t)\Big(\|\rho_0^{1-\frac{\gamma}{2}}u\|_{L^2}^2
+\|\rho_0^\frac{1-\gamma}{2}\nabla u\|_{L^2}^2+\|\rho_0^{1-\frac{\gamma}{2}}\nabla\theta\|_{L^2}^2
+\|\rho_0^{1-\frac{\gamma}{2}}\theta\|_{L^2}^2\Big)\nonumber\\
&\quad+C\phi(t)\Big(\|\rho_0^{-\frac{\gamma+2}{2}}\nabla b\|_{L^2}
+\|\rho_0^{-\frac{\gamma+2}{2}}|b||\nabla b|\|_{L^2}^2+\||b|^2\rho_0^{-\frac{2+\gamma}{2}}\|_{L^2}^2\Big),
\end{align*}
which combined with Gronwall's inequality and Lemmas \ref{l21}--\ref{l25} implies \eqref{z2.50}.
\end{proof}

\begin{lemma}\label{l27}
There exists a positive constant $C$ depending only on $\mu$, $\lambda$, $\widetilde{R}$, $c_v$, $\kappa$, $\nu$, $K_1$, $K_2$, and $\Phi_T$ such that
\begin{align}
&\|\nabla(\rho_0^{-\frac{\gamma}{2}}u)\|_{L^6}+\|\rho_0^{-\frac{\gamma}{2}}\nabla u\|_{L^6}
+\|\nabla(\rho_0^{-\frac{\gamma}{2}}b)\|_{L^6}+\|\rho_0^{-\frac{\gamma}{2}}\nabla b\|_{L^6}\nonumber\\
&\le C\Big\|\Big(\rho_0^\frac{1-\gamma}{2}u_0,
\rho_0^{1-\frac{\gamma}{2}}\theta_0,
\rho_0^{-\frac{\gamma}{2}}b_0, \rho_0^{-\frac{\gamma+2}{2}}|b_0|^2, \rho_0^{-\frac{\gamma}{2}}\nabla u_0,
\rho_0^{-\frac{\gamma+2}{2}}\nabla b_0, \rho_0^{-\frac{\gamma}{2}}\nabla\theta_0, \rho_0^{-\frac{\gamma}{2}}\wp_0\Big)\Big\|_{L^2}^2,
\end{align}
where $\wp_0$ is defined as in Lemma \ref{l25}.
\end{lemma}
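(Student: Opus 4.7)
The plan is to reduce the four $L^6$-quantities to weighted $L^2$-bounds on one additional derivative of $u$ and $b$, and then invoke the elliptic/parabolic structure of $\eqref{a1}_2$ and $\eqref{a1}_4$ together with all \emph{a priori} bounds produced in Lemmas \ref{l22}--\ref{l26}. The starting point is the Sobolev embedding $\|g\|_{L^6(\mathbb{R}^3)}\le C\|\nabla g\|_{L^2(\mathbb{R}^3)}$. Applying it with $g=\rho_0^{-\gamma/2}\nabla f$ and using \eqref{h1} (which gives $|\nabla\rho_0^{-\gamma/2}|\le C\rho_0^{(1-\gamma)/2}$) and \eqref{h2} (which gives $|\nabla^2\rho_0^{-\gamma/2}|\le C\rho_0^{1-\gamma/2}$) leads to
\begin{align*}
\|\rho_0^{-\gamma/2}\nabla f\|_{L^6}&\le C\|\rho_0^{-\gamma/2}\nabla^2 f\|_{L^2}+C\|\rho_0^{(1-\gamma)/2}\nabla f\|_{L^2},\\
\|\nabla(\rho_0^{-\gamma/2}f)\|_{L^6}&\le C\|\rho_0^{-\gamma/2}\nabla f\|_{L^6}+C\|\rho_0^{(1-\gamma)/2}f\|_{L^6},
\end{align*}
for $f\in\{u,b\}$. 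The last summands are already controlled by Lemmas \ref{l22}--\ref{l23}, so everything boils down to bounding $\|\rho_0^{-\gamma/2}\nabla^2 u\|_{L^2}$ and $\|\rho_0^{-\gamma/2}\nabla^2 b\|_{L^2}$.

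For the velocity I would recast $\eqref{a1}_2$ as the Lam\'e system
\begin{align*}
-\mu\Delta u-(\mu+\lambda)\nabla\divv u = -\rho\dot u-\nabla p+b\cdot\nabla b-\tfrac12\nabla|b|^2
\end{align*}
and apply a weighted $L^2$ elliptic estimate (for instance, multiply the identity by $\rho_0^{-\gamma}\Delta u$, integrate by parts, and use \eqref{h1}--\eqref{h2} to dispose of the commutators). The outcome is
\begin{align*}
\|\rho_0^{-\gamma/2}\nabla^2 u\|_{L^2}\le C\|\rho_0^{1-\gamma/2}\dot u\|_{L^2}+C\|\rho_0^{-\gamma/2}\nabla p\|_{L^2}+C\|\rho_0^{-\gamma/2}|b||\nabla b|\|_{L^2}+C\|\rho_0^{(1-\gamma)/2}\nabla u\|_{L^2}.
\end{align*}
The $\dot u$-term is controlled by Lemma \ref{l25}; the pressure term, after using $p=\widetilde R\rho\theta$ and \eqref{h1}, is dominated by $\|\rho_0^{1-\gamma/2}\nabla\theta\|_{L^2}+\|\rho_0^{3/2-\gamma/2}\theta\|_{L^2}$ and handled by Lemmas \ref{l22} and \ref{l24}; the magnetic product is supplied by Lemma \ref{l25}; the last term is covered by Lemma \ref{l23}.

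For the magnetic field, I apply the same idea to
\begin{align*}
\nu\Delta b = b_t-b\cdot\nabla u+u\cdot\nabla b+b\divv u
\end{align*}
and obtain
\begin{align*}
\|\rho_0^{-\gamma/2}\nabla^2 b\|_{L^2}\le C\|\rho_0^{-\gamma/2}b_t\|_{L^2}+C\|b\|_{L^\infty}\|\rho_0^{-\gamma/2}\nabla u\|_{L^2}+C\|\sqrt{\rho_0}u\|_{L^\infty}\|\rho_0^{-(\gamma+1)/2}\nabla b\|_{L^2}+\ldots
\end{align*}
Here the first term is precisely what Lemma \ref{l26} provides (which is why that lemma was proved immediately before this one), and the remaining factors are absorbed into the running quantity $\phi(t)$ together with Lemmas \ref{l22} and \ref{l23}. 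Combining these bounds with the two Sobolev reductions above closes the estimate.

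The main obstacle is the careful bookkeeping of the commutators generated when pulling $\rho_0^{-\gamma/2}$ across the second-order operators: the first-order remainder $\nabla\rho_0^{-\gamma/2}\cdot\nabla f$ requires \eqref{h1}, while the second-order remainder $\nabla^2\rho_0^{-\gamma/2}\cdot f$ requires \eqref{h2}. Compared with \cite{LX23}, the genuinely new difficulty is the coupling term $b\cdot\nabla b-\frac12\nabla|b|^2$, which forces one to rely on the singularly weighted bounds of $|b||\nabla b|$ and $|b||\Delta b|$ from Lemma \ref{l25}, and closing the $b$-estimate in turn needs the $L^\infty_tL^2_x$-bound of $\rho_0^{-\gamma/2}b_t$ supplied by Lemma \ref{l26}.
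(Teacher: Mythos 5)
Your overall strategy (reduce the four $L^6$ quantities to elliptic estimates for the $u$- and $b$-equations and plug in Lemmas \ref{l22}--\ref{l26}) is in the right spirit, and in particular you correctly identify that Lemma \ref{l26} is needed precisely to supply $\|\rho_0^{-\gamma/2}b_t\|_{L^2}$. But your specific route for the velocity has a genuine gap in the pressure term. You reduce $\|\rho_0^{-\gamma/2}\nabla u\|_{L^6}$ to $\|\rho_0^{-\gamma/2}\nabla^2 u\|_{L^2}$ by Sobolev, then claim
\begin{align*}
\|\rho_0^{-\gamma/2}\nabla^2 u\|_{L^2}\le \cdots+C\|\rho_0^{-\gamma/2}\nabla p\|_{L^2}+\cdots,
\qquad
\|\rho_0^{-\gamma/2}\nabla p\|_{L^2}\lesssim\|\rho_0^{1-\gamma/2}\nabla\theta\|_{L^2}+\|\rho_0^{\frac{3-\gamma}{2}}\theta\|_{L^2}.
\end{align*}
The second inequality requires a pointwise bound $|\nabla\rho|\lesssim\rho_0^{3/2}$, since $\nabla p=\widetilde R(\theta\nabla\rho+\rho\nabla\theta)$. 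However, the assumptions \eqref{h1} and \eqref{h2} constrain $\nabla\rho_0$ and $\nabla^2\rho_0$ only; Lemma \ref{l21} gives a two-sided pointwise bound on $\rho$ but says nothing about $\nabla\rho$, and the transport equation propagates $\nabla\log\rho$ with a source $\nabla\divv u$ that is not pointwise controllable. Hence the claimed pressure bound is not available from the hypotheses, and the weighted $\|\nabla^2 u\|_{L^2}$ estimate you propose does not close. Trying to dodge $\nabla p$ by integrating by parts against $\rho_0^{-\gamma}\Delta u$ only pushes the third derivative $\Delta\divv u$ onto $u$, which is equally out of reach at this stage.

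The paper avoids this issue by not going through $\nabla^2 u$ at all. Instead one writes a Lam\'e system directly for the weighted unknown $\rho_0^{-\gamma/2}u$ (equation \eqref{2.64}), in which the pressure appears only in divergence form, $\divv\bigl(\rho_0^{-\gamma/2}pI\bigr)$, plus a first-order commutator $p\nabla\rho_0^{-\gamma/2}$. Applying the $W^{1,6}$ elliptic estimate for $\mathcal Lg=f+\divv h$, namely $\|\nabla g\|_{L^6}\le C\|f\|_{L^2}+C\|h\|_{L^6}$, one only needs $\|\rho_0^{-\gamma/2}p\|_{L^6}\le C\|\rho_0^{1-\gamma/2}\theta\|_{L^6}\le C\bigl(\|\rho_0^{1-\gamma/2}\nabla\theta\|_{L^2}+\|\rho_0^{\frac{3-\gamma}{2}}\theta\|_{L^2}\bigr)$ and $\|p\nabla\rho_0^{-\gamma/2}\|_{L^2}\le C\|\rho_0^{\frac{3-\gamma}{2}}\theta\|_{L^2}$; no $\nabla\rho$ ever appears. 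The same device is applied to the $b$-equation (equation \eqref{2.65}). Your argument should be restructured around the $L^6$-elliptic estimate for the weighted unknown rather than a Sobolev reduction to weighted second derivatives, precisely because the latter forces one to differentiate $p$.
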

\begin{proof}[Proof]
It follows from $\eqref{a1}_2$ and $\eqref{a1}_4$ that
\begin{align}\label{2.64}
&\mu\Delta\Big(\rho_0^{-\frac{\gamma}{2}}u\Big)
+(\mu+\lambda)\nabla\divv\Big(\rho_0^{-\frac{\gamma}{2}}u\Big)\nonumber\\
&=\mu\rho_0^{-\frac{\gamma}{2}}\Delta u+\mu\divv\Big(u\otimes\nabla\rho_0^{-\frac{\gamma}{2}}\Big)
+\mu\nabla u\cdot\nabla\rho_0^{-\frac{\gamma}{2}}
+(\mu+\lambda)\rho_0^{-\frac{\gamma}{2}}\nabla\divv u\nonumber\\
&\quad+(\mu+\lambda)\nabla\Big(u\cdot\nabla \rho_0^{-\frac{\gamma}{2}}\Big)
+(\mu+\lambda)\divv\nabla\rho_0^{-\frac{\gamma}{2}}\nonumber\\
&=\rho_0^{-\frac{\gamma}{2}}\rho\dot{u}-\rho_0^{-\frac{\gamma}{2}}\curl b\times b
+\divv\Big(\mu u\otimes\nabla\rho_0^{-\frac{\gamma}{2}}+(\mu+\lambda)u\cdot\nabla\rho_0^{-\frac{\gamma}{2}}I
+\rho_0^{-\frac{\gamma}{2}}pI\Big)\nonumber\\
&\quad+(\mu\nabla u+(\mu+\lambda)\divv uI-pI)\nabla\rho_0^{-\frac{\gamma}{2}},
\end{align}
and
\begin{align}\label{2.65}
\nu\Delta\Big(\rho_0^{-\frac{\gamma}{2}}b\Big)
&=\nu\rho_0^{-\frac{\gamma}{2}}\Delta b+\nu\divv\Big(b\otimes\nabla\rho_0^{-\frac{\gamma}{2}}\Big)
+\nu\nabla b\cdot\nabla\rho_0^{-\frac{\gamma}{2}}\nonumber\\
&=\rho_0^{-\frac{\gamma}{2}}(b_t-b\cdot\nabla u+u\cdot\nabla b+b\divv u)
+\nu\nabla b\cdot\nabla\rho_0^{-\frac{\gamma}{2}}
+\nu\divv\Big(b\otimes\nabla\rho_0^{-\frac{\gamma}{2}}\Big).
\end{align}
Applying the standard $L^2$-estimate to systems \eqref{2.64} and \eqref{2.65}, respectively, then we derive from Lemma \ref{l21}, \eqref{h1}, Sobolev's inequality, and Young's inequality that
\begin{align}\label{2.66}
\Big\|\nabla\Big(\rho_0^{-\frac{\gamma}{2}}u\Big)\Big\|_{L^6}
&\le C\Big\|\rho_0^{-\frac{\gamma}{2}}|b||\nabla b|\Big\|_{L^2}
+C\Big\|(\mu\nabla u+(\mu+\lambda)\divv uI-pI)\nabla\rho_0^{-\frac{\gamma}{2}}\Big\|_{L^2}\nonumber\\
&\quad+C\|\rho_0^{-\frac{\gamma}{2}}\rho\dot{u}\|_{L^2}
+C\Big\|\mu u\otimes\nabla\rho_0^{-\frac{\gamma}{2}}+(\mu+\lambda)u\cdot\nabla\rho_0^{-\frac{\gamma}{2}}I
+\rho_0^{-\frac{\gamma}{2}}pI\Big\|_{L^6}\nonumber\\
&\le C\Big\|\rho_0^{1-\frac{\gamma}{2}}\dot{u}\Big\|_{L^2}
+C\Big\|\rho_0^{-\frac{\gamma}{2}}|b||\nabla b|\Big\|_{L^2}
+C\Big\|\rho_0^\frac{1-\gamma}{2}\nabla u\Big\|_{L^2}
+C\Big\|\rho_0^\frac{3-\gamma}{2}\theta\Big\|_{L^2}\nonumber\\
&\quad+C\Big\|\rho_0^\frac{1-\gamma}{2}u\Big\|_{L^6}
+C\Big\|\rho_0^{1-\frac{\gamma}{2}}\theta\Big\|_{L^6}\nonumber\\
&\le C\Big\|\rho_0^{1-\frac{\gamma}{2}}\dot{u}\Big\|_{L^2}
+C\Big\|\rho_0^{-\frac{\gamma}{2}}|b||\nabla b|\Big\|_{L^2}
+C\Big\|\rho_0^\frac{1-\gamma}{2}\nabla u\Big\|_{L^2}
+C\Big\|\rho_0^\frac{3-\gamma}{2}\theta\Big\|_{L^2}\nonumber\\
&\quad+C\Big\|\rho_0^{1-\frac{\gamma}{2}}u\Big\|_{L^2}
+C\Big\|\rho_0^{1-\frac{\gamma}{2}}\nabla\theta\Big\|_{L^2},
\end{align}
which along with Sobolev's inequality yields that
\begin{align}\label{2.67}
\|\rho_0^{-\frac{\gamma}{2}}\nabla u\|_{L^6}
&=\|\nabla(\rho_0^{-\frac{\gamma}{2}}u)-\nabla\rho_0^{-\frac{\gamma}{2}}u\|_{L^6}
\le C\|\nabla(\rho_0^{-\frac{\gamma}{2}}u)\|_{L^6}
+C\|\nabla\rho_0^{-\frac{\gamma}{2}}u\|_{L^6}\nonumber\\
&\le C\|\nabla(\rho_0^{-\frac{\gamma}{2}}u)\|_{L^6}
+C\|\rho_0^{1-\frac{\gamma}{2}}u\|_{L^2}+C\|\rho_0^\frac{1-\gamma}{2}\nabla u\|_{L^2}\nonumber\\
&\le C\Big\|\Big(\rho_0^{1-\frac{\gamma}{2}}\dot{u}, \rho_0^{-\frac{\gamma}{2}}|b||\nabla b|,
\rho_0^\frac{1-\gamma}{2}\nabla u, \rho_0^\frac{3-\gamma}{2}\theta, \rho_0^{1-\frac{\gamma}{2}}u,
\rho_0^{1-\frac{\gamma}{2}}\nabla\theta\Big)\Big\|_{L^2}.
\end{align}
Similarly, by Gagliardo-Nirenberg inequality, \eqref{h1}, and \eqref{2.67}, it follows from the estimates of elliptic equations that
\begin{align}\label{2.68}
\|\nabla (\rho_0^{-\frac{\gamma}{2}}b)\|_{L^6}
&\le C\Big\|\rho_0^{-\frac{\gamma}{2}}(b_t-b\cdot\nabla u+u\cdot\nabla b+b\divv u)\Big\|_{L^2}
+C\|\rho_0^\frac{1-\gamma}{2}\nabla b\|_{L^2}+C\|\rho_0^\frac{1-\gamma}{2}b\|_{L^6}\nonumber\\
&\le C\|\rho_0^{-\frac{\gamma}{2}}b_t\|_{L^2}+C\|b\|_{L^4}\|\rho_0^{-\frac{\gamma}{2}}\nabla u\|_{L^4}
+C\|\rho_0^{-\frac{\gamma}{2}}u\|_{L^6}\|u\|_{L^6}\|\rho_0^{-\frac{\gamma}{2}}\nabla b\|_{L^2}\|\nabla b\|_{L^6}\nonumber\\
&\quad+C\|\rho_0^\frac{1-\gamma}{2}\nabla b\|_{L^2}
+C\|\nabla\rho_0^\frac{1-\gamma}{2}b\|_{L^2}\nonumber\\
&\le C\|\rho_0^{-\frac{\gamma}{2}}b_t\|_{L^2}+C\|b\|_{L^2}^\frac14\|\nabla b\|_{L^2}^\frac34\Big(\|\rho_0^{-\frac{\gamma}{2}}\nabla u\|_{L^2}
+\|\rho_0^{-\frac{\gamma}{2}}\nabla u\|_{L^6}\Big)\nonumber\\
&\quad+C\|\rho_0^\frac{1-\gamma}{2}\nabla b\|_{L^2}
+C\Big(\|\rho_0^\frac{1-\gamma}{2}u\|_{L^2}+\|\rho_0^{-\frac{\gamma}{2}}\nabla u\|_{L^2}\Big)\|\nabla u\|_{L^2}
\|\nabla^2b\|_{L^2}\|\rho_0^{-\frac{\gamma}{2}}\nabla b\|_{L^2}\nonumber\\
&\le C\Big\|\Big(\rho_0^{-\frac{\gamma}{2}}b_t, \rho_0^{-\frac{\gamma}{2}}\nabla u,
\rho_0^\frac{1-\gamma}{2}\nabla b\Big)\Big\|_{L^2}
+C\Big\|\Big(\rho_0^\frac{1-\gamma}{2}u, \rho_0^{-\frac{\gamma}{2}}\nabla u, \rho_0^{-\frac{\gamma}{2}}\nabla b\Big)\Big\|_{L^2}^2
+C\|\rho_0^{-\frac{\gamma}{2}}\nabla u\|_{L^6}\nonumber\\
&\le C\Big\|\Big(\rho_0^{-\frac{\gamma}{2}}b_t, \rho_0^{-\frac{\gamma}{2}}\nabla u,
\rho_0^\frac{1-\gamma}{2}\nabla b, \rho_0^{1-\frac{\gamma}{2}}\dot{u}, \rho_0^{-\frac{\gamma+2}{2}}|b||\nabla b|,
\rho_0^\frac{3-\gamma}{2}\theta, \rho_0^{1-\frac{\gamma}{2}}u,
\rho_0^{1-\frac{\gamma}{2}}\nabla\theta\Big)\Big\|_{L^2}\nonumber\\
&\quad+C\Big\|\Big(\rho_0^\frac{1-\gamma}{2}u, \rho_0^{-\frac{\gamma}{2}}\nabla u, \rho_0^{-\frac{\gamma+2}{2}}\nabla b\Big)\Big\|_{L^2}^2,
\end{align}
and
\begin{align}\label{2.69}
\|\rho_0^{-\frac{\gamma}{2}}\nabla b\|_{L^6}&=\|\nabla(\rho_0^{-\frac{\gamma}{2}}b)-\nabla\rho_0^{-\frac{\gamma}{2}}b\|_{L^6}
\le C\|\nabla(\rho_0^{-\frac{\gamma}{2}}b)\|_{L^6}
+C\|\rho_0^\frac{1-\gamma}{2}b\|_{L^6}\nonumber\\
&\le C\Big\|\Big(\rho_0^{-\frac{\gamma}{2}}b_t, \rho_0^{-\frac{\gamma}{2}}\nabla u,
\rho_0^\frac{1-\gamma}{2}\nabla b, \rho_0^{1-\frac{\gamma}{2}}\dot{u}, \rho_0^{-\frac{\gamma+2}{2}}|b||\nabla b|,
\rho_0^\frac{3-\gamma}{2}\theta, \rho_0^{1-\frac{\gamma}{2}}u,
\rho_0^{1-\frac{\gamma}{2}}\nabla\theta\Big)\Big\|_{L^2}\nonumber\\
&\quad+C\Big\|\Big(\rho_0^\frac{1-\gamma}{2}u, \rho_0^{-\frac{\gamma}{2}}\nabla u, \rho_0^{-\frac{\gamma+2}{2}}\nabla b\Big)\Big\|_{L^2}^2.
\end{align}
Combining \eqref{2.66}--\eqref{2.69} altogether and applying Lemmas \ref{l22}--\ref{l26}, the conclusion follows.
\end{proof}

\section{The De Giorgi iterations}\label{sec3}
This section is devoted to carrying out suitable De Giorgi iterations, which are
preparations for proving the lower and upper bounds of the entropy in the next
section. Moreover, the iterations are applied to different equations in establishing
the lower and upper bounds of the entropy: in dealing with the lower bound, a De
Giorgi iteration is applied to the entropy equation itself, while in dealing with the
upper bound, it is applied to the temperature equation.

Set
\begin{align}\label{3.1}
M_T\triangleq\frac{\kappa(\gamma-1)}{c_v}e^{C_*\Phi_T}\big[(1+|\gamma-2|)K_1^2+K_2\big],
\end{align}
where $C_*$ is the positive constant stated in Lemma \ref{l21} and $\Phi_T$ is given by \eqref{2.1}.

The following De Giorgi type iteration will be used to get the uniform lower bound
of the entropy.
\begin{lemma}\label{lem31}
Let $M_T$ be given by \eqref{3.1} and define
\begin{align*}
\tilde{s}=\log\theta-(\gamma-1)\log\rho_0+M_Tt, \quad \underline{\tilde{s}}_0=\frac{\underline{s}_0}{c_v}-\log\frac{\widetilde{R}}{A}.
\end{align*}
Then the following statements hold.

(i) For any $\ell\le \underline{\tilde{s}}_0$, there exists a positive constant $C$ depending only on $\mu$, $\lambda$, $\widetilde{R}$, $c_v$, $\kappa$, $\nu$, $K_1$, $K_2$, $\Phi_T$, and the initial data such that
\begin{align*}
\sup_{0\le t\le T}\|(\tilde{s}-\ell)_-\|_{L^2}^2
+\int_0^T\Big\|\frac{\nabla(\tilde{s}-\ell)_-}{\sqrt{\rho_0}}\Big\|_{L^2}^2dt\le C.
\end{align*}

(ii) Set
\begin{align*}
\mathcal Y_\ell=\mathcal Y_\ell(T)\triangleq\sup_{0\le t\le T}\|(\tilde{s}-\ell)_-\|_{L^2}^2+\int_0^T\|\nabla(\tilde{s}-\ell)_-\|_{L^2}^2dt,
\quad \forall \ell\le \underline{\tilde{s}}_0,
\end{align*}
where $f_-\triangleq-\min\{f, 0\}$. Then there exists a positive constant $C$ depending only on $c_v$, $\mu$, $\lambda$, $\kappa$, $\nu$, $K_1$, $K_2$, and $\Phi_T$ such that
\begin{align*}
\mathcal Y_\ell\le \frac{C}{(m-\ell)^3}\mathcal Y_m^\frac32, \quad \forall \ell<m\le \underline{\tilde{s}}_0.
\end{align*}
\end{lemma}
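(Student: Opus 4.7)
The plan is to derive a parabolic equation for $\tilde s$ and then test it against appropriate truncations. Starting from $\eqref{a1}_3$, I divide by $\theta$, use $\Delta\theta/\theta=\Delta\log\theta+|\nabla\log\theta|^2$, and rewrite $\widetilde{R}\rho\,\divv u=-c_v(\gamma-1)(\rho_t+u\cdot\nabla\rho)$ via the continuity equation. After substituting $\tilde s=\log\theta-(\gamma-1)\log\rho_0+M_Tt$ and using $\Delta\log\theta=\Delta\tilde s+(\gamma-1)\Delta\log\rho_0$, one obtains
\begin{align*}
c_v\rho(\tilde s_t+u\cdot\nabla\tilde s)-\kappa\Delta\tilde s
&=c_v\rho M_T+\kappa(\gamma-1)\Delta\log\rho_0-\widetilde{R}\rho\,\divv u-c_v(\gamma-1)\rho\,u\cdot\nabla\log\rho_0\\
&\quad+\kappa|\nabla\log\theta|^2+\theta^{-1}\bigl(\mathcal{Q}(\nabla u)+\nu|\curl b|^2\bigr),
\end{align*}
in which the last two right-hand side terms are manifestly nonnegative. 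By \eqref{h1}--\eqref{h2} one has $|\nabla\log\rho_0|\le K_1\sqrt{\rho_0}$ and $|\Delta\log\rho_0|\le\bigl[(1+|\gamma-2|)K_1^2+K_2\bigr]\rho_0$, while Lemma \ref{l21} gives $\rho\ge\rho_0 e^{-C_*\Phi_T}$. The precise choice of $M_T$ in \eqref{3.1} is tailored so that $c_v\rho M_T$ pointwise dominates $\kappa(\gamma-1)|\Delta\log\rho_0|$ (modulo absorption with the cross term of $|\nabla\log\theta|^2$), which is the mechanism by which the bad $\log\rho_0$-contribution is neutralized. Finally, since $\rho|_{t=0}=\rho_0$, we have $\tilde s|_{t=0}\ge\underline{\tilde{s}}_0\ge\ell$, so $(\tilde s-\ell)_-|_{t=0}\equiv0$.

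For part (i), I would divide the displayed equation by $\rho$ and test with $-(\tilde s-\ell)_-$. The transport part produces $\tfrac12\tfrac{d}{dt}\|(\tilde s-\ell)_-\|_{L^2}^2$ up to a commutator controlled by $\|\divv u\|_{L^\infty}\|(\tilde s-\ell)_-\|_{L^2}^2$, while the parabolic part, after integration by parts and using $\nabla\tilde s\cdot\nabla(\tilde s-\ell)_-=-|\nabla(\tilde s-\ell)_-|^2$ on the support, produces the positive contribution $\tfrac{\kappa}{c_v}\int\rho^{-1}|\nabla(\tilde s-\ell)_-|^2\,dx$, together with a commutator involving $\nabla\rho^{-1}$ which is absorbed using Lemma \ref{l21} and the fact that $\nabla\rho$ inherits a bound $|\nabla\rho|\lesssim\rho^{3/2}$ analogous to \eqref{h1}. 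The $M_T$-term kills the $\Delta\log\rho_0$ contribution as described, and the remaining $\divv u$ and $u\cdot\nabla\log\rho_0$ contributions are handled by Cauchy--Schwarz using the integrability of $\phi(t)$ on $(0,T)$ and the a priori bounds of Section \ref{sec2}. Gronwall's inequality combined with Lemma \ref{l21} (to convert $\rho^{-1}$-weighted gradients into $\rho_0^{-1}$-weighted ones) then yields the claim.

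For part (ii), I would test the undivided equation with $-(\tilde s-\ell)_-$, so that the continuity equation produces
\begin{align*}
\frac{c_v}{2}\frac{d}{dt}\int\rho(\tilde s-\ell)_-^2\,dx+\kappa\int|\nabla(\tilde s-\ell)_-|^2\,dx\le\int g\cdot(\tilde s-\ell)_-\,dx,
\end{align*}
where $g$ bundles the absolute values of the source terms on the right-hand side. The crucial observation is that for $\ell<m\le\underline{\tilde{s}}_0$ the integrand on the right is supported in $\{\tilde s<\ell\}\subset\{\tilde s<m\}$, on which $(\tilde s-m)_->m-\ell$, hence $\mathbf{1}_{\{\tilde s<\ell\}}\le(\tilde s-m)_-/(m-\ell)$. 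Combining this inclusion with H\"older's inequality, interpolation between $L^2$ and $L^6$, and the 3D Sobolev embedding $\|(\tilde s-m)_-\|_{L^6}\lesssim\|\nabla(\tilde s-m)_-\|_{L^2}$, each contribution of $g$ is bounded by a coefficient norm (integrable in time by Section \ref{sec2}) times a power of $\mathcal{Y}_m$ times a negative power of $m-\ell$. Choosing the H\"older exponents so that the total exponent on $\mathcal{Y}_m$ equals $3/2$ and that on $(m-\ell)$ equals $-3$, as dictated by the 3D Sobolev exponent together with the $L^\infty_tL^2_x\cap L^2_tH^1_x$ structure encoded in $\mathcal{Y}_m$, gives the stated iteration inequality; the $\rho$-weighted $L^\infty_tL^2_x$ piece on the left is converted to the unweighted form of $\mathcal{Y}_\ell$ using Lemma \ref{l21} on the support of the truncation.

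The principal obstacle is the weight bookkeeping: the natural energy identity for this equation comes with a $\rho$-weighted $L^2$-norm, whereas Lemma \ref{lem31}(i) wants an \emph{unweighted} $L^2$-norm together with a $\rho_0^{-1/2}$-weighted gradient, and part (ii) wants both norms unweighted on the support of the truncation. Passing between these forms hinges on Lemma \ref{l21} and on the analogue of \eqref{h1} for $\rho$, and the commutator created by differentiating $\rho^{-1}$ in part (i) must be traded against the dissipation. A secondary but nontrivial point is the combinatorial verification that the H\"older exponents in part (ii) actually produce $3/2$ and $-3$; this is the standard 3D De Giorgi count but requires care in handling the worst source terms in $g$, notably $\kappa|\nabla\log\theta|^2$ and $\theta^{-1}\mathcal{Q}(\nabla u)$, through the a priori estimates of Section \ref{sec2}.
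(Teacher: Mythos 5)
Your derivation of the parabolic equation for $\tilde s$ agrees with the paper's \eqref{3.2} (your coefficient $c_v(\gamma-1)$ equals $\widetilde{R}$), and the observation that \eqref{3.1} is chosen so that $c_vM_T\rho+\kappa(\gamma-1)\Delta\log\rho_0\ge 0$ pointwise is the right mechanism; the cross term of $|\nabla\log\theta|^2$ is not needed for this, the pointwise sign alone suffices.

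The genuine gap is in part (ii). You test the undivided equation \eqref{3.2} with $-(\tilde s-\ell)_-$, obtaining
$\tfrac{c_v}{2}\tfrac{d}{dt}\int\rho(\tilde s-\ell)_-^2\,dx$ as the $L^\infty_t$ quantity, and you claim this is converted into the unweighted $\|(\tilde s-\ell)_-\|_{L^2}^2$ appearing in $\mathcal Y_\ell$ ``using Lemma~\ref{l21} on the support of the truncation.'' But Lemma~\ref{l21} only gives $\rho\ge e^{-C_*\Phi_T}\rho_0$, and $\rho_0$ has no positive lower bound on $\mathbb{R}^3$. The support $\{\tilde s<\ell\}=\{\theta<e^{\ell-M_Tt}\rho_0^{\gamma-1}\}$ does not exclude the far field where $\rho_0\to 0$, so nothing forces $\rho$ away from zero there, and $\int\rho(\tilde s-\ell)_-^2\,dx$ does \emph{not} control $\int(\tilde s-\ell)_-^2\,dx$. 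Without that control, the $L^\infty_tL^2_x$ half of $\mathcal Y_\ell$ is not recovered and the De~Giorgi iteration cannot close in terms of the stated $\mathcal Y_\ell$. The paper avoids this precisely by multiplying \eqref{3.2} by $-(\tilde s-\ell)_-/\rho_0$ (not $-(\tilde s-\ell)_-$): the time-derivative term then carries the weight $\rho/\rho_0\ge e^{-C_*\Phi_T}$, which is bounded below, and the dissipation is $\rho_0^{-1}$-weighted, which dominates the unweighted gradient since $\rho_0\in L^\infty$.

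A secondary point concerns part (i): dividing by $\rho$ and testing with $-(\tilde s-\ell)_-$ generates a commutator with $\nabla\rho^{-1}$, whose control requires the bound $|\nabla\rho|\lesssim\rho^{3/2}$ for the evolving density. You flag this as ``the analogue of \eqref{h1} for $\rho$,'' but it is not supplied by the hypotheses nor by any lemma in Section~\ref{sec2}. The paper's multiplier $-(\tilde s-\ell)_-/\rho_0$ sidesteps this: only $\nabla\rho_0^{-1}$ appears, which is handled by \eqref{h1} directly. Both issues share the same fix: use $-(\tilde s-\ell)_-/\rho_0$ as the test function in both parts, as in \eqref{3.2}--\eqref{3.3} of the paper, and then deduce the unweighted statements via Lemma~\ref{l21} together with $\rho_0\in L^\infty$.
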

\begin{proof}[Proof]
It follows from \eqref{1.3} and the definition of $\tilde{s}$ that
\begin{align}\label{3.2}
c_v\rho(\tilde{s}_t+u\cdot\nabla\tilde{s})-\kappa\Delta\tilde{s}&=
c_vM_T\rho+\kappa(\gamma-1)\Delta\log\rho_0-\widetilde{R}\rho u\cdot\nabla\log\rho_0\nonumber\\
&\quad-\widetilde{R}\rho\divv u+\kappa\Big|\frac{\nabla\theta}{\theta}\Big|^2+\frac{\mathcal Q(\nabla u)}{\theta}
+\nu\frac{|\curl b|^2}{\theta}.
\end{align}
Multiplying \eqref{3.2} by $-\frac{(\tilde{s}-\ell)_-}{\rho_0}$ and integrating the resultant over $\mathbb{R}^3$, we obtain that
\begin{align}
&\frac{c_v}{2}\frac{d}{dt}\Big\|\sqrt{\frac{\rho}{\rho_0}}(\tilde{s}-\ell)_-\Big\|_{L^2}^2
+\kappa\Big\|\frac{\nabla(\tilde{s}-\ell)_-}{\sqrt{\rho_0}}\Big\|_{L^2}^2\nonumber\\
&=-\frac{c_v}{2}\int\rho u\cdot\nabla\rho_0^{-1}(\tilde{s}-\ell)_-^2dx
-\kappa\int\nabla(\tilde{s}-\ell)_-\nabla\rho_0^{-1}(\tilde{s}-\ell)_-dx\nonumber\\
&\quad-\int[c_vM_T\rho+\kappa(\gamma-1)\Delta\log\rho_0]\rho_0^{-1}(\tilde{s}-\ell)_-dx\nonumber\\
&\quad+\widetilde{R}\int\rho u\cdot\nabla\log\rho_0\rho_0^{-1}(\tilde{s}-\ell)_-dx
+\widetilde{R}\int\rho\divv u\rho_0^{-1}(\tilde{s}-\ell)_-dx\nonumber\\
&\quad-\int\Big(\kappa\Big|\frac{\nabla\theta}{\theta}\Big|^2+\frac{\mathcal Q(\nabla u)}{\theta}
+\nu\frac{|\curl b|^2}{\theta}\Big)\rho_0^{-1}(\tilde{s}-\ell)_-dx\nonumber\\
&\le -\frac{c_v}{2}\int\rho u\cdot\nabla\rho_0^{-1}(\tilde{s}-\ell)_-^2dx
-\kappa\int\nabla(\tilde{s}-\ell)_-\nabla\rho_0^{-1}(\tilde{s}-\ell)_-dx\nonumber\\
&\quad-\int[c_vM_T\rho+\kappa(\gamma-1)\Delta\log\rho_0]\rho_0^{-1}(\tilde{s}-\ell)_-dx\nonumber\\
&\quad+\widetilde{R}\int\rho u\cdot\nabla\log\rho_0\rho_0^{-1}(\tilde{s}-\ell)_-dx
+\widetilde{R}\int\rho\divv u\rho_0^{-1}(\tilde{s}-\ell)_-dx.
\end{align}
Noticing that the term $\frac{|\curl b|^2}{\theta}$ doesn't work for estimating the uniform lower bound
of the entropy, thus we can adopt the arguments as those in \cite[Proposition 3.1]{LX23} to get the conclusions of Lemma \ref{lem31}. Here we omit the details for simplicity.
\end{proof}

Next, we need the following De Giorgi
type iteration to get the uniform upper bound of the entropy.
\begin{lemma}
Let $M_T$ be given by \eqref{3.1} and define
\begin{align*}
&\overline{S}_0=\frac{A}{\widetilde{R}}e^{\frac{\overline{s}_0}{c_v}}, \quad\theta_\ell=\theta-\ell e^{M_Tt}\rho_0^{\gamma-1},
\quad \forall\ell\in\mathbb{R},\\
&\mathcal Z_\ell=\mathcal Z_\ell(T)\triangleq\sup_{0\le t\le T}\|\rho_0^{1-\gamma}(\theta_\ell)_+\|_{L^2}^2
+\int_0^T\|\rho_0^{\frac12-\gamma}\nabla(\theta_\ell)_+\|_{L^2}^2dt, \quad \forall\ell\ge\overline{S}_0,
\end{align*}
where $f_+\triangleq\max\{f, 0\}$. Then there exists a positive constant $C$ depending only on
$\mu$, $\lambda$, $\widetilde{R}$, $c_v$, $\kappa$, $\nu$, $K_1$, $K_2$, $\Phi_T$, and the initial data such that
\begin{align*}
&\mathcal Z_\ell\le C(\ell^2+1), \quad \forall\ell\ge \overline{S}_0,\\
&\mathcal Z_\ell\le \frac{C\ell^2}{(\ell-m)^3}\mathcal Z_m^\frac32, \quad \ell>m\ge \overline{S}_0.
\end{align*}
\end{lemma}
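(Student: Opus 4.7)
The proof proceeds by a De Giorgi-type energy estimate applied directly to the temperature equation \eqref{a1}$_3$, parallel to the argument for the lower bound in Lemma \ref{lem31} but performed on the linearly shifted quantity $\theta_\ell = \theta - \ell e^{M_Tt}\rho_0^{\gamma-1}$ rather than on $\tilde s$. A direct computation combined with \eqref{a1}$_3$ shows that $\theta_\ell$ satisfies
\begin{align*}
c_v\rho(\partial_t\theta_\ell + u\cdot\nabla\theta_\ell) - \kappa\Delta\theta_\ell
&= \mathcal{Q}(\nabla u) + \nu|\curl b|^2 - \widetilde{R}\rho\theta\divv u \\
&\quad - c_v\ell e^{M_Tt}\rho\rho_0^{\gamma-1}\bigl[M_T + (\gamma-1)u\cdot\nabla\log\rho_0\bigr] + \kappa\ell e^{M_Tt}\Delta\rho_0^{\gamma-1}.
\end{align*}
Because $\theta_0 = \tfrac{A}{\widetilde R}e^{s_0/c_v}\rho_0^{\gamma-1}\le \overline{S}_0\rho_0^{\gamma-1}$, for any $\ell\ge \overline{S}_0$ one has $(\theta_\ell)_+|_{t=0}\equiv 0$, which is what allows Gronwall to start from zero data.

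For the first estimate, multiply the above equation by $\rho_0^{1-2\gamma}(\theta_\ell)_+$ and integrate over $\mathbb{R}^3$. Using \eqref{a1}$_1$, the material derivative piece yields $\tfrac{c_v}{2}\tfrac{d}{dt}\int\rho\rho_0^{1-2\gamma}(\theta_\ell)_+^2\,dx$ up to a transport-weight commutator, while integration by parts on the dissipative term produces $\kappa\int\rho_0^{1-2\gamma}|\nabla(\theta_\ell)_+|^2\,dx$ up to a similar commutator. Hypothesis \eqref{h1} bounds $|\nabla\rho_0^{1-2\gamma}|\le C\rho_0^{5/2-2\gamma}$ and \eqref{h1}--\eqref{h2} together yield the pointwise bound $|\Delta\rho_0^{\gamma-1}|\le C\rho_0^{\gamma}$. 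The precise choice of $M_T$ in \eqref{3.1}, combined with the lower bound $\rho\ge e^{-C_*\Phi_T}\rho_0$ from Lemma \ref{l21}, ensures the coercive contribution $c_vM_T\ell e^{M_Tt}\int\rho\rho_0^{-\gamma}(\theta_\ell)_+\,dx$ dominates the bad contribution $\kappa\ell e^{M_Tt}\int\rho_0^{1-2\gamma}|\Delta\rho_0^{\gamma-1}|(\theta_\ell)_+\,dx$ and the transport weight term. The quadratic sources $\mathcal{Q}(\nabla u)+\nu|\curl b|^2$ and the pressure term $\widetilde R\rho\theta\divv u$---decomposed on $\{(\theta_\ell)_+>0\}$ as $\theta=(\theta_\ell)_+ +\ell e^{M_Tt}\rho_0^{\gamma-1}$---are handled via Cauchy--Schwarz and the singularly weighted bounds in Lemmas \ref{l22}--\ref{l27}, producing a $\phi(t)$-type multiplier of $\mathcal{Z}_\ell$ plus an $O(\ell^2)$ remainder. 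Gronwall's inequality, together with the vanishing initial datum, then gives $\mathcal{Z}_\ell\le C(1+\ell^2)$.

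For the iteration bound, the key geometric observation is that when $\ell>m\ge \overline{S}_0$ the inclusion $\{(\theta_\ell)_+>0\}\subset\{(\theta_m)_+\ge (\ell-m)e^{M_Tt}\rho_0^{\gamma-1}\}$ holds, so the weighted level-set measure satisfies
\begin{equation*}
\int_{\{(\theta_\ell)_+>0\}}\rho_0^{2(1-\gamma)}\,dx
\le \frac{1}{(\ell-m)^2 e^{2M_Tt}}\int\rho_0^{2(1-\gamma)}(\theta_m)_+^2\,dx
\le \frac{C\,\mathcal{Z}_m}{(\ell-m)^2}.
\end{equation*}
Redoing the energy identity but writing every linear-in-$(\theta_\ell)_+$ source term as $\ell\cdot 1_{\{(\theta_\ell)_+>0\}}\cdot(\theta_\ell)_+$ and closing it against itself via Young's inequality, combined with the weighted Sobolev embedding $\|\rho_0^{1-\gamma}(\theta_\ell)_+\|_{L^6}\le C\|\rho_0^{1/2-\gamma}\nabla(\theta_\ell)_+\|_{L^2}+C\|\rho_0^{1-\gamma}(\theta_\ell)_+\|_{L^2}$ and an $L^\infty_tL^2_x$--$L^2_tL^6_x$ interpolation of $(\theta_\ell)_+$, yields the desired nonlinear recursion $\mathcal{Z}_\ell\le C\ell^2(\ell-m)^{-3}\mathcal{Z}_m^{3/2}$.

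The main obstacle is the sharp $\ell$-power accounting in the iteration step: to obtain exactly $\ell^2$ (rather than $\ell^4$) in the numerator, every linear-in-$\ell$ source must be paired with $1_{\{(\theta_\ell)_+>0\}}$ via Cauchy--Schwarz in the tightest possible way. This is particularly delicate for the pressure term, since the decomposition $\theta=(\theta_\ell)_+ +\ell e^{M_Tt}\rho_0^{\gamma-1}$ introduces an additional factor of $\ell$ that must be absorbed, and for the second-order weight term $\kappa\ell e^{M_Tt}\Delta\rho_0^{\gamma-1}$, where the precise exponent matching needed to preserve the weight $\rho_0^{1-2\gamma}$ on $|\nabla(\theta_\ell)_+|^2$ is tight. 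A secondary technical point is verifying that the $\phi(t)$-type multipliers arising in Gronwall lie in $L^1(0,T)$, which follows from $\Phi_T<\infty$ and Lemmas \ref{l22}--\ref{l27}.
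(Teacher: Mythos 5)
Your overall strategy---multiply $\eqref{a1}_3$ rewritten for $\theta_\ell$ by $\rho_0^{1-2\gamma}(\theta_\ell)_+$, use the definition of $M_T$ together with \eqref{h1}, \eqref{h2}, and Lemma \ref{l21} to absorb the bad $\kappa\ell e^{M_Tt}\Delta\rho_0^{\gamma-1}$ contribution, exploit $(\theta_\ell)_+|_{t=0}=0$ for $\ell\ge\overline S_0$, and then perform a level-set comparison against $\theta_m^+$---is exactly the paper's plan, and your derived PDE for $\theta_\ell$ and your handling of the first estimate $\mathcal Z_\ell\le C(1+\ell^2)$ are in line with the paper's $\bar I_1,\ldots,\bar I_7$ accounting. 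However, there is a genuine gap in your iteration step: the weighted level-set bound you wrote down,
\[
\int_{\{(\theta_\ell)_+>0\}}\rho_0^{2(1-\gamma)}\,dx\le\frac{C\,\mathcal Z_m}{(\ell-m)^2},
\]
is obtained by squaring the ratio $\rho_0^{1-\gamma}\theta_m^+/[(\ell-m)e^{M_Tt}]\ge 1$, and it yields a recursion that is merely \emph{linear} in $\mathcal Z_m$ (and has only $(\ell-m)^{-2}$). The De Giorgi lemma in the appendix requires the superlinear exponent $\sigma>1$; a linear bound does not close the iteration no matter how you play with the $\phi(t)$ multipliers or the Sobolev embedding afterward, because the superlinearity must already be present in the measure estimate. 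The paper instead raises the same ratio to the \emph{third} power, getting
\[
\int_{\{\theta_\ell>0\}}1\,dx\le\frac{C}{(\ell-m)^3}\,\|\rho_0^{1-\gamma}\theta_m^+\|_{L^3}^3
\le\frac{C}{(\ell-m)^3}\,\|\rho_0^{1-\gamma}\theta_m^+\|_{L^2}^{3/2}\,\|\nabla(\rho_0^{1-\gamma}\theta_m^+)\|_{L^2}^{3/2},
\]
and it is the interpolation of $L^3$ between $L^2$ and the Sobolev-controlled $L^6$ that produces the crucial $\mathcal Z_m^{3/2}$; time-integrating the source $\ell^2\int_{\{\theta_\ell>0\}}1\,dx\,dt$ then gives $\ell^2(\ell-m)^{-3}\mathcal Z_m^{3/2}$. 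Your closing sentence names the correct target recursion, but the concrete inequality you supply cannot reach it; you should replace the squared comparison by the cubed comparison and then invoke Gagliardo--Nirenberg as above, exactly as in \eqref{3.8}. A secondary, minor point: you also need the boundedness $\rho_0\in L^\infty$ (from $W^{1,q}\hookrightarrow L^\infty$, $q>3$) to pass freely between weights such as $\rho_0^{1-\gamma}$ and $\rho_0^{1/2-\gamma}$ in the weighted Sobolev step, which you use but do not state.
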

\begin{proof}[Proof]
It follows from $\eqref{a1}_3$ that
\begin{align}\label{3.4}
c_v\rho(\partial_t\theta_\ell+u\cdot\nabla \theta_\ell)-\kappa\Delta\theta_\ell
&=-c_v\ell e^{M_Tt}\rho u\cdot\nabla\rho_0^{\gamma-1}-\widetilde{R}\rho\big(\theta_\ell+\ell e^{M_Tt}\rho_0^{\gamma-1}\big)
\divv u\nonumber\\
&+\ell e^{M_Tt}\big(\kappa\Delta\rho_0^{\gamma-1}-c_vM_T\rho_0^{\gamma-1}\rho\big)+\mathcal Q(\nabla u)
+\nu|\curl b|^2.
\end{align}
Multiplying \eqref{3.4} by $\rho_0^{1-2\gamma}(\theta_\ell)_+$ and integration by parts, one has that
\begin{align}\label{3.5}
&\frac{c_v}{2}\frac{d}{dt}\int\rho\rho_0^{1-2\gamma}(\theta_\ell)_+^2dx
+\kappa\int\rho_0^{1-2\gamma}|\nabla(\theta_\ell)_+|^2dx\nonumber\\
&=-\frac{c_v}{2}\int \rho u\cdot\nabla\rho_0^{1-2\gamma}(\theta_\ell)_+^2dx
-\kappa\int \nabla(\theta_\ell)_+\cdot\nabla\rho_0^{1-2\gamma}(\theta_\ell)_+dx\nonumber\\
&\quad-c_v\ell e^{M_Tt}\int\rho u\cdot\nabla\rho_0^{\gamma-1}\rho_0^{1-2\gamma}(\theta_\ell)_+dx
-\widetilde{R}\int\rho\theta_\ell\divv u\rho_0^{1-2\gamma}(\theta_\ell)_+dx\nonumber\\
&\quad-\widetilde{R}\ell e^{M_Tt}\int\rho \rho_0^{-\gamma}\divv u(\theta_\ell)_+dx
+\ell e^{M_Tt}\int(\kappa\Delta\rho_0^{\gamma-1}
-c_vM_T\rho_0^{\gamma-1}\rho)\rho_0^{1-2\gamma}(\theta_\ell)_+dx\nonumber\\
&\quad+\int\mathcal Q(\nabla u)\rho_0^{1-2\gamma}(\theta_\ell)_+dx
+\nu\int|\curl b|^2\rho_0^{1-2\gamma}(\theta_\ell)_+dx\nonumber\\
&\le -\frac{c_v}{2}\int \rho u\cdot\nabla\rho_0^{1-2\gamma}(\theta_\ell)_+^2dx
-\kappa\int \nabla(\theta_\ell)_+\cdot\nabla\rho_0^{1-2\gamma}(\theta_\ell)_+dx\nonumber\\
&\quad-c_v\ell e^{M_Tt}\int\rho u\cdot\nabla\rho_0^{\gamma-1}\rho_0^{1-2\gamma}(\theta_\ell)_+dx
-\widetilde{R}\int\rho\theta_\ell\divv u\rho_0^{1-2\gamma}(\theta_\ell)_+dx\nonumber\\
&\quad-\widetilde{R}\ell e^{M_Tt}\int\rho \rho_0^{-\gamma}\divv u(\theta_\ell)_+dx
+\int\mathcal Q(\nabla u)\rho_0^{1-2\gamma}(\theta_\ell)_+dx\nonumber\\
&\quad+\nu\int|\curl b|^2\rho_0^{1-2\gamma}(\theta_\ell)_+dx\triangleq\sum_{i=1}^7\bar{I}_i,
\end{align}
due to
\begin{align}
&\ell e^{M_Tt}\int\big(\kappa\Delta\rho_0^{\gamma-1}
-c_vM_T\rho_0^{\gamma-1}\rho\big)\rho_0^{1-2\gamma}(\theta_\ell)_+dx\nonumber\\
&=\ell e^{M_Tt}\int\rho_0^\gamma\Big(\kappa(\gamma-1)\frac{\Delta\rho_0}{\rho_0^2}+
\kappa(\gamma-1)(\gamma-2)\frac{|\nabla\rho_0|^2}{\rho_0^3}-c_vM_T\frac{\rho}{\rho_0}\Big)
\rho_0^{1-2\gamma}(\theta_\ell)_+dx\nonumber\\
&\le \ell e^{M_Tt}\int\rho_0^\gamma\Big(\kappa(\gamma-1)K_2+\kappa(\gamma-1)|\gamma-2|K_1^2
-c_vM_Te^{-C_*\Phi_T}\Big)
\rho_0^{1-2\gamma}(\theta_\ell)_+dx\nonumber\\
&=-\kappa(\gamma-1)\ell e^{M_Tt}\int\rho_0^\gamma K_1^2\rho_0^{1-2\gamma}(\theta_\ell)_+dx\le 0,
\end{align}
where we have used \eqref{h1}, \eqref{h2}, \eqref{3.1}, and Lemma \ref{l21}.

It remains to estimate $\bar{I}_i\ (i=1, 2, \cdots, 7)$. We divide into the following two case:

(i) Using \eqref{h1}, Lemma \ref{l27}, Lemma \ref{l23}, H\"older's, Gagliardo-Nirenberg, and Young's inequalities, we derive
from the regularity of $(\rho, u, \theta, b)$ that
\begin{align*}
\sum_{i=1}^6\bar{I}_i&\le \frac{\kappa}{2}\|\rho_0^{\frac12-\gamma}\nabla(\theta_\ell)_+\|_{L^2}^2
+C\|\sqrt{\rho_0}u\|_{L^\infty}\|\sqrt{\rho}\rho_0^{\frac12-\gamma}(\theta_\ell)_+\|_{L^2}^2
\nonumber\\
&\quad+C\ell\|\sqrt{\rho_0}u\|_{L^2}\|\rho_0^{1-\gamma}(\theta_\ell)_+\|_{L^2}
+C\|\nabla u\|_{L^\infty}\|\rho_0^{1-\gamma}(\theta_\ell)_+\|_{L^2}^2\nonumber\\[3pt]
&\quad+C\ell\|\nabla u\|_{L^2}\|\rho_0^{1-\gamma}(\theta_\ell)_+\|_{L^2}+C\|\rho_0^{-\frac{\gamma}{2}}\nabla u\|_{L^4}^4
\|\rho_0^{1-\gamma}(\theta_\ell)_+\|_{L^2}\nonumber\\[3pt]
&\quad+C\|\rho_0^{-\frac{\gamma}{2}}\nabla b\|_{L^4}^4
\|\rho_0^{1-\gamma}(\theta_\ell)_+\|_{L^2}+C\|\rho_0^{1-\gamma}(\theta_\ell)_+\|_{L^2}^2\nonumber\\
&\le \frac{\kappa}{2}\|\rho_0^{\frac12-\gamma}\nabla(\theta_\ell)_+\|_{L^2}^2
+C\phi(t)\|\rho_0^{1-\gamma}(\theta_\ell)_+\|_{L^2}^2+C\|\rho_0^{-\frac{\gamma}{2}}\nabla u\|_{L^2}^2+C\|\rho_0^{-\frac{\gamma}{2}}\nabla u\|_{L^6}^6\nonumber\\[3pt]
&\quad+C\|\rho_0^{-\frac{\gamma}{2}}\nabla b\|_{L^6}^6+C\|\rho_0^{-\frac{\gamma}{2}}\nabla b\|_{L^2}^2+C\ell^2\big(\|\sqrt{\rho_0}u\|_{L^2}^2+\|\nabla u\|_{L^2}^2\big)\nonumber\\
&\le \frac{\kappa}{2}\|\rho_0^{\frac12-\gamma}\nabla(\theta_\ell)_+\|_{L^2}^2
+C\phi(t)\|\rho_0^{1-\gamma}(\theta_\ell)_+\|_{L^2}^2
+C(\ell^2+1),
\end{align*}
from which, and \eqref{3.5}, one has
\begin{align*}
c_v\frac{d}{dt}\|\rho_0^{1-\gamma}(\theta_\ell)_+\|_{L^2}^2
+\kappa\|\rho_0^{\frac12-\gamma}\nabla(\theta_\ell)_+\|_{L^2}^2\le C\phi(t)\|\rho_0^{1-\gamma}(\theta_\ell)_+\|_{L^2}^2
+C(\ell^2+1).
\end{align*}
This together with Gronwall's inequality, and using Lemma \ref{l21}, and noticing that
$(\theta_\ell)_+|_{t=0}=0$, for any $\ell\ge \overline{S}_0$, the first conclusion follows.

(ii) By \eqref{h1}, \eqref{h2}, Lemma \ref{l21}, and Lemma \ref{l27}, we obtain that
\begin{align*}
\bar{I}_1+\bar{I}_2+\bar{I}_4&\le \frac{\kappa}{4}\|\rho_0^{\frac12-\gamma}\nabla(\theta_\ell)_+\|_{L^2}^2
+C(\|\sqrt{\rho_0}u\|_{L^\infty}+\|\nabla u\|_{L^\infty}+1)
\|\sqrt{\rho}\rho_0^{\frac12-\gamma}(\theta_\ell)_+\|_{L^2}^2\nonumber\\
&\le \frac{\kappa}{4}\|\rho_0^{\frac12-\gamma}\nabla(\theta_\ell)_+\|_{L^2}^2+C\phi(t)
\|\sqrt{\rho}\rho_0^{\frac12-\gamma}(\theta_\ell)_+\|_{L^2}^2,\\
\bar{I}_3+\bar{I}_5&\le C\ell(\|\sqrt{\rho_0}u\|_{L^\infty}
+\|\nabla u\|_{L^\infty})\int\rho_0^{1-\gamma}(\theta_\ell)_+^2dx\nonumber\\
&\le C\ell^2\int_{\{\theta_\ell>0\}}1 dx+C\phi(t)\|\rho_0^{1-\gamma}(\theta_\ell)_+\|_{L^2}^2,\\
\bar{I}_6+\bar{I}_7&\le C(\|\rho_0^{-\frac{\gamma}{2}}\nabla u\|_{L^6}^2+\|\rho_0^{-\frac{\gamma}{2}}\nabla b\|_{L^6}^2)
\|\rho_0^{1-\gamma}(\theta_\ell)_+\|_{L^6}\Big(\int_{\{\theta_\ell>0\}}1 dx\Big)^\frac12\nonumber\\
&\le C(\|\rho_0^{1-\gamma}\nabla(\theta_\ell)_+\|_{L^2}+\|\nabla\rho_0^{1-\gamma}(\theta_\ell)_+\|_{L^2})
\Big(\int_{\{\theta_\ell>0\}}1 dx\Big)^\frac12\nonumber\\
&\le C(\|\rho_0^{1-\gamma}\nabla(\theta_\ell)_+\|_{L^2}+\|\rho_0^{\frac32-\gamma}(\theta_\ell)_+\|_{L^2})
\Big(\int_{\{\theta_\ell>0\}}1 dx\Big)^\frac12\nonumber\\
&\le \frac{\kappa}{4}\|\rho_0^{\frac12-\gamma}\nabla(\theta_\ell)_+\|_{L^2}^2
+C\|\rho_0^{1-\gamma}(\theta_\ell)_+\|_{L^2}^2+C\ell^2\int_{\{\theta_\ell>0\}}1 dx.
\end{align*}
Substituting the above estimates on $\bar{I}_i\ (i=1, 2,\cdots, 7)$ into \eqref{3.5}, we arrive at
\begin{align}\label{3.7}
c_v\frac{d}{dt}\|\rho_0^{1-\gamma}(\theta_\ell)_+\|_{L^2}^2
+\kappa\|\rho_0^{\frac12-\gamma}\nabla(\theta_\ell)_+\|_{L^2}^2\le
C\phi(t)\|\rho_0^{1-\gamma}(\theta_\ell)_+\|_{L^2}^2+C\ell^2\int_{\{\theta_\ell>0\}}1 dx.
\end{align}
Since $M_T>0$, one can check that for any $\ell>m$, it holds that
\begin{align*}
1\le e^{-M_Tt}\rho_0^{1-\gamma}\frac{\theta_m^+}{\ell-m}\le \rho_0^{1-\gamma}\frac{\theta_m^+}{\ell-m},
\quad\text{on}~\{\theta_\ell>0\}\subseteq\{\theta_m>0\},
\end{align*}
which together with \eqref{h1} and Gagliardo-Nirenberg inequality gives that
\begin{align}\label{3.8}
\int_{\{\theta_\ell>0\}}1 dx&\le
\int_{\{\theta_\ell>0\}}\Big|\frac{\rho_0^{1-\gamma}\theta_m^+}{\ell-m}\Big|^3dx
\le \frac{C}{(\ell-m)^3}\|\rho^{1-\gamma}\theta_m^+\|_{L^2}^\frac32
\|\nabla(\rho_0^{1-\gamma}\theta_m^+)\|_{L^2}^\frac32\nonumber\\
&\le \frac{C}{(\ell-m)^3}\|\rho^{1-\gamma}\theta_m^+\|_{L^2}^\frac32
\big(\|\rho_0^{1-\gamma}\nabla\theta_m^+\|_{L^2}
+\|\nabla\rho_0^{1-\gamma}\theta_m^+\|_{L^2}\big)^\frac32\nonumber\\
&\le \frac{C}{(\ell-m)^3}\|\rho^{1-\gamma}\theta_m^+\|_{L^2}^\frac32
\big(\|\rho_0^{1-\gamma}\nabla\theta_m^+\|_{L^2}
+\|\rho_0^{1-\gamma}\theta_m^+\|_{L^2}\big)^\frac32.
\end{align}
Thanks to \eqref{3.7} and \eqref{3.8}, one has that
\begin{align*}
&c_v\frac{d}{dt}\|\rho_0^{1-\gamma}(\theta_\ell)_+\|_{L^2}^2
+\kappa\|\rho_0^{\frac12-\gamma}\nabla(\theta_\ell)_+\|_{L^2}^2\nonumber\\
&\le\frac{C}{(\ell-m)^3}\|\rho^{1-\gamma}\theta_m^+\|_{L^2}^\frac32(\|\rho_0^{1-\gamma}\nabla\theta_m^+\|_{L^2}
+\|\rho_0^{1-\gamma}\theta_m^+\|_{L^2})^\frac32+
C\phi(t)\|\rho_0^{1-\gamma}(\theta_\ell)_+\|_{L^2}^2,
\end{align*}
from which, applying Gronwall's inequality, Lemma \ref{l21}, and noticing that
$(\theta_\ell)_+|_{t=0}=0$, for any $\ell\ge \overline{S}_0$, and the following fact
\begin{align*}
&\int_0^T\|\rho^{1-\gamma}\theta_m^+\|_{L^2}^\frac32\big(\|\rho_0^{1-\gamma}\nabla\theta_m^+\|_{L^2}
+\|\rho_0^{1-\gamma}\theta_m^+\|_{L^2}\big)^\frac32dt\nonumber\\
&\le C\int_0^T\big(\|\rho^{1-\gamma}\theta_m^+\|_{L^2}^2\big)^\frac32dt
+\int_0^T\|\rho^{1-\gamma}\theta_m^+\|_{L^2}^\frac32\|\rho_0^{1-\gamma}\nabla\theta_m^+\|_{L^2}^\frac32dt\nonumber\\
&\le C\Big(\sup_{0\le t\le T}\|\rho^{1-\gamma}\theta_m^+\|_{L^2}^2\Big)^\frac32
+C\Big(\int_0^T\|\rho^{1-\gamma}\theta_m^+\|_{L^2}^6dt\Big)^\frac14\Big(\int_0^T\|\rho_0^{1-\gamma}\nabla\theta_m^+\|_{L^2}^2dt\Big)^\frac34\nonumber\\
&\le C\Big(\sup_{0\le t\le T}\|\rho^{1-\gamma}\theta_m^+\|_{L^2}^2\Big)^\frac32
+C\Big(\int_0^T\|\rho_0^{1-\gamma}\nabla\theta_m^+\|_{L^2}^2dt\Big)^\frac32
\le C\mathcal Z_m^\frac32.
\end{align*}
Thus, the second conclusion follows.
\end{proof}

\section{Proof of Theorem \ref{thm1}}\label{sec4}

Combining Lemmas \ref{l21}--\ref{l27} and Lemmas \ref{3.1}--\ref{3.2} altogether, and using the similar arguments of Li and Xin \cite{LX23}, it is not difficult to get the uniform lower and upper boundedness of the entropy by a modified De Giorgi-type iteration (see Lemma \ref{l41}). Here we omit the details for simplicity.
\hfill $\Box$

\section*{Appendix}
In this appendix, we state the following De Giorgi type iterative lemma (cf. \cite{LX23}), which is important to derive the uniform lower and upper bounds of the entropy.
\begin{lemma}\label{l41}
Let $m_0\in [0, \infty)$ be given and $f$ be a nonnegative non-increasing function on $[m_0, \infty)$ satisfying
\begin{align*}
f(\ell)\le \frac{M_0(\ell+1)^\alpha}{(\ell-m)^\beta}f^\sigma(m), \quad \forall\ell>m\ge m_0,
\end{align*}
for some nonnegative constants $M_0$, $\alpha$, $\beta$, and $\sigma$, with $0\le\alpha<\beta$ and $\sigma>1$. Then,
\begin{align*}
f(m_0+d)=0,
\end{align*}
where
\begin{align*}
d=\Big[2f^\sigma(m_0)(m_0+M_0+2)^{\frac{2\alpha+2\beta+1}{\sigma-1}+\frac{\beta}{(\sigma-1)^2}+2\alpha+\beta+1}\Big]^\frac{1}{\beta-\alpha}
+2.
\end{align*}
\end{lemma}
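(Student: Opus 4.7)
The proof is a classical dyadic De Giorgi iteration; what has to be done is the quantitative bookkeeping that turns the abstract smallness threshold into the explicit $d$ written in the statement. My plan is to construct an increasing sequence $\ell_k\nearrow m_0+d$, derive a self-improving recursion for $y_k:=f(\ell_k)$ from the hypothesis, show $y_k\to 0$ by induction, and finally invoke the monotonicity of $f$ to conclude $f(m_0+d)\le\lim_k y_k=0$.

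Concretely, I set $\ell_k:=m_0+d(1-2^{-k})$, so $\ell_0=m_0$ and $\ell_{k+1}-\ell_k=d\cdot 2^{-(k+1)}$. Applying the hypothesis with $\ell=\ell_{k+1}$ and $m=\ell_k$, and using the crude bound $\ell_{k+1}+1\le m_0+d+1$, gives
\begin{align*}
y_{k+1}\le A\,B^{\,k}\,y_k^{\sigma},\qquad A:=\frac{M_0(m_0+d+1)^{\alpha}\,2^{\beta}}{d^{\beta}},\quad B:=2^{\beta}.
\end{align*}
I will then invoke the standard geometric iteration lemma: if $y_0\le A^{-1/(\sigma-1)}B^{-1/(\sigma-1)^2}$, a one-line induction yields $y_k\le y_0\,B^{-k/(\sigma-1)}\to 0$. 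The inductive step reduces, by direct computation, to exactly the assumed smallness of $y_0$, so no auxiliary smallness on the later $y_k$ needs to be imposed.

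It remains to check that the explicit $d$ in the statement actually enforces the smallness condition on $y_0=f(m_0)$. Rearranging $A B^{1/(\sigma-1)}f(m_0)^{\sigma-1}\le 1$ gives the equivalent requirement
\begin{align*}
d^{\beta}\ge M_0(m_0+d+1)^{\alpha}\,2^{\beta+\beta/(\sigma-1)}\,f(m_0)^{\sigma-1}.
\end{align*}
Setting $M:=m_0+M_0+2\ge 2$ and using the bound $d\ge 2$ built into the definition, I estimate $m_0+d+1\le Md$, which reduces the requirement to $d^{\beta-\alpha}\ge M_0 M^{\alpha}\,2^{\beta+\beta/(\sigma-1)}f(m_0)^{\sigma-1}$. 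A short case split on whether $f(m_0)\ge 1$ or $f(m_0)<1$ lets me replace $f(m_0)^{\sigma-1}$ by $f(m_0)^{\sigma}$ at the cost of an extra $M^{O(1)}$ factor, and the constants $2^{\beta+\beta/(\sigma-1)}$, $M_0$, etc., are absorbed into powers of $M$. The large exponent $E:=\frac{2\alpha+2\beta+1}{\sigma-1}+\frac{\beta}{(\sigma-1)^2}+2\alpha+\beta+1$ appearing in the statement is precisely what is needed to swallow all the resulting polynomial powers of $M$ together with the factors $2^{\beta/(\sigma-1)}$ and $2^{\beta/(\sigma-1)^2}$; hence $(d-2)^{\beta-\alpha}=2f^{\sigma}(m_0)M^{E}$ guarantees the displayed inequality.

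The main obstacle is not conceptual but combinatorial: the sharp threshold $A^{-1/(\sigma-1)}B^{-1/(\sigma-1)^2}$ coming out of the induction contains nested negative powers of $\sigma-1$, and these must be matched against the $\frac{1}{\sigma-1}$ and $\frac{1}{(\sigma-1)^2}$ terms inside $E$. The additive $+2$ in the definition of $d$ is the small technical convenience that secures $d\ge 2$ and hence the uniform bound $m_0+d+1\le Md$, allowing the estimate to be closed in a single step rather than through a further case analysis on the size of $d$.
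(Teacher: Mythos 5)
The paper does not actually prove Lemma \ref{l41}; it states it and cites Li--Xin \cite{LX23}, so there is no in-paper argument to compare against. Your dyadic iteration $\ell_k=m_0+d(1-2^{-k})$, the recursion $y_{k+1}\le AB^k y_k^{\sigma}$, the standard geometric-convergence lemma with threshold $y_0\le A^{-1/(\sigma-1)}B^{-1/(\sigma-1)^2}$, and the reduction (via $m_0+d+1\le Md$ for $d\ge 2$, $M=m_0+M_0+2$) to the requirement $d^{\beta-\alpha}\ge M_0M^{\alpha}2^{\beta+\beta/(\sigma-1)}f(m_0)^{\sigma-1}$ are all correct and are the expected route.

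The one genuine flaw is in the last verification step. Your case split at $f(m_0)=1$ does not close: when $f(m_0)<1$ you have $f(m_0)^{\sigma-1}=f(m_0)^{\sigma}/f(m_0)$, and $1/f(m_0)$ cannot be bounded by any power of $M$, so "replacing $f^{\sigma-1}$ by $f^{\sigma}$ at the cost of $M^{O(1)}$" fails precisely when $f(m_0)$ is small. The repair is to split instead at the constant threshold $c_0:=\bigl(M_0M^{\alpha}2^{\alpha+\beta/(\sigma-1)}\bigr)^{-1}$ for $f(m_0)^{\sigma-1}$: if $f(m_0)^{\sigma-1}\le c_0$, then the additive $+2$ alone gives $d^{\beta-\alpha}\ge 2^{\beta-\alpha}\ge M_0M^{\alpha}2^{\beta+\beta/(\sigma-1)}f(m_0)^{\sigma-1}$, while if $f(m_0)^{\sigma-1}>c_0$ then $1/f(m_0)\le c_0^{-1/(\sigma-1)}$ is a fixed constant which, together with $M_0\le M$ and $2\le M$, is absorbed into $M^{E}$ (one checks $1+\alpha+\beta+\frac{\beta+1+2\alpha}{\sigma-1}+\frac{\beta}{(\sigma-1)^2}\le E$). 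Equivalently, the weighted AM--GM bound $(a+2)^{\beta-\alpha}\ge a^{(\beta-\alpha)(\sigma-1)/\sigma}2^{(\beta-\alpha)/\sigma}$ applied to $a=(2f^{\sigma}(m_0)M^{E})^{1/(\beta-\alpha)}$ avoids the split altogether. Note also that this shows the $+2$ is not merely a convenience for $m_0+d+1\le Md$; it is essential for the regime of small $f(m_0)$, where $(d-2)^{\beta-\alpha}=2f^{\sigma}(m_0)M^{E}$ by itself would be too small.
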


\end{document}